\newtheorem*{theorem*}{Theorem}
\newtheorem{theorem}{Theorem}[section]
\newtheorem{lemma}[theorem]{Lemma}
\newtheorem{corollary}[theorem]{Corollary}
\newtheorem{remark}[theorem]{Remark}
\newtheorem{proposition}[theorem]{Proposition}
\def\del{\partial}
\def\dbar{\bar\partial}
\def\ddbar{\del\dbar}
\def\del{\partial}
\def\o{\omega}
\def\beq{\begin{equation}}
\def\eeq{\end{equation}}
\title{The Mabuchi geometry of low energy classes}
\author{Tam\'as Darvas}
\date{\emph{\small{to the memory of Gabriela Kohr (1967-2020)}}}
\begin{document}
\maketitle
\begin{abstract}Let $(X,\omega)$ be a K\"ahler manifold and $\psi: \Bbb R \to \Bbb R_+$ be a concave weight. We show that $\mathcal H_\omega$ admits a natural metric $d_\psi$ whose completion is the low energy space $\mathcal E_\psi$, introduced by Guedj-Zeriahi. As $d_\psi$ is not induced by a Finsler metric, the main difficulty is to show that the triangle inequality holds. We study properties of the resulting complete metric space $(\mathcal E_\psi,d_\psi)$.
\end{abstract}

\section{Introduction}

Let $(X,\omega)$ be a compact connected K\"ahler manifold. A basic problem in K\"ahler geometry is to find various canonical metrics among the K\"ahler metrics $\omega'$ that are in the same de Rham cohomology class as $\omega$ \cite{Ca,Ya}. Due to Hodge theory, such $\omega'$ can be written as $\omega' = \omega + i\ddbar u$, where $u$ is a smooth function from the space of K\"ahler potentials:
$$\mathcal H_\omega := \{v \in C^\infty(X) \ : \ \omega_v:=\omega + i\ddbar v > 0\}.$$
When studying weak notions of K\"ahler metrics, or degenerations of smooth ones, a natural space to consider is the space of $\omega$-plurisubharmonic ($\omega$-psh) functions $\textup{PSH}(X,\omega)$. With slight abuse of precision, we say that $v: X \to [-\infty,\infty)$ is $\omega$-psh if it is usc, integrable and 
$\omega_v:=\omega + i\ddbar v \geq 0$ in the sense of currents.

As pointed out in a series of works by Guedj-Zeriahi, and their collaborators \cite{GZ07,BEGZ10,BBGZ13,EGZ09} the space of full mass potentials $\mathcal E = \{v \in \textup{PSH}(X,\omega) \ :  \ \int_X \omega_v^n = \int_X \omega^n\}$ has a prominent role in the study of weak solutions to complex Monge-Amp\`ere equations with measure theoretic right hand side. Here $
\omega_v^n$ is the non-pluripolar complex Monge-Ampere measure of $v$, extending the interpetation of Bedford-Taylor and Cegrell from the local case \cite{BT76,Ce98}. To study $\mathcal E$, it is feasible to consider various weights $\phi: \Bbb R \to \Bbb R^+$ and consider the subspaces with finite $\phi$-energy:
$$\mathcal E_\phi := \{v \in \mathcal E \ | \ E_\phi(v):=\int_X \phi(v)\omega_v^n < \infty\}.$$
One important point is that $\mathcal E$ can be exhausted by a special class of finite energy subspaces \cite[Proposition 2.2]{GZ07}:
\begin{equation}\label{eq: E_union}
\mathcal E = \bigcup_{\psi \in \mathcal W^-} \mathcal E_\psi.
\end{equation}
Here $\mathcal W^-$ is the space of weights $\phi : [-\infty,\infty] \to [0,\infty]$ that are even and continuous on $\Bbb R$, in addition to being smooth, concave, and strictly increasing on $(0,\infty)$, normalized by $\psi(0) =0$ and $\psi(\pm \infty) = \infty$. Following terminology of \cite{GZ07}, we will call elements of $\mathcal W^-$ \emph{concave weights}, and the spaces $\mathcal E_\psi$ \emph{low energy classes} (see Remark \ref{rem: no smooth} for superficial differences in our discussion compared to \cite{GZ07}).

As noticed in \cite[Section 2]{BBEGZ11}, the subspace $\mathcal E_1 \:= \{ v \in \mathcal E \ : \ \int_X |v| \omega_v^n <\infty\}$ has a complete metric topology. This was refined further in \cite{Da15}, where it was noticed that the more general \emph{high energy classes} $\mathcal E_\chi$ are the metric completions of an appropriately defined Orlicz-Finsler metric structure on the smooth space $\mathcal H_\omega$. Recall that high energy classes $\mathcal E_\chi$ are given by weights $\chi: \Bbb R \to \Bbb R^+$, that are even convex functions satisfying $\chi(0)=0, \chi'(1)=1$ and a growth estiamte $t\chi'(t) \leq p \chi(t)$ (notation: $\chi \in \mathcal W^+_p$).

Moreover, in \cite{Da15} it was also pointed out the the resulting metric spaces $(\mathcal E_\chi,d_\chi)$ admit geodesic segments connecting arbitrary points. This latter fact had a wide range of applications, including energy properness \cite{CC1,CC2,BDL20,DR17}, K-stability \cite{BBJ15}, convergence and existence the weak Calabi flow \cite{Str, BDL17}, etc.

Unfortunately, for all $\chi \in \mathcal W^+_p$ we have the inclusions $\mathcal E_\chi \subseteq \mathcal E_1 \subsetneq \mathcal E$. As a result, it is natural to ask if subspaces of $\mathcal E$ not included in $\mathcal E_1$ can be naturally topologized/geometrized as well. One may even ask: does $\mathcal E$ admit a natural topology/geometry? Revisiting \eqref{eq: E_union}, one is tempted to first find a natural metric topology on the low energy spaces $\mathcal E_\psi$, as these exhaust $\mathcal E$. This is what we accomplish in this paper.

Not much is known about the metric geometry of low energy spaces, despite their wast array of applications to weak solutions of complex Monge-Amp\`ere equations \cite{BEGZ10,GZ07}. The only related result seems to be \cite[Theorem 1.6]{GLZ17}, implying existence of a metrizable uniform space topology on $\mathcal E_p := \{u \in \mathcal E, \ \int_X |u|^p \omega_u^n<\infty\}, \ p \in(0,1)$.

To start, let $\mathcal H_\omega^\Delta := \textup{PSH}(X,\omega) \cap C^{1,\bar 1}$, where by $C^{1,\bar 1}$ we denote functions on $X$ with bounded mixed second partial derivatives. Equivalently, $\mathcal H_\omega^\Delta$ is the space of $\omega$-psh potentials with bounded Laplacian. Given $u_0,u_1 \in \mathcal H_\omega$, let $[0,1] \ni t \to u_t \in \mathcal H_\omega^\Delta$ be Chen's weak geodesic joining $u_0,u_1$ \cite{Ch00}. We introduce the following candidate metric on $\mathcal H_\omega$:
\begin{equation}\label{eq: d_psi_def_main}
d_\psi(u_0,u_1) := \int_X \psi(\dot u_0) \omega_{u_0}^n.
\end{equation}
The above definition of $d_\psi$ bears superficial similarities with the one in \cite[(2)]{Da15},  dealing with the case of high energy classes. However it is not difficult to see that $d_\psi$ is not induced by the length metric of a Finsler structure, contrasting with \cite{Da15}. Thus one has to work hard to prove the triangle inequality, this being the first main result of this paper:

\begin{theorem}\label{thm: d_psi_metric_main} $(\mathcal H_\omega,d_\psi)$ is a metric space.
\end{theorem}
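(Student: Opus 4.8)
My plan is to prove the three metric axioms in turn, isolating the triangle inequality as the genuine difficulty. The decisive preliminary, which drives both symmetry and the final estimate, is an \emph{invariance of the velocity distribution} along geodesics: for Chen's $C^{1,\bar1}$ geodesic $[0,1]\ni t\mapsto u_t$ joining $u_0,u_1\in\mathcal H_\omega$ and for any bounded Borel $g:\RR\to\RR$, the quantity $\int_X g(\dot u_t)\,\omega_{u_t}^n$ should be independent of $t$. For a smooth geodesic this is a short computation: differentiating in $t$, using $\tfrac{d}{dt}\omega_{u_t}^n=(\Delta_{\omega_{u_t}}\dot u_t)\,\omega_{u_t}^n$ with $\Delta_{\omega_{u_t}}\dot u_t=n\,\i\ddbar\dot u_t\w\omega_{u_t}^{n-1}/\omega_{u_t}^n$ together with the geodesic equation $\ddot u_t=|\nabla\dot u_t|^2_{\omega_{u_t}}$, the term $\int_X g'(\dot u_t)|\nabla\dot u_t|^2_{\omega_{u_t}}\omega_{u_t}^n$ coming from the second time-derivative cancels against $\int_X g(\dot u_t)\,\Delta_{\omega_{u_t}}\dot u_t\,\omega_{u_t}^n$ after an integration by parts (since $\nabla g(\dot u_t)=g'(\dot u_t)\nabla\dot u_t$). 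I would then upgrade to weak $C^{1,\bar1}$ geodesics through the $\eps$-regularized approximation of \cite{Ch00}, along which $\dot u^\eps_t\to\dot u_t$ and $\omega_{u^\eps_t}^n\to\omega_{u_t}^n$ in the sense needed to pass the identity to the limit. Note that $d_\psi$ is finite on $\mathcal H_\omega$ because the geodesic is $C^{1,\bar1}$, so $\dot u_0$ is bounded and $\psi$ continuous.

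Granting this invariance, symmetry follows: the reversed path $\tilde u_t:=u_{1-t}$ is the geodesic from $u_1$ to $u_0$ with $\dot{\tilde u}_0=-\dot u_1$, so $d_\psi(u_1,u_0)=\int_X\psi(-\dot u_1)\,\omega_{u_1}^n=\int_X\psi(\dot u_1)\,\omega_{u_1}^n$ by evenness of $\psi$, and this equals $\int_X\psi(\dot u_0)\,\omega_{u_0}^n=d_\psi(u_0,u_1)$ by the invariance. Nonnegativity is clear from $\psi\geq 0$. For nondegeneracy, $d_\psi(u_0,u_1)=0$ forces $\psi(\dot u_0)=0$ $\omega_{u_0}^n$-a.e.; since $\psi$ vanishes only at the origin, this gives $\dot u_0=0$ $\omega_{u_0}^n$-a.e., hence $\int_X|\dot u_0|\,\omega_{u_0}^n=0$. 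As this last integral is exactly $d_1(u_0,u_1)$ computed along the \emph{same} geodesic, and $d_1$ is a genuine metric \cite{BBEGZ11,Da15}, I conclude $u_0=u_1$.

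The triangle inequality is where the Finsler length structure of \cite{Da15} is unavailable, and I expect it to be the main obstacle. My plan is to reduce to elementary weights. Writing $c:=\psi'(\infty)\geq0$, any concave weight admits the integral representation $\psi(s)=c|s|+\int_0^\infty\min(|s|,r)\,d\mu(r)$, where $d\mu=-\psi''\,dr\geq0$ on $(0,\infty)$ (one checks this by integrating $\min(|s|,r)$ over the two regimes $r<|s|$ and $r>|s|$). By Tonelli and the fact that all three distances are computed from the \emph{same} three geodesics, this yields $d_\psi=c\,d_1+\int_0^\infty d_{\psi_r}\,d\mu(r)$ for the truncated weights $\psi_r(s):=\min(|s|,r)$. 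Since $d_1$ is already a metric and a superposition of functions obeying the triangle inequality again obeys it, it suffices to prove the triangle inequality for each $\psi_r$. The gain from truncation is twofold: $(s,s')\mapsto\min(|s-s'|,r)$ is itself a genuine metric on $\RR$, so $\psi_r$ is a \emph{bounded} transport cost, and the bound $\psi_r\leq r$ makes all limiting arguments uniformly dominated.

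For the truncated weight I would set up a comparison between the three velocity distributions. Denoting by $w_t$ the geodesic from $u_0$ to $u_2$ and using the invariance of Step~1 to evaluate each $d_{\psi_r}$ at whichever endpoint is convenient, the goal is to dominate $\int_X\psi_r(\dot w_0)\,\omega_{u_0}^n$ by the two legs through $u_1$ via a pointwise truncated-metric inequality, integrated against a common reference, together with a comparison of geodesic velocities governed by the rooftop operator $P(\cdot,\cdot)$ and modelled on the $d_1$-Pythagorean formula. The delicate feature is that the three geodesics carry three different Monge--Amp\`ere measures $\omega_{u_0}^n,\omega_{u_1}^n,\omega_{u_2}^n$, so the scalar inequality on $\RR$ cannot be integrated naively; I anticipate the cleanest route is to first establish the estimate for smooth endpoints, where every object is a genuine function and the velocity couplings are transparent, and then to pass to the general case by stability of weak geodesics and dominated convergence, legitimized precisely by $\psi_r\leq r$. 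This last transport-type step across distinct measures is the step I expect to require the most work.
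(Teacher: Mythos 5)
Your preliminaries are sound: the invariance of $t\mapsto\int_X g(\dot u_t)\,\omega_{u_t}^n$ along weak geodesics is essentially \cite[Lemma 4.10]{Da15}, which the paper also relies on; symmetry follows from it as you say; and your nondegeneracy argument (pass from $\psi(\dot u_0)=0$ a.e.\ $\omega_{u_0}^n$ to $d_1(u_0,u_1)=\int_X|\dot u_0|\,\omega_{u_0}^n=0$ and quote that $d_1$ is a metric) is a correct shortcut, where the paper instead uses the Pythagorean identity together with Dinew's domination principle. The integral representation $\psi(s)=c|s|+\int_0^\infty\min(|s|,r)\,d\mu(r)$, $d\mu=-\psi''\,dr$, is also correct, and by Tonelli it does split $d_\psi$ as $c\,d_1+\int_0^\infty d_{\psi_r}\,d\mu(r)$, reducing the triangle inequality to the truncated costs.

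The genuine gap is that the proof stops exactly where the theorem begins: the triangle inequality for $d_{\psi_r}$ is never proved. What you offer at that point is a plan (``dominate \dots via a pointwise truncated-metric inequality, integrated against a common reference''), and you yourself flag the ``transport-type step across distinct measures'' as open. Moreover, the reduction buys nothing on this core difficulty: by subadditivity of concave weights (Lemma \ref{lem: sublinear}), $(s,s')\mapsto\psi(|s-s'|)$ is \emph{already} a metric on $\Bbb R$ for every $\psi\in\mathcal W^-$, so the fact that $\psi_r$ is a bounded metric on $\Bbb R$ addresses a non-problem; the obstruction was never the scalar inequality but the fact that the velocities of the three geodesics are functions paired against three different Monge--Amp\`ere measures, with no pointwise relation and no Finsler length structure to glue them. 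The paper's resolution requires machinery with no counterpart in your outline: a second-order variation formula for the weak quasi-norms $\|\cdot\|_{\psi,u}$ along $\varepsilon$-geodesics (Proposition \ref{prop: OrliczNormSecondDiff}), Lempert's maximality of the geodesic $\psi$-length among increasing curves (Proposition \ref{prop: maxilengthgeod}), and the resulting concavity of $t\mapsto\hat d_\psi(\alpha_t,\gamma_t)$ along geodesics emanating from a common point (Corollary \ref{cor: concavity}); these give the triangle inequality only for \emph{ordered} triples $\alpha\geq\beta\geq\gamma$ (Proposition \ref{prop: concavity}), after which the general case is recovered via the Pythagorean identity, the $d_\psi$-contractivity of the rooftop operator $P(\cdot,\cdot)$, and monotonicity (Theorem \ref{thm: triangle_ineq}). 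Without a substitute for this concavity/ordering mechanism, your approach cannot be completed as stated.
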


Hoping for further analogies with the case of high energy classes \cite{Da15}, one might mistakenly expect that $(\mathcal H_\omega,d_\psi)$ is at least a length space, and the weak geodesic $t \to u_t$ appearing in \eqref{eq: d_psi_def_main} is a metric $d_\psi$-geodesic connecting $u_0,u_1$. This is unfortunately not the case either. In fact, when $\psi(t) = |t|^{\alpha}, \alpha \in (0,1)$, one can easily verify that the $d_\psi$-length of smooth curves inside $\mathcal H_\omega$ is always zero. In addition, this also confirms that $d_\psi$ can not be induced by a Finsler metric on $\mathcal H_\omega$.

In fact, the right analogy to follow here is the one coming from the case of toric K\"ahler manifolds $(X_T,\omega_T)$, and restricting $d_\psi$ to the torus invariant potentials $\mathcal H_\omega^T$. As is well known, the Legendre transform $\mathcal L$  transforms $\mathcal H_\omega^T$ bijectively into the space $Conv_\omega(P)$ of smooth convex functions on a Delzant polytope $P \subset \Bbb R^n$ with specific asymptotics near $\partial P$ (for details see \cite[Section 4]{Gu14} or \cite{CGSZ19}). By the same calculations as in \cite[Proposition 4.3]{Gu14}, we obtain that for $u_0,u_1 \in \mathcal H_\omega^T$ we have
$$d_\psi(u_0,u_1) = \int_P \psi(\mathcal L(u_0) - \mathcal L(u_1)) d\mu,$$
with $\mu$ being the Lebesque measure on $P$. By Lemma \ref{lem: sublinear} below we immediately see that in this case $d_\psi$ satisfies the triangle inequality trivially. 

In addition, the $d_\psi$-completion of $\mathcal H_\omega^T$ will be $L^\psi(P) \cap Conv(P)$, the space of convex functions on $P$ that have finite $\psi$-integral. This space is exactly the Legendre dual of $\mathcal E_\psi^T$, the set of torus invariant potentials in $\mathcal E_\psi$ (\cite[Proposition 4.5]{Gu14}). 

With the toric analogies in mind, our reader is perhaps less surprised by the statement of Theorem \ref{thm: d_psi_metric_main} above, and might also expect that the metric completion of $(\mathcal H_\omega,d_\psi)$ equals $\mathcal E_\psi$, even in the absence of toric symmetries. This is confirmed in our next main result.

\begin{theorem} \label{thm: EpsiComplete_main}  The metric $d_\psi$ extends to $\mathcal E_\psi$, making $(\mathcal E_\psi,d_\psi)$ a complete metric space, that is the metric completion of $(\mathcal H_\o,d_\psi)$. 
\end{theorem}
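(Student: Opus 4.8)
The plan is to identify $\mathcal E_\psi$ as the completion by extending $d_\psi$ one monotone limit at a time, and the technical core is a comparison estimate on $\mathcal H_\omega$. Concretely, I would first prove that there is $C=C(n,\psi)>0$ such that for all $u,v\in\mathcal H_\omega$,
\[
\tfrac1C\,\Theta_\psi(u,v)\ \le\ d_\psi(u,v)\ \le\ C\,\Theta_\psi(u,v),\qquad \Theta_\psi(u,v):=\int_X\psi\big(u-P(u,v)\big)\,\omega_u^n+\int_X\psi\big(v-P(u,v)\big)\,\omega_v^n,
\]
where $P(u,v):=\sup\{w\in\textup{PSH}(X,\omega):w\le\min(u,v)\}$ is the rooftop envelope; in the comparable case $v\le u$ this reduces to $d_\psi(u,v)\asymp\int_X\psi(u-v)\,\omega_u^n$. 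The upper bound is where the concavity of $\psi$ enters decisively: writing Chen's geodesic \eqref{eq: d_psi_def_main} and using that the Monge--Amp\`ere energy is affine along $C^{1,\bar1}$ geodesics, the velocity $\dot u_0$ is controlled pointwise by $u-P(u,v)$, and the subadditivity of $\psi$ (Lemma \ref{lem: sublinear}) converts these pointwise bounds into the integral estimate. The lower bound follows by a symmetric argument together with the comparison principle to pass between the measures $\omega_u^n$, $\omega_v^n$ and $\omega_{P(u,v)}^n$.

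With this estimate in hand, I would extend $d_\psi$ to $\mathcal E_\psi$ along decreasing sequences. Every $u\in\mathcal E_\psi$ is a decreasing limit of potentials $u_j\in\mathcal H_\omega$ (decreasing regularization of $\omega$-psh functions on a compact K\"ahler manifold, followed by a small perturbation to make the approximants strictly positive). For such $u_j\searrow u$ the comparison gives $d_\psi(u_j,u_k)\lesssim\int_X\psi(u_j-u)\,\omega_{u_j}^n$ for $k\ge j$, and the right-hand side tends to $0$: this is the continuity of the weighted energy $E_\psi$ under decreasing limits \cite{GZ07}, combined with the comparison principle to dominate the moving measures $\omega_{u_j}^n$ by a fixed one and then apply dominated convergence. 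Hence $(u_j)$ is $d_\psi$-Cauchy, and I define $d_\psi(u,v):=\lim_j d_\psi(u_j,v_j)$ for decreasing approximants $u_j\searrow u$, $v_j\searrow v$; independence of the approximants, and the fact that the limit again equals $\Theta_\psi(u,v)$ up to constants, follow from the same monotone-continuity input. Symmetry and the triangle inequality then pass to the limit from Theorem \ref{thm: d_psi_metric_main}, while non-degeneracy comes from the lower bound, which forces $P(u,v)=u=v$ when $d_\psi(u,v)=0$. By construction $\mathcal H_\omega$ is $d_\psi$-dense in $\mathcal E_\psi$.

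To prove completeness, let $(u_j)\subset\mathcal E_\psi$ be $d_\psi$-Cauchy. The bound $\sup_j d_\psi(u_j,u_1)<\infty$ yields, via the lower bound in the comparison estimate and $\psi(\pm\infty)=\infty$, both a uniform energy bound $\sup_j E_\psi(u_j)<\infty$ and a uniform bound on $\sup_X u_j$; by the compactness of energy-bounded sets in $\textup{PSH}(X,\omega)$ \cite{GZ07} a subsequence converges in $L^1$ and a.e.\ to some $u\in\textup{PSH}(X,\omega)$. Passing to the decreasing envelopes $v_k:=\big(\sup_{l\ge k}u_{j_l}\big)^\ast\searrow u$ and using lower semicontinuity of the energy together with the Cauchy bound shows $v_k\in\mathcal E_\psi$ with $\sup_k E_\psi(v_k)<\infty$, whence $u\in\mathcal E_\psi$. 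Finally, combining $d_\psi(u_{j_k},u)\le d_\psi(u_{j_k},v_k)+d_\psi(v_k,u)$ with $d_\psi(v_k,u)\to0$ (monotone extension) and the estimate $d_\psi(u_{j_k},v_k)\lesssim\sup_{l\ge k}d_\psi(u_{j_k},u_{j_l})\to0$ (the comparison estimate applied to the envelopes $v_k$, together with the Cauchy property) shows $u_{j_k}\to u$; a standard argument then upgrades this to convergence of the full sequence, proving that $(\mathcal E_\psi,d_\psi)$ is complete and equals the completion of $(\mathcal H_\omega,d_\psi)$.

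The main obstacle is the comparison estimate of the first step, and specifically its interaction with the non-Finsler nature of $d_\psi$. Unlike the high-energy setting of \cite{Da15}, I cannot recover $d_\psi$ as a length functional, so the bound must be extracted directly from Chen's geodesic \eqref{eq: d_psi_def_main} \cite{Ch00}; controlling the geodesic velocity $\dot u_0$ for potentials that are merely $C^{1,\bar1}$ and possibly unbounded, and converting these controls into integral estimates, is delicate, and is exactly where the concavity and subadditivity of $\psi$ (Lemma \ref{lem: sublinear}) must be used in an essential way. A secondary difficulty is the handling of the moving Monge--Amp\`ere measures $\omega_{u_j}^n$ in the completeness argument, which requires the comparison principle and the continuity of weighted energies under monotone limits rather than a naive application of dominated convergence.
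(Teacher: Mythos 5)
Your first two steps track the paper's actual route quite closely: your two--sided comparison via $\Theta_\psi$ is the paper's Pythagorean identity (Lemma \ref{lem: Pythagorean}, Proposition \ref{prop: Epsi_extension}(ii)) combined with the estimate of Proposition \ref{prop: Mdist_est}, and your extension of $d_\psi$ along decreasing Demailly approximants, with the Cauchy property coming from monotone continuity of weighted energies, is exactly Lemmas \ref{lem: IntDistEst}--5.2 of the paper. Two local imprecisions there: the Cauchy property is \emph{not} obtained by ``dominating the moving measures $\omega_{u_j}^n$ by a fixed one'' (no such domination holds); the paper uses the fundamental estimate $E_\psi(v)\le C E_\psi(u)$ for $u\le v\le 0$ (Proposition \ref{prop: Energy_est}), applied relative to the shifted form $\omega_{u_l}$. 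Also, non-degeneracy does not follow from the lower bound alone: that bound only gives $u=P(u,v)$ a.e.\ with respect to $\omega_{P(u,v)}^n$, and you still need Dinew's domination principle to upgrade this to $u\le P(u,v)$ everywhere, as in Proposition \ref{prop: dpsi_nondegeneracy}.

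The genuine gap is in your completeness argument, at the step $d_\psi(u_{j_k},v_k)\lesssim\sup_{l\ge k}d_\psi(u_{j_k},u_{j_l})$ for the upper envelopes $v_k=(\sup_{l\ge k}u_{j_l})^*$. Since $u_{j_k}\le v_k$, the comparison estimate reduces this to bounding $\int_X\psi(v_k-u_{j_k})\,\omega_{u_{j_k}}^n$, and nothing in your proposal (or in the paper) controls this integral by pairwise distances. Concretely: quasi-everywhere $v_k-u_{j_k}\le\sum_{l\ge k}(u_{j_{l+1}}-u_{j_l})^+$, and subadditivity (Lemma \ref{lem: sublinear}) gives $\psi(v_k-u_{j_k})\le\sum_{l\ge k}\psi\big((u_{j_{l+1}}-u_{j_l})^+\big)$; but the resulting integrals $\int_X\psi(u_{j_{l+1}}-u_{j_l})\,\omega_{u_{j_k}}^n$ pair a difference of two potentials with the Monge--Amp\`ere measure of a \emph{third}, and the quasi-triangle inequality (Theorem \ref{thm: Energy_Metric_Eqv_main}) only controls such integrals against $\omega_{u_{j_l}}^n$ or $\omega_{u_{j_{l+1}}}^n$. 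The alternative pointwise bound $\psi(v_k-u_{j_k})\le\sup_{l\ge k}\psi\big((u_{j_l}-u_{j_k})^+\big)$ fares no better, since estimating the integral of the supremum by $\sum_{l\ge k}Cd_\psi(u_{j_k},u_{j_l})$ produces a divergent series (each term is only of size $\sim 2^{-k}$, and there are infinitely many). Nor is the map $u\mapsto\max(u,w)$ a $d_\psi$-contraction. This obstruction is precisely why the paper runs the argument in the opposite direction: after passing to a subsequence with $d_\psi(u_l,u_{l+1})\le 2^{-l}$, it forms the \emph{decreasing} rooftop envelopes $v^k_l=P(u_k,\dots,u_{k+l})$ and invokes the contraction property $d_\psi(P(u,w),P(v,w))\le d_\psi(u,v)$ (Proposition \ref{prop: Epsi_extension}(iv)), which yields $d_\psi(v^k_{l+1},v^k_l)\le d_\psi(u_{k+l+1},u_{k+l})\le 2^{-k-l}$; the geometric series then makes each $\{v^k_l\}_l$ Cauchy and monotone, and the increasing sequence of limits $v^k$ recovers the limit of $\{u_k\}$ via Lemma \ref{lem: lemma mononton_seq}. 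If you want to salvage your architecture, replace your sup-envelopes by these rooftop envelopes and your key estimate by the contraction property; as written, the quoted step is unjustified and I do not see how to repair it.
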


This result is analogous to \cite[Theorem 2]{Da15} that deals with the case of high energy classes. The similarities don't stop here. Paralleling \cite[Theorem 3]{Da15}, the $d_\psi$ metric is comparable to a concrete analytic expression:

\begin{theorem} \label{thm: Energy_Metric_Eqv_main} For any $u_0,u_1 \in \mathcal E_\psi$ we have
\begin{equation}\label{eq: Energy_Metric_Eqv_main}
d_\psi(u_0,u_1) \leq \int_X \psi(u_0 - u_1) \o_{u_0}^n + \int_X \psi(u_0 - u_1) \o_{u_1}^n\leq {2^{2n + 5}} d_\psi(u_0,u_1).
\end{equation}
\end{theorem}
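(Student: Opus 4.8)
The plan is to prove the two inequalities in \eqref{eq: Energy_Metric_Eqv_main} separately, by reducing to the case $u_0, u_1 \in \mathcal{H}_\omega$ and then passing to the limit using Theorem \ref{thm: EpsiComplete_main}. Throughout, the key object is Chen's weak geodesic $t \to u_t$ joining $u_0, u_1$, and the definition $d_\psi(u_0,u_1) = \int_X \psi(\dot u_0)\,\omega_{u_0}^n$. The first step is to establish a pointwise comparison between $\dot u_0$ (the initial velocity of the geodesic) and the endpoint difference $u_1 - u_0$. Since $t \to u_t$ is a weak geodesic, convexity of $t \to u_t(x)$ in $t$ gives the standard estimate $|u_1 - u_0| \leq |\dot u_0|$ and also $|\dot u_0| \le |u_1-u_0| + |\dot u_1|$ type bounds; more precisely, convexity yields $\dot u_0 \le u_1 - u_0 \le \dot u_1$, so $0 \le \dot u_0 - (u_1 - u_0)$ is controlled by $\dot u_1 - \dot u_0 \ge 0$.

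\smallskip

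\textbf{First inequality.} For the lower bound $d_\psi(u_0,u_1) \le \int_X \psi(u_0-u_1)\omega_{u_0}^n + \int_X \psi(u_0-u_1)\omega_{u_1}^n$, I would use that $\psi$ is even, increasing on $(0,\infty)$, and that $|\dot u_0| \ge |u_1 - u_0|$ fails in general, so one must instead argue that $\psi(\dot u_0) \le \psi(u_1 - u_0) + (\text{correction})$. The cleaner route is to exploit symmetry: by reversing the geodesic, $d_\psi(u_0,u_1) = \int_X \psi(\dot u_0)\omega_{u_0}^n$ and also compute the analogous quantity at the other endpoint. Using the convexity bound $\dot u_0 \le u_1 - u_0$ together with monotonicity and evenness of $\psi$, one controls $\int_X \psi(\dot u_0)\omega_{u_0}^n$ by the endpoint integral against $\omega_{u_0}^n$, and symmetrically against $\omega_{u_1}^n$; adding the two and absorbing constants gives the first inequality, likely with room to spare relative to the factor stated.

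\smallskip

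\textbf{Second inequality (the hard part).} The reverse bound $\int_X \psi(u_0-u_1)\omega_{u_0}^n + \int_X \psi(u_0-u_1)\omega_{u_1}^n \le 2^{2n+5}\, d_\psi(u_0,u_1)$ is where the real work lies, and I expect the explicit constant $2^{2n+5}$ to signal an inductive or iterative mechanism. The natural strategy is to interpolate: insert the midpoint $u_{1/2}$ of the geodesic and use a \emph{quasi-triangle} inequality together with the fact (from the toric model and Lemma \ref{lem: sublinear}) that $\psi$ is subadditive up to constants. I would bound $\int_X \psi(u_0 - u_1)\omega_{u_0}^n$ by a sum involving $\int_X \psi(u_0 - u_{1/2})\omega_{u_0}^n$ and $\int_X \psi(u_{1/2} - u_1)\omega_{u_{1/2}}^n$, controlling the change of measure $\omega_{u_0}^n \to \omega_{u_{1/2}}^n$ along the geodesic. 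The power $2n$ in the constant strongly suggests that the measure-comparison incurs a factor of $2$ in each of the $n$ complex dimensions (or from repeated halving through $n$ dyadic steps), while the additive $+5$ comes from finitely many applications of the subadditivity of $\psi$ and the endpoint convexity estimates.

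\smallskip

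\textbf{Main obstacle.} The principal difficulty is controlling $\psi(u_0 - u_1)$ integrated against the \emph{wrong} measure: one has good control of $\psi(\dot u_0)$ against $\omega_{u_0}^n$, but the target involves $\psi(u_0-u_1)$ against both endpoint measures, and $\psi$ being merely concave (hence sublinear, not comparable to a norm) means one cannot use Finsler-length arguments as in \cite{Da15}. The resolution must come from the pointwise geodesic comparison $\dot u_0 \le u_1 - u_0 \le \dot u_1$ combined with a careful partition of $X$ into the regions $\{|\dot u_0| \ge |u_1-u_0|\}$ and its complement, handling each with monotonicity of $\psi$ and the mass-balance $\int_X \omega_{u_t}^n = \int_X \omega^n$ for all $t$. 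Finally, to pass from $\mathcal H_\omega$ to all of $\mathcal E_\psi$, I would approximate $u_0, u_1 \in \mathcal E_\psi$ by decreasing sequences in $\mathcal H_\omega$ and invoke continuity of both sides under $d_\psi$-convergence, which is guaranteed by Theorem \ref{thm: EpsiComplete_main} together with lower semicontinuity and monotone convergence of the energy integrals.
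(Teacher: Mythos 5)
Your proposal has genuine gaps in both halves, and the common missing ingredient is the rooftop envelope $P(\cdot,\cdot)$ together with the Pythagorean identity $d_\psi(u,v)=d_\psi(u,P(u,v))+d_\psi(v,P(u,v))$, which is the engine of the paper's proof. For the first inequality, your plan rests on comparing $\psi(\dot u_0)$ with $\psi(u_1-u_0)$ along the direct geodesic from $u_0$ to $u_1$. But the convexity bound $\dot u_0\le u_1-u_0$ gives nothing once $\psi$ is even: on the set where the geodesic dips (for instance where $u_0=u_1$ but $\dot u_0<0$) one has $\psi(\dot u_0)>\psi(u_1-u_0)=0$, so no pointwise comparison holds; and ``absorbing constants'' is not available here, since the first inequality is stated with constant exactly $1$. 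The paper resolves this by the triangle inequality through $\max(u_0,u_1)$: each of the two geodesic legs (from $u_0$ up to $\max(u_0,u_1)$, and from $u_1$ up to $\max(u_0,u_1)$) is monotone, so its velocity has a sign and satisfies $0\le\dot w_0\le\max(u_0,u_1)-u_0$, making the pointwise comparison valid on each leg; this is Proposition \ref{prop: Epsi_extension}(iii), whose proof (Proposition \ref{prop: Mdist_est}) crucially requires the ordering $u\le v$.

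For the second inequality, you insert the geodesic midpoint $u_{1/2}$ and propose to ``control the change of measure $\omega_{u_0}^n\to\omega_{u_{1/2}}^n$ along the geodesic''. This step would fail: there is no pointwise domination between $\omega_{u_{1/2}}^n$ and $\omega_{u_0}^n$ for a weak geodesic. This is precisely why the paper uses the \emph{affine} average $(u_0+u_1)/2$ instead, for which $2^n\,\omega^n_{(u_0+u_1)/2}\ge\omega^n_{u_0}$ is immediate from $\omega_{(u_0+u_1)/2}=\tfrac{1}{2}(\omega_{u_0}+\omega_{u_1})$ --- this is the true source of the powers of $2^n$ in the constant, not dyadic iteration or dimension-by-dimension halving. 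The paper's actual chain is: the halfway estimate $d_\psi\big(u_0,\tfrac{u_0+u_1}{2}\big)\le 2^{n+2}d_\psi(u_0,u_1)$ (Lemma \ref{lem: halwayest}, proved with $P$ and the Pythagorean identity), then the lower bound in Proposition \ref{prop: Epsi_extension}(iii) applied to the ordered pairs given by $P\big(u_0,\tfrac{u_0+u_1}{2}\big)$ lying below both $u_0$ and $\tfrac{u_0+u_1}{2}$, and finally subadditivity of $\psi$ (Lemma \ref{lem: sublinear}) to reassemble $\psi\big(\tfrac{u_0-u_1}{2}\big)$. Note also that the quasi-triangle inequality for $I_\psi$ that you invoke as a tool is a \emph{consequence} of this very theorem (as the paper points out), so using it as an ingredient risks circularity. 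Your final limiting step from $\mathcal H_\omega$ to $\mathcal E_\psi$ is fine in spirit, but the paper sidesteps it by working directly with the properties already extended to $\mathcal E_\psi$ in Proposition \ref{prop: Epsi_extension}.
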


This result implies that the expression $I_\psi(u_0,u):=\int_X \psi(u_0 - u_1) \o_{u_0}^n + \int_X \psi(u_0 - u_1) \o_{u_1}^n$ satisfies a quasi-triangle inequality, a result of independent interest. Previously this was obtained using analytic methods for the weights $\psi(t) = |t|^p, \ p \in (0,1)$ in \cite[Theorem 1.6]{GLZ17}.

As pointed about above, the absence of a background Finsler structure requires a new approach to the proof of Theorem \ref{thm: d_psi_metric_main}. However once the triangle inequality is obtained, many pluripotential theoretic arguments can be used from \cite{Da15}, and this will be apparent in the proofs of Theorems \ref{thm: EpsiComplete_main} and \ref{thm: Energy_Metric_Eqv_main}.

Contrasting with the case of high energy classes explored in \cite{Da15}, our methods suggest that the metric $d_\psi$ is somehow positively curved (see Proposition \ref{prop: concavity} and Corollary \ref{cor: concavity}). However it remains to be seen if such a notion can be defined for non-geodesic metric spaces, as it is the case here. Since weak geodesic segments are used to define $d_\psi$ in \eqref{eq: d_psi_def_main}, it could be beneficial to understand what role these curves play from a metric/geometric point of view. In a different direction, it would be interesting to extend our results to more singular spaces. There has been a flurry of activity in this latter area recently, focusing mostly on the high energy case \cite{DNL20,GT22, Xi19,Tr19, Tr20}. Lastly, we are curious if the quantization scheme of the high energy spaces from \cite{DLR20} has an analogue in our low energy context. We hope to investigate these questions, as well as applications in future works.

\paragraph{Organization.} In Section 2 we recall known results about finite energy classes, and obtain the second order variation of low energy weak quasi-norms. After some preliminary results on our candidate metric $d_\psi$ in Section 3, we prove the triangle inequality (and Theorem 1.1) in Section 4. Theorem 1.2 is proved in Section 5. Finally, Theorem 1.3 is proved in Section 6.

\paragraph{Acknowledgements.}

I dedicate this work to the memory of Gabriela Kohr, my master's thesis advisor. Her enthusiasm towards scientific discovery will be dearly missed, as well as her passion for educating the next generation of analysts at Babe\c s-Bolyai University. I mourn her sudden loss that is especially devastating to our community.

We thank L\'aszl\'o Lempert for extensive conversations related to the topic of this paper and his earlier work \cite{Le20}, that provided the motivating spark for this paper. We also thank him for allowing us to include his argument for Proposition \ref{prop: maxilengthgeod}, that is much simpler than what we originally had in mind. We thank Prakhar Gupta for carefully reading a preliminary version of the manuscript, and pointing out numerous imprecisions. We also thank the anonymous referee for the suggestions that improved our paper.

Research partially supported by a Sloan Fellowship and National Science Foundation grant DMS-1846942.

\section{Preliminaries}

Most of our notation and terminology builds on that of \cite{GZ07}, \cite{Da15} and the survey \cite{Da19}. We refer our reader to these works for a detailed background.  Below we only recall the basics, adapted to our specific context.

\subsection{Finite energy classes} 

We recall here some basic facts about the class $\mathcal E \subset \text{PSH}(X,\o)$ and its subspaces.  We refer to the original papers \cite{GZ07} and \cite{BEGZ10} and the recent book \cite{gzbook} for a complete picture. For $v \in \text{PSH}(X,\o)$, the  canonical cutoffs $v_h, \ h \in \Bbb R$ are given by the formula $v_h := \max (-h, v ).$ By an application of the comparison principle of Bedford-Taylor theory, it follows that the Borel measures $\mathbbm{1}_{\{v >-h\}}(\o + i\partial \bar\partial v_h)^n$ are increasing in $h$. As a result, one can make make sense of $(\o + i\partial\bar\partial v)^n$ as the limit of these increasing measures, even if $v$ is unbounded:
\begin{equation}\label{non-plurip}\o_v^n := (\o + i\partial\bar\partial v)^n= \lim_{h \to \infty} \mathbbm{1}_{\{v >-h\}}(\o + i\partial \bar\partial v_h)^n.
\end{equation}
With this definition, $\o_v^n$ is called the non-pluripolar Monge-Amp\`ere measure of $v$. It follows from \eqref{non-plurip}) that
$$\int_X \o_v^n \leq \int_{X}\o^n =:V,$$
bringing us to the class of full mass potentials $\mathcal E$. By definition, $v \in \mathcal E$ if
\begin{equation}\label{Eps_def}
\int_X \o_v^n=\lim_{h \to \infty} \int_X \mathbbm{1}_{\{v >-h\}}(\o + i\partial \bar\partial v_h)^n =V.
\end{equation}

Suppose $\phi : [-\infty,\infty] \to [0,\infty]$ is a continuous even function, with $\phi(0)= 0$ and $\phi(\pm \infty)=\infty$. Such $\phi$ is referred to as a \emph{weight}. The set of all weights is denoted by $\mathcal W$. By definition, for $v \in \mathcal E$ we have $v \in \mathcal E_\phi$ if
$$E_\phi(v):=\int_X \phi(v) \o_v^n < \infty.$$
The two special classes of weights that are most interesting in the theory are:
\begin{flalign*}
\mathcal W^-&=\big\{\psi \in \mathcal W  \ \big| \ \psi \textup{ is concave, strictly increasing, and smooth on $(0,\infty)$}\},\\
\mathcal W^+_p&=\big\{\chi \in \mathcal W \ \big| \ \chi \textup{ is convex and } t\chi'(t) \leq p\chi(t), \ t \in \Bbb R\big\},\end{flalign*}
where $p \geq 1$. We note the sign difference between our convention for $\mathcal W^-$, $\mathcal W^+_p$, and the ones in \cite{GZ07} and \cite[Section 2.3]{Da17b}. 

Of particular importance are the weights $\chi_p(t)=|t|^p, p >0$, and the associated classes $\mathcal E_p:=\mathcal E_{\chi_p}$.
Note that $\chi_p \in \mathcal W^+_p$ for $p \geq 1$ and $\chi_p \in \mathcal W^-$ for $0 < p \leq 1$. The case $p=1$ interpolates between convex and concave energy classes since 
$$\mathcal E_\chi \subset \mathcal E_1 \subset \mathcal E_\psi,$$
for any $\chi \in \mathcal W^+_p$ and $\psi \in \mathcal W^-$.

In this work we will be focusing on the concave weights $\mathcal W^-$. As mentioned in the introduction, the interest in them comes from the following fact \cite[Proposition 2.2]{GZ07}:
\begin{equation}\label{eq: E_union2}
\mathcal E=\{ v \in \mathcal E_\psi \ | \ \psi \in \mathcal W^-\}.
\end{equation}

\begin{remark}\label{rem: no smooth} To be precise, in \cite{GZ07} the authors proved \eqref{eq: E_union2} for concave weights $\psi$ that are not necessarily smooth on $(0,\infty)$. However it is elementary to see that for  a non-smooth concave weight $\psi$ we can find another concave weight $\tilde \psi$, smooth on $(0,\infty)$, such that $\mathcal E_\psi = \mathcal E_{\tilde \psi}$. Indeed, one can even make sure that $\psi - \tilde \psi$ is bounded. Because of this, very little is gained from working with more general concave weights. For sake of brevity we leave it to the interested reader to work out the details of our results in the case when the weights $\psi \in \mathcal W^-$ are not assumed to be smooth on $(0,\infty)$. This can be carried out using approximation, much in the same way as it is done in \cite{Da15}. 
\end{remark}

The following result is sometimes called the ``fundamental estimate":

\begin{proposition} \textup{\cite[Lemma 2.3, Lemma 3.5]{GZ07}}\label{prop: Energy_est} Let $\phi\in \mathcal W^- \cup \mathcal W^+_p, \ p \geq1$. If $u,v \in \mathcal E_\phi$ with $u\leq v \leq 0$ then $$E_\phi(v) \leq  C E_\phi(u).$$ 
Here $C>0$ depends only on $p$.
\end{proposition}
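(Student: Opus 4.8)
The plan is to reduce the weighted energy to a one–parameter integral over sublevel sets, where the comparison principle applies directly. Fix $\phi \in \mathcal W^-\cup\mathcal W^+_p$. Since $\phi$ is even, smooth and increasing on $(0,\infty)$, with $\phi(0)=0$, for any $w\le 0$ the layer–cake identity gives $\phi(w)=\int_0^\infty\phi'(s)\,\mathbbm 1_{\{w<-s\}}\,ds$, whence by Fubini (the integrand being nonnegative)
\[
E_\phi(w)=\int_X\phi(w)\,\o_w^n=\int_0^\infty \phi'(s)\,H_w(s)\,ds,\qquad H_w(s):=\int_{\{w<-s\}}\o_w^n .
\]
Thus the whole statement is reduced to comparing the sublevel–set masses $H_v$ and $H_u$, and then transferring that comparison through the weight $\phi$.

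First I would establish the estimate for bounded $u\le v\le 0$, where the Bedford--Taylor comparison principle is available. The key step is a bound of the form $H_v(s)\le C_1\,H_u(c_1 s)$, with constants $C_1,c_1>0$ that are absolute or depend only on $p$; it is obtained by applying the comparison principle to a suitable auxiliary function (a maximum of $u$ with a shifted copy of $v$) on the nested sublevel sets $\{v<-s\}\subseteq\{u<-s\}$, and it is precisely here that the hypothesis $u\le v$ is used. Granting this, the coarea expression gives
\[
E_\phi(v)=\int_0^\infty\phi'(s)\,H_v(s)\,ds\le C_1\int_0^\infty\phi'(s)\,H_u(c_1 s)\,ds ,
\]
and after rescaling $s$ the only remaining point is to compare $\phi'$ at the two scales. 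For $\phi\in\mathcal W^-$ concavity makes $\phi'$ decreasing, so this comparison holds for free in the relevant direction; for $\phi\in\mathcal W^+_p$ the growth hypothesis $t\phi'(t)\le p\phi(t)$ encodes a doubling property of $\phi'$ and supplies a factor depending only on $p$. Either way one closes the estimate and obtains $E_\phi(v)\le C\,E_\phi(u)$ with $C$ controlled by $p$.

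The passage to general $u\le v\le 0$ in $\mathcal E_\phi$ is then routine: apply the bounded estimate to the canonical cutoffs $u_h=\max(u,-h)$ and $v_h=\max(v,-h)$, which still satisfy $u_h\le v_h\le 0$, to get $E_\phi(v_h)\le C\,E_\phi(u_h)$, and let $h\to\infty$, using the defining limit \eqref{non-plurip} of the non-pluripolar Monge--Amp\`ere measure together with the convergence $E_\phi(u_h)\to E_\phi(u)$, $E_\phi(v_h)\to E_\phi(v)$ of the weighted energies along the cutoffs.

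I expect the genuine obstacle to be the comparison–principle estimate $H_v(s)\le C_1 H_u(c_1 s)$: one must choose the right auxiliary function so that the Bedford--Taylor inequality $\int_{\{a<b\}}\o_b^n\le\int_{\{a<b\}}\o_a^n$ applies on exactly the sublevel region in question, and so that the resulting shift and multiplicative constants are absolute and interact correctly with the growth exponent $p$ in the convex range. Everything else — the layer–cake reduction, the change of variables, and the cutoff limit — is formal once this inequality is in hand.
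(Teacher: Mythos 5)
The paper does not prove this proposition at all: it is quoted from [GZ07], where Lemmas 2.3 and 3.5 are established by a Stokes/integration-by-parts argument comparing the mixed integrals $\int_X \phi(u)\,\omega_v^j\wedge\omega_u^{n-j}$ and exploiting concavity/convexity of the weight through $dd^c(\phi\circ u)$. Your route through sublevel-set masses is therefore genuinely different, and its peripheral steps are sound: the layer-cake identity, the transfer through $\phi'$ (monotonicity of $\phi'$ in the concave case; in the convex case $t\phi'(t)\le p\phi(t)$ gives $\phi(2t)\le 2^p\phi(t)$ and, with $\phi(t)\le t\phi'(t)$, a doubling bound $\phi'(2t)\le \tfrac{p2^p}{2}\phi'(t)$), and the reduction to canonical cutoffs. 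However, the entire argument rests on the estimate $H_v(s)\le C_1H_u(c_1s)$, which you assert but do not prove — and, as you yourself flag, this is exactly where the content lies. Moreover, the auxiliary function you hint at (a maximum of $u$ with a shifted copy of $v$) does not deliver it: comparing $\max(u,v-\tau)$ against $v$ produces the set $\{u<v\}$, and the right-hand side of the Bedford--Taylor inequality then contains either $\int_{\{u<v\}}\omega_u^n$ over regions where $u$ is not deep (uncontrolled by any $H_u(\cdot)$) or $\omega_v^n$-terms again (circular). So, as written, the core of the proof is missing and the hint points in an unworkable direction.

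The good news is that the estimate is true, with a one-line proof once one replaces the maximum by a convex combination with a constant. For $\epsilon\in(0,1)$ apply the comparison principle to the pair $u$ and $(1-\epsilon)v-\epsilon s$ (both bounded $\omega$-psh after your cutoff reduction). Since $u\le v\le 0$, on $\{v<-s\}$ one has $(1-\epsilon)v-\epsilon s-u\ge \epsilon(-v-s)>0$, while $v\le 0$ forces $(1-\epsilon)v-\epsilon s\le -\epsilon s$; hence
\begin{equation*}
\{v<-s\}\ \subseteq\ \{u<(1-\epsilon)v-\epsilon s\}\ \subseteq\ \{u<-\epsilon s\},
\end{equation*}
and since $\big(\omega+(1-\epsilon)dd^cv\big)^n=\big((1-\epsilon)\omega_v+\epsilon\omega\big)^n\ge (1-\epsilon)^n\omega_v^n$, the comparison principle yields $(1-\epsilon)^nH_v(s)\le H_u(\epsilon s)$; with $\epsilon=\tfrac12$ this is $H_v(s)\le 2^nH_u(s/2)$. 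Your outline then closes, giving $E_\phi(v)\le 2^{n+1}E_\phi(u)$ for $\phi\in\mathcal W^-$ and $E_\phi(v)\le p\,2^{n+p}E_\phi(u)$ for $\phi\in\mathcal W^+_p$ (note that, exactly as with the $2^n$ of [GZ07], the constant depends on the fixed dimension $n$ as well as on $p$). Two further cautions: first, prove the cutoff convergence $E_\phi(u_h)\to E_\phi(u)$ directly from \eqref{non-plurip} (plurifine locality on $\{u>-h\}$ plus $\phi(h)\int_{\{u\le -h\}}\omega_{u_h}^n\le \int_{\{u\le-h\}}\phi(u)\,\omega_u^n\to 0$ by dominated convergence), rather than citing the general monotone-continuity result (Proposition \ref{E_semicont}), which in [GZ07] is established after, and with the help of, the fundamental estimate you are proving; second, the identity $\int_{\{u\le-h\}}\omega_{u_h}^n=\int_{\{u\le-h\}}\omega_u^n$ used there is where full mass $u\in\mathcal E$ enters.
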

If $\phi\in \mathcal W^- \cup \mathcal W^+_p, \ p \geq1$ then the $\phi$-energy has a very useful continuity property:

\begin{proposition}\textup{\cite[Proposition 5.6]{GZ07}} \label{E_semicont} Let $\phi\in \mathcal W^- \cup \mathcal W^+_p$ and $\{u_j\} _{j \in \Bbb N} \subset \textup{PSH}(X,\o) \cap L^\infty$ is a sequence decreasing to $u \in \textup{PSH}(X,\o)$. If
 $\sup_j E_\phi(u_j) <\infty$ then $u \in \mathcal E_\phi$. Moreover we have
$$E_\phi(u) = \lim_{j\to \infty}E_\phi(u_j).$$
\end{proposition}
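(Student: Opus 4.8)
The plan is to reduce everything to the continuity of the Monge--Amp\`ere operator along \emph{bounded} decreasing sequences (Bedford--Taylor), and then to control the unbounded tails uniformly using the fundamental estimate of Proposition \ref{prop: Energy_est}. After subtracting a fixed constant I may assume $u_j \le 0$, so that $u \le \dots \le u_{j+1} \le u_j \le 0$. For a cutoff level $h>0$ I introduce the truncations $u^h := \max(u,-h)$ and $u_j^h := \max(u_j,-h)$, all of which lie in $\textup{PSH}(X,\o)\cap L^\infty$, and I split the energy as $E_\phi(u_j) = A_j^h + B_j^h$, where $A_j^h := \int_{\{u_j>-h\}}\phi(u_j)\o_{u_j}^n$ is the ``head'' and $B_j^h := \int_{\{u_j\le -h\}}\phi(u_j)\o_{u_j}^n \ge 0$ is the ``tail''; I write $A^h, B^h$ for the corresponding quantities attached to $u$.

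First I would handle the bounded part. For fixed $h$ the sequence $u_j^h$ decreases to $u^h$ through bounded potentials, so Bedford--Taylor continuity of the Monge--Amp\`ere operator gives $\o_{u_j^h}^n \rightharpoonup \o_{u^h}^n$ weakly; combined with quasi-continuity of the (bounded) integrand $\phi(u_j^h)$ this yields $E_\phi(u_j^h)\to E_\phi(u^h)$ as $j\to\infty$. By locality of the non-pluripolar measure on the plurifine open set $\{u_j>-h\}$ one has $A_j^h = E_\phi(u_j^h) - \phi(h)\,\o_{u_j^h}^n(\{u_j\le -h\})$, and restricting to the (a.e.) levels $h$ not charged by the relevant measures, the cutoff masses converge as well; hence $\lim_{j\to\infty} A_j^h = A^h$.

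Next, membership $u\in\mathcal E_\phi$ comes almost for free. Since $u_j \le u_j^h \le 0$, Proposition \ref{prop: Energy_est} gives $E_\phi(u_j^h)\le C\,E_\phi(u_j)\le CM$, whence $E_\phi(u^h) = \lim_j E_\phi(u_j^h)\le CM$ for every $h$. By the very construction \eqref{non-plurip} of $\o_u^n$ as the increasing limit of $\mathbbm 1_{\{u>-h\}}\o_{u^h}^n$, monotone convergence gives $A^h \uparrow E_\phi(u)$; since $A^h \le E_\phi(u^h)\le CM$, I conclude $E_\phi(u)\le CM<\infty$, i.e. $u\in\mathcal E_\phi$.

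It remains to upgrade this to the equality $E_\phi(u)=\lim_j E_\phi(u_j)$, and here the crux is a \emph{uniform tail estimate}: $\sup_j B_j^h \to 0$ as $h\to\infty$. Granting it, the lower bound follows from $E_\phi(u_j)\ge A_j^h$ together with $A^h\uparrow E_\phi(u)$, while the upper bound follows from $E_\phi(u_j) = A_j^h + B_j^h \le A_j^h + \sup_i B_i^h$ upon letting $j\to\infty$ (using $A_j^h\to A^h$) and then $h\to\infty$; the two bounds sandwich the claim. This uniform tail control is the main obstacle: a uniform energy bound by itself does not force the weighted tails to be uniformly small, so I must exploit the decreasing structure $\{u_j\le -h\}\subseteq\{u\le -h\}$ together with the comparison principle to dominate $B_j^h$ by a tail of the \emph{fixed} function $u$, of the form $C\int_{\{u\le -h+c\}}\phi(u)\o_u^n$, with $C,c$ independent of $j$; the latter tends to $0$ as $h\to\infty$ precisely because $u\in\mathcal E_\phi$. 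Producing this $j$-uniform domination is exactly where the concavity of $\psi\in\mathcal W^-$ (equivalently the growth bound $t\chi'(t)\le p\chi(t)$ for $\chi\in\mathcal W^+_p$) underlying Proposition \ref{prop: Energy_est} enters, and it is the technical heart of the argument.
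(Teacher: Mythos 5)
The paper itself contains no proof to compare against: Proposition \ref{E_semicont} is quoted verbatim from \cite[Proposition 5.6]{GZ07}, so the benchmark is Guedj--Zeriahi's argument, whose overall scheme (truncate, apply Bedford--Taylor continuity to the bounded parts, control the unbounded parts uniformly) your outline reproduces. However, your proposal has a genuine gap, and you name it yourself: the uniform tail estimate $\sup_j B_j^h\to 0$ as $h\to\infty$ is described but never proved, and it is not a technicality --- it is the entire content of the $\limsup$ direction, i.e.\ the hard half of the proposition. Nothing you invoke supplies it: Proposition \ref{prop: Energy_est} compares \emph{total} energies and gives no localization to sublevel sets, and weak convergence only yields the $\liminf$ bound. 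The missing ingredient is the sublevel comparison hidden in the \emph{proof} (not the statement) of the fundamental inequality: for $u\le u_j\le 0$ and $t>0$ one has $\{u_j<-2t\}\subseteq\{u<\tfrac{u_j}{2}-t\}$, so the comparison principle applied to the pair $u$ and $\tfrac{u_j}{2}-t$, together with $\big(\omega+\tfrac12 \i\del\dbar u_j\big)^n\ge 2^{-n}\omega_{u_j}^n$, gives
$$\omega_{u_j}^n\big(\{u_j<-2t\}\big)\ \le\ 2^n\,\omega_u^n\big(\{u<-t\}\big).$$
Feeding this into the layer-cake expression $B_j^h=\phi(h)\,\omega_{u_j}^n(\{u_j\le -h\})+\int_h^\infty\phi'(t)\,\omega_{u_j}^n(\{u_j\le -t\})\,dt$ and using the doubling bounds $\phi(2s)\le C\phi(s)$ and $\phi'(2s)\le C\phi'(s)$ (valid for $\phi\in\mathcal W^-$ by concavity and Lemma \ref{lem: sublinear}, and for $\phi\in\mathcal W^+_p$ by the growth condition) yields $B_j^h\le C_{n,p}\int_{\{u\le -h/2\}}\phi(u)\,\omega_u^n$ uniformly in $j$, which is the estimate you postulated (at level $h/2$ rather than $h-c$). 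Note that this step also needs the comparison principle for \emph{unbounded} potentials of full mass, or a further cutoff-and-limit argument, since $u\notin L^\infty$. Without carrying this out, the proof is incomplete at exactly its decisive point.

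There is a second, smaller gap: by the paper's definition $\mathcal E_\phi\subset\mathcal E$, so membership requires the full-mass condition \eqref{Eps_def}, and finiteness of $E_\phi(u)$ alone does not imply it --- the non-pluripolar measure $\omega_u^n$ can even vanish identically (e.g.\ a potential on $\Bbb P^1$ whose curvature current is a Dirac mass has $E_\phi(u)=0$ yet $u\notin\mathcal E$). The repair is short with what you already have: Chebyshev gives $\omega_{u^h}^n(\{u\le -h\})\le E_\phi(u^h)/\phi(h)\le CM/\phi(h)\to 0$, and since $\int_X\omega_{u^h}^n=V$ this forces $\lim_h\int_{\{u>-h\}}\omega_{u^h}^n=V$, i.e.\ $u\in\mathcal E$; but as written you never address full mass. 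Finally, your claim that $\lim_j A_j^h=A^h$ for ``a.e.\ level $h$'' is under-justified: both the measure $\omega_{u_j^h}^n$ and the set $\{u_j\le -h\}$ move with $j$, so weak convergence plus uncharged levels does not immediately apply. This one is routine to fix (replace indicators by continuous cutoffs $\theta_h(u_j)$, use plurifine locality to convert $\theta_h(u_j)\,\omega_{u_j}^n$ into $\theta_h(u_j^{h'})\,\omega_{u_j^{h'}}^n$ for $h'>h$, then pass to the limit), but it should be said.
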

Using the canonical cutoffs, the last two results imply  the very important ``monotonicity property":
\begin{corollary}\label{cor: Emonoton} Let $\phi\in \mathcal W^- \cup \mathcal W^+_p, \ p \geq1$. If $u \leq v$ and $u \in \mathcal E_\phi$ then $v \in \mathcal E_\phi$.
\end{corollary}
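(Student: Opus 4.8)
The plan is to apply Proposition~\ref{E_semicont} to the canonical cutoffs of $v$, deriving the uniform energy bound it requires from the fundamental estimate (Proposition~\ref{prop: Energy_est}). The one maneuver needed to set this up is a normalization, since Proposition~\ref{prop: Energy_est} compares potentials that are both $\le 0$.

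First I would reduce to the situation $u \le v \le 0$. As $v$ is usc on the compact $X$ it is bounded above, so setting $c := \sup_X v$ and replacing $u,v$ by $u-c,\,v-c$ we may assume $v \le 0$, hence $u \le v \le 0$. This reduction is harmless because $\mathcal E_\phi$ is stable under the addition of a constant: one has $\o_{u-c}=\o_u$, so $E_\phi(u-c)=\int_X \phi(u-c)\,\o_u^n$, and using that $\phi$ is even together with the subadditivity of concave weights (for $\phi\in\mathcal W^-$) or the growth bound $\phi(\lambda t)\le \lambda^p\phi(t)$ for $\lambda\ge 1$ (for $\phi\in\mathcal W^+_p$, a consequence of $t\phi'(t)\le p\phi(t)$), one obtains $E_\phi(u-c)\le C'\big(E_\phi(u)+\phi(c)V\big)<\infty$. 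The same estimate shows that $v-c\in\mathcal E_\phi$ is equivalent to $v\in\mathcal E_\phi$, so no generality is lost.

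Next, consider the canonical cutoffs $v_h := \max(-h, v)$. Each $v_h$ is a bounded $\o$-psh function, hence lies in $\mathcal E_\phi$ with $E_\phi(v_h)\le \phi(h)V<\infty$, and $v_h \downarrow v$ as $h \to \infty$. The key observation is that $u \le v_h \le 0$ with both $u$ and $v_h$ in $\mathcal E_\phi$, so Proposition~\ref{prop: Energy_est} applies directly to the pair $u \le v_h \le 0$, with $v_h$ playing the role of the larger potential, and yields
\[
E_\phi(v_h) \le C\, E_\phi(u), \qquad h>0,
\]
the constant $C$ being independent of $h$. In particular $\sup_{h} E_\phi(v_h) \le C\,E_\phi(u) < \infty$. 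Since $\{v_h\}$ is then a sequence of bounded $\o$-psh functions decreasing to $v$ with uniformly bounded $\phi$-energy, Proposition~\ref{E_semicont} gives $v \in \mathcal E_\phi$ (and moreover $E_\phi(v)=\lim_h E_\phi(v_h)\le C\,E_\phi(u)$), which is the assertion.

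I expect the only genuinely delicate point to be the normalization: one must verify that membership in $\mathcal E_\phi$—as opposed to the translation-invariant class $\mathcal E$—survives subtraction of a constant, and this is exactly where the structural hypotheses on the weight (concavity for $\mathcal W^-$, the doubling-type growth for $\mathcal W^+_p$) enter. Once $u \le v_h \le 0$ has been arranged, the heart of the matter is simply the recognition that the fundamental estimate converts the known finiteness of $E_\phi(u)$ into a bound on $E_\phi(v_h)$ uniform in $h$, after which Proposition~\ref{E_semicont} closes the argument with no further computation.
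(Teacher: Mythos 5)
Your proof is correct and follows exactly the route the paper indicates: it states that the corollary follows from the fundamental estimate (Proposition \ref{prop: Energy_est}) and the continuity property (Proposition \ref{E_semicont}) applied to the canonical cutoffs $v_h$, which is precisely your argument. The normalization to $u \le v_h \le 0$ via subtracting $\sup_X v$ (justified by subadditivity for $\mathcal W^-$, respectively the growth bound for $\mathcal W^+_p$) is a detail the paper leaves implicit, and you handle it correctly.
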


We note that the continuity property of the Monge-Amp\`ere operator from Bedford-Taylor theory \cite{BT76} is also preserved in this more general setting:
\begin{proposition}\textup{\cite[Theorem 2.17]{BEGZ10} \label{MA_cont}} Suppose  $\{ v_k\}_{k \in \Bbb N} \subset \mathcal E(X,\o)$ decreases (increases  a.e.) to $v \in \mathcal E(X,\o)$. Then $\o_{v_k}^n \to \o_v^n$ weakly.
\end{proposition}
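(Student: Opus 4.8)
The plan is to deduce weak convergence from the \emph{one–sided} (lower semicontinuous) estimate
\[
\liminf_{j\to\infty}\int_X f\,\o_{v_j}^n \ \ge\ \int_X f\,\o_v^n\qquad\text{for every continuous } f\ge 0,
\]
and then to upgrade it to a genuine limit using the fact that all the measures have the \emph{same} total mass $V$, which is precisely the defining feature of $\mathcal E$. Granting the displayed bound, for an arbitrary continuous $g$ I would apply it to the nonnegative function $(\sup_X g)-g$ and use $\int_X\o_{v_j}^n=\int_X\o_v^n=V$ to cancel the constant, obtaining $\limsup_j\int_X g\,\o_{v_j}^n\le\int_X g\,\o_v^n$; applying it to $g-\inf_X g$ gives the matching $\liminf$, hence $\int_X g\,\o_{v_j}^n\to\int_X g\,\o_v^n$. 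This reduction is the conceptual heart of the matter: conservation of mass is exactly what converts a one–sided bound into two–sided weak convergence, and it is the reason the statement is restricted to $\mathcal E$.

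To prove the lower bound, fix $h>0$ and set $w_j:=\max(v_j,-h)$ and $w:=\max(v,-h)$, bounded $\o$-psh functions with $w_j\to w$ monotonically a.e. I would invoke the plurifine locality of the non-pluripolar operator, $\mathbbm 1_{\{v_j>-h\}}\o_{v_j}^n=\mathbbm 1_{\{v_j>-h\}}\o_{w_j}^n$, together with the contact–set identity $\mathbbm 1_{\{v_j<-h\}}\o_{w_j}^n=\mathbbm 1_{\{v_j<-h\}}\o^n$ on the region where $w_j$ is locally constant. Dropping the mass on $\{v_j\le -h\}$ and using $f\ge 0$,
\[
\int_X f\,\o_{v_j}^n\ \ge\ \int_X f\,\mathbbm 1_{\{v_j>-h\}}\o_{w_j}^n\ =\ \int_X f\,\o_{w_j}^n-\int_X f\,\mathbbm 1_{\{v_j\le -h\}}\o^n .
\]
Since $w_j\to w$ monotonically and $f$ is continuous, Bedford–Taylor continuity for bounded potentials \cite{BT76} gives $\int_X f\,\o_{w_j}^n\to\int_X f\,\o_w^n$, while the error term satisfies $\limsup_j\int_X f\,\mathbbm 1_{\{v_j\le -h\}}\o^n\le\int_X f\,\mathbbm 1_{\{v\le -h\}}\o^n$ in both the decreasing and increasing cases (by the monotone convergence of the sublevel sets $\{v_j\le -h\}$).

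Combining these, for each fixed $h$ the locality/contact–set decomposition $\o_w^n=\mathbbm 1_{\{v>-h\}}\o_v^n+\mathbbm 1_{\{v\le-h\}}\o^n$ yields
\[
\liminf_{j\to\infty}\int_X f\,\o_{v_j}^n\ \ge\ \int_X f\,\o_w^n-\int_X f\,\mathbbm 1_{\{v\le -h\}}\o^n\ =\ \int_X f\,\mathbbm 1_{\{v>-h\}}\o_v^n .
\]
Letting $h\to\infty$, the right–hand side increases to $\int_X f\,\o_v^n$: this is precisely \eqref{non-plurip} together with the full–mass property \eqref{Eps_def}, which guarantee $\mathbbm 1_{\{v>-h\}}\o_v^n\nearrow\o_v^n$ and that $\o_v^n$ (like $\o^n$) assigns no mass to the pluripolar set $\{v=-\infty\}$. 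This establishes the lower bound and, via the first paragraph, completes the proof.

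The step I expect to be most delicate is the bookkeeping of the truncation at level $-h$: justifying the plurifine locality and the contact–set identities for the \emph{non-pluripolar} measures $\o_{v_j}^n$ themselves (not merely for bounded potentials), and showing that the boundary level sets $\{v_j=-h\}$ contribute negligibly — for instance by choosing $h$ outside a countable exceptional set or by a separate contact–set argument. The analytic inputs, namely Bedford–Taylor continuity in the bounded case and the monotone convergence of the error term, are routine; in the decreasing case one could alternatively control the error uniformly in $j$ through the energy estimates (Proposition \ref{prop: Energy_est} and Corollary \ref{cor: Emonoton}), but this is not needed once mass conservation is exploited as above.
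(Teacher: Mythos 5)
Your outer reduction is correct, and it is indeed the standard skeleton: since $\int_X \omega_{v_j}^n=\int_X\omega_v^n=V$ by the very definition \eqref{Eps_def} of $\mathcal E$, the one-sided bound $\liminf_j\int_X f\,\omega_{v_j}^n\ge\int_X f\,\omega_v^n$ for all continuous $f\ge 0$ does upgrade to weak convergence exactly as you describe. (For the record, the paper offers no proof of this proposition; it is quoted verbatim from \cite[Theorem 2.17]{BEGZ10}, so your attempt can only be compared with the literature.) The genuine gap is the contact-set identity $\mathbbm{1}_{\{v_j\le -h\}}\omega_{w_j}^n=\mathbbm{1}_{\{v_j\le -h\}}\omega^n$. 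Locality gives this only on the \emph{open} set $\{v_j<-h\}$, where $w_j\equiv -h$; on the level set $\{v_j=-h\}$ the truncated measure genuinely charges mass. Indeed, using $\omega_{w_j}^n(X)=V$ ($w_j$ is bounded), $\omega_{v_j}^n(X)=V$ ($v_j\in\mathcal E$), plurifine locality on $\{v_j>-h\}$ and ordinary locality on $\{v_j<-h\}$, one computes
\begin{equation*}
\omega_{w_j}^n(\{v_j=-h\})=\omega_{v_j}^n(\{v_j\le -h\})-\omega^n(\{v_j<-h\}),
\qquad\text{so}\qquad
\omega_{w_j}^n(\{v_j\le -h\})=\omega_{v_j}^n(\{v_j\le -h\}).
\end{equation*}
Thus the error term you must subtract is the \emph{tail mass of the non-pluripolar measure itself}, not the $\omega^n$-volume of the sublevel set, and these are incomparable: already for $n=1$ and a full-mass potential behaving like $-(-\log|z|)^{1/2}$ near a point, $\omega_v(\{v\le -h\})\approx \pi/h$ decays only polynomially, while $\omega(\{v\le -h\})$ is super-exponentially small; the discrepancy sits on $\{v=-h\}$ for \emph{every} $h$ (the truncated measure is a uniform measure on the circle $\{v=-h\}$ there). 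Hence neither discarding countably many values of $h$ nor a ``separate contact-set argument'' can rescue the identity, and your step bounding $\limsup_j$ of the error by $\int_X f\,\mathbbm{1}_{\{v\le -h\}}\omega^n$ via monotone convergence of sublevel sets controls the wrong quantity.

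What the corrected argument needs is precisely the uniform tail estimate $\sup_j\omega_{v_j}^n(\{v_j\le -h\})\to 0$ as $h\to\infty$; this interchange of the limits in $j$ and $h$ is the actual heart of the theorem, and it is where monotonicity and full mass enter beyond conservation of total mass. It is available from tools the paper quotes: normalize all potentials to be $\le 0$, set $u_*:=v$ (decreasing case) or $u_*:=v_1$ (increasing case), so that $u_*\le v_j\le 0$; choose $\psi_0\in\mathcal W^-$ with $u_*\in\mathcal E_{\psi_0}$ via \eqref{eq: E_union2}; then Corollary \ref{cor: Emonoton} gives $v_j\in\mathcal E_{\psi_0}$, Proposition \ref{prop: Energy_est} gives $E_{\psi_0}(v_j)\le C\,E_{\psi_0}(u_*)$, and Chebyshev's inequality gives $\omega_{v_j}^n(\{v_j\le -h\})\le C\,E_{\psi_0}(u_*)/\psi_0(h)$, uniformly in $j$. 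With this in hand your chain closes cleanly, and you can skip the level-set decomposition of $\omega_w^n$ altogether: $\int_X f\,\omega_{v_j}^n\ge\int_X f\,\omega_{w_j}^n-\|f\|_{\infty}\,C E_{\psi_0}(u_*)/\psi_0(h)$, Bedford--Taylor gives $\int_X f\,\omega_{w_j}^n\to\int_X f\,\omega_w^n\ge\int_X f\,\mathbbm{1}_{\{v>-h\}}\omega_v^n$, and letting $h\to\infty$ finishes via \eqref{non-plurip}. So your closing sentence has the logic reversed: the energy estimates are not an optional alternative rendered unnecessary by mass conservation --- mass conservation handles only the outer reduction, while energy (or capacity/comparison) estimates are indispensable for the error term, which is also how \cite{BEGZ10} proceeds.
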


A more general weak convergence result is proved in \cite[Proposition 2.20]{Da19}, and the remark following it.

\subsection{The $L^2$ metric and weak geodesics}

As introduced by Mabuchi, and independently by Semmes and Donaldson, $\mathcal H_\omega$ can be endowed with a natural infinite dimensional $L^2$-type Riemannian metric:
\begin{equation}\label{eq: MabL2}
\langle\alpha,\beta\rangle_u = \frac{1}{\int_X \omega^n}\int_X \alpha \beta \omega_u^n, \ \ \alpha,\beta \in T_u \mathcal H_\omega = C^\infty(X).
\end{equation}
One can compute the Levi-Civita connection $\nabla_{(\cdot)}(\cdot)$ of this inner-product and the associated geodesic equation.  For a thorough discussion of the $L^2$ Mabuchi--Semmes--Donaldson geometry, as well as its Levi-Civita connection, we refer to the surveys \cite[Section 4]{Bl13}, \cite[Section 3.1]{Da19}, as well as the original papers \cite{Ma87,Se92,Do99,Ch00}.

Unfortunately smooth geodesics connecting arbitrary $u_0,u_1 \in \mathcal H_\omega$ don't exist, but a weak notion of geodesic was studied by Chen \cite{Ch00}. His construction can be generalized to construct weak geodesic segments connecting points of $\text{PSH}(X,\o)\cap L^\infty(X)$. Following Berndtssson, we recall how this argument works.

As before, let $S \subset \Bbb C$ be the strip $\{ 0 < \textup{Re }s < 1\}$ and $\tilde \o$ be the pullback of $\o$ to the product $S \times X$. As argued in \cite[Section 2.1]{Bern13a}, for $u_0,u_1 \in \text{PSH}(X,\o)\cap L^\infty(X)$ the following Dirichlet problem has a unique solution:
\begin{alignat}{2}\label{bvp_Bern}
&u \in \text{PSH}(S\times X, \tilde\o)\cap L^\infty(S\times X)\nonumber\\
&(\tilde \o + i \partial \overline{\partial}u)^{n+1}=0 \nonumber\\
&u(t+ir,x) =u(t,x) \ \forall x \in X, t \in (0,1), r \in \Bbb R \\
&\lim_{t \to 0,1}u(t,x)=u_{0,1}(x),  \forall x \in X\nonumber.
\end{alignat}
Since the solution $u$ invariant in the imaginary direction, we denote it by $[0,1] \ni t\to u_t \in \text{PSH}(X,\o)\cap L^\infty$ and call it the weak geodesic joining $u_0$ and $u_1$.

In case $u_0,u_1 \in \mathcal H_\omega$ in \cite{Ch00} it was proved that $\Delta u \in L^\infty(S \times X)$. Such a curve $[0,1] \ni t \to u_t \in \text{PSH}(X,\o)\cap C^{1,\bar 1}=: \mathcal H_\omega^{\Delta}$ is called a $C^{1,\bar 1}$-geodesic.

A curve $[0,1]\ni t \to v_t \in \text{PSH}(X,\o)$ is called a subgeodesic if $v(s,x):=v_{\textup{Re} s}(x) \in \text{PSH}(S\times X,\tilde \o)$. We recall that the solution $u$ of \eqref{bvp_Bern} is constructed as the upper envelope
\begin{equation}\label{udef1}
u = \sup_{v \in \mathcal S}v,
\end{equation}
where $\mathcal S$ is the following set of weak subgeodesics:
$$ \mathcal S = \{ (0,1) \ni t \to v_t \in \text{PSH}(X,\o) \textup{ is a subgeodesic with }\lim_{t \to 0,1}v_t \leq u_{0,1} \}.$$
For a thorough discussion of weak geodesics we refer to Section 3 in the survey \cite{Da19}.

\subsection{First and second order variation of weak quasi-norms}

To start, we observe that concave weights are subadditive:

\begin{lemma}\label{lem: sublinear}Let $\psi \in \mathcal W^-$. Then $\psi$ is subadditive, i.e., $\psi(a+b) \leq \psi(a) + \psi(b), \ a,b \in \Bbb R$.
\end{lemma}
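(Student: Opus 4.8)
The plan is to prove subadditivity of $\psi \in \mathcal W^-$ directly from the three structural facts we have: $\psi$ is even, $\psi(0)=0$, and $\psi$ is concave and increasing on $[0,\infty)$. The key observation is that a concave function on $[0,\infty)$ with $\psi(0)=0$ is automatically subadditive on the nonnegative reals, and then the even symmetry lets me reduce all sign configurations to this case.

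First I would establish subadditivity on $[0,\infty)$. Fix $a,b \geq 0$ with $a+b>0$. Writing $a = \frac{a}{a+b}(a+b) + \frac{b}{a+b}\cdot 0$ and applying concavity together with $\psi(0)=0$ gives
\begin{equation*}
\psi(a) \geq \frac{a}{a+b}\,\psi(a+b), \qquad \psi(b) \geq \frac{b}{a+b}\,\psi(a+b).
\end{equation*}
Adding these two inequalities yields $\psi(a)+\psi(b) \geq \psi(a+b)$, which is the desired estimate. The degenerate case $a=b=0$ is trivial since $\psi(0)=0$.

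Next I would handle the general sign cases using that $\psi$ is even and increasing on $[0,\infty)$. Because $\psi$ is even, $\psi(x)=\psi(|x|)$ for all $x\in\Bbb R$, so it suffices to compare $\psi(|a+b|)$ with $\psi(|a|)+\psi(|b|)$. When $a,b$ have the same sign, $|a+b| = |a|+|b|$ and the estimate is exactly the case already proved. When $a,b$ have opposite signs, the triangle inequality gives $|a+b| \leq |a|+|b|$, and since $\psi$ is nondecreasing on $[0,\infty)$ we get $\psi(|a+b|) \leq \psi(|a|+|b|) \leq \psi(|a|)+\psi(|b|)$, again invoking the nonnegative case. This covers all configurations and completes the proof.

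There is no serious obstacle here; the only point requiring a little care is that concavity alone does not give subadditivity without the normalization $\psi(0)=0$, so I would make sure to use that hypothesis explicitly in the convex-combination step above. The reduction via evenness and monotonicity is routine once the $[0,\infty)$ case is in hand.
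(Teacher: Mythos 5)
Your proof is correct, and while it follows the same reduction as the paper (use evenness and monotonicity on $[0,\infty)$ together with $|a+b|\leq |a|+|b|$ to reduce to the case $a,b\geq 0$), your argument for the key nonnegative case is genuinely different. The paper writes $\psi(|a|)-\psi(0)=\int_0^{|a|}\partial_+\psi(t)\,dt \geq \int_{|b|}^{|a|+|b|}\partial_+\psi(t)\,dt = \psi(|a|+|b|)-\psi(|b|)$, exploiting that the right-derivative $\partial_+\psi$ is decreasing; your proof instead uses the chord inequality, writing $a=\frac{a}{a+b}(a+b)+\frac{b}{a+b}\cdot 0$ and invoking concavity plus $\psi(0)=0$ directly. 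Your route has the small advantage of never touching derivatives or smoothness — it only needs concavity on $[0,\infty)$ (which follows from concavity on $(0,\infty)$ together with continuity at $0$, a point worth stating since $\mathcal W^-$ only posits concavity on $(0,\infty)$), so it would survive verbatim for the non-smooth weights mentioned in Remark \ref{rem: no smooth}. The paper's integral argument, by contrast, packages both inequalities into one line once $\partial_+\psi$ is known to be monotone. Both are complete; yours is marginally more elementary and more general, the paper's is more compact.
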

\begin{proof} Since $\psi$ is increasing on $[0,\infty)$ we have that $\psi(a+b)= \psi(|a+b|) \leq \psi(|a| + |b|)$. On the other hand, since $\partial_+ \psi$ is decreasing on $(0,\infty)$, we can finish the proof in the following way:
$$\psi(|a|) = \psi(|a|) - \psi(0) =  \int_0^{|a|} \partial_+ \psi(t) dt \geq \int_{|b|}^{|a| + |b|} \partial_+ \psi(t) dt = \psi(|a| + |b|) - \psi(|b|).$$
\end{proof}

To any concave weight $\psi \in \mathcal W^-$, and a finite measure space $(Y, \mu)$, one can associate  the space $L^\psi_\mu$. These will be $\mu$-measurable functions $f$, such that $\int_Y \psi(f)d\mu < \infty$. For such functions $f$, we can associate the a \emph{weak quasi-norm} that is only homogeneous, and typically does not satisfy (even weaker versions of) the triangle inequality:

\begin{equation}\label{eq: quasi-norm-def}\| f\|_{\psi,\mu} := \inf \{N>0 \ \textup{ s.t. }  \int_Y \psi(f/N) d\mu \leq 1 \}.
\end{equation}

When $\psi \in \mathcal W^+_p$, the above quantity  does define a bona-fide norm, and these are used in the K\"ahler geometry literature for approximation of $L^p$ Finsler metrics \cite{Da15,DL20}. Despite the fact that in our case the triangle inequality fails, these weak quasi-norms will still be important in our discussion.  
To note, compared to \cite[(13)]{Da15}, our definition in \eqref{eq: quasi-norm-def} is slightly different. There, to obtain the H\"older inequality \cite[(14)]{Da15}, we needed a version of \eqref{eq: quasi-norm-def} that is invariant with respect to taking scalar multiples of $\psi$. Our definition here is intentionally not scale invariant, since we need exactly this property in the last step of the proof of Proposition \ref{prop: concavity} below.

Given $u \in \mathcal H_\omega$ and $f \in L^\psi_{\omega_u^n}$, we will denote $\| f\|_{\psi,\omega_u^n}$ simply as $\|f \|_{\psi,u}$. To start, we note the following elementary convergence result.

\begin{lemma} \label{lem: convergence}Let $\mu$ and $\mu_k$ be finite Borel measures on $Y$. Let $\psi \in \mathcal W^-$, and $f_k,f$ be bounded functions that are $\mu_k$-measurable and $\mu$-measurable respectively. If $\int_Y \psi(c f_k) d\mu_k \to \int_Y \psi(c f) d\mu$ for all $c \in [0,\infty)$, then $\| f_k\|_{\psi,\mu_k} \to \| f\|_{\psi,\mu}$.
\end{lemma}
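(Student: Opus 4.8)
The plan is to prove that $\|f_k\|_{\psi,\mu_k} \to \|f\|_{\psi,\mu}$ by unwinding the definition \eqref{eq: quasi-norm-def} and exploiting the hypothesis that $\int_Y \psi(c f_k)\,d\mu_k \to \int_Y \psi(c f)\,d\mu$ for every fixed $c \geq 0$. The key structural observation is that for a fixed bounded function $g$ on a finite measure space, the quantity $N \mapsto \int_Y \psi(g/N)\,d\mu$ is continuous and strictly decreasing in $N$ on $(0,\infty)$ (strictly as long as $g$ is not a.e.\ zero), running from $\int_Y \psi(g/0^+) = +\infty$ (or a large value) down to $\int_Y \psi(0)\,d\mu = 0$. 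Hence $\|g\|_{\psi,\mu}$ is precisely the unique level at which the integral equals $1$, i.e.\ it is characterized by the equation $\int_Y \psi(g/\|g\|_{\psi,\mu})\,d\mu = 1$ whenever $\|g\|_{\psi,\mu} > 0$. The strategy is to turn the convergence of integrals at each scale $c$ into convergence of these crossing levels.

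First I would set $N := \|f\|_{\psi,\mu}$ and dispose of the degenerate case $N=0$ (which forces $f=0$ a.e.) separately, reducing to $N>0$. For the main case, fix $\eps > 0$ small enough that $N - \eps > 0$. Writing $c = 1/(N+\eps)$ and $c' = 1/(N-\eps)$, the strict monotonicity gives $\int_Y \psi(f/(N+\eps))\,d\mu < 1 < \int_Y \psi(f/(N-\eps))\,d\mu$, with both inequalities strict. By the hypothesis applied at the two fixed scales $c$ and $c'$, for $k$ large we get $\int_Y \psi(f_k/(N+\eps))\,d\mu_k < 1$ and $\int_Y \psi(f_k/(N-\eps))\,d\mu_k > 1$. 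By the definition of the infimum in \eqref{eq: quasi-norm-def}, the first inequality yields $\|f_k\|_{\psi,\mu_k} \leq N + \eps$, and the second, via monotonicity, yields $\|f_k\|_{\psi,\mu_k} \geq N - \eps$. Thus $|\,\|f_k\|_{\psi,\mu_k} - N\,| \leq \eps$ for all large $k$, and letting $\eps \to 0$ completes the argument.

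The main obstacle I anticipate is justifying the two facts about the single-function map $N \mapsto \int_Y \psi(g/N)\,d\mu$: its continuity in $N$ and the \emph{strictness} of the inequalities at $N \pm \eps$. Continuity follows from dominated convergence since $g$ is bounded, $\mu$ is finite, and $\psi$ is continuous, so $\psi(g/N)$ is uniformly bounded for $N$ bounded away from $0$. Strict monotonicity requires that $\psi$ is strictly increasing on $(0,\infty)$ (which holds by definition of $\mathcal W^-$) together with the fact that on the set where $g \neq 0$ (of positive measure, since $g$ is not a.e.\ zero in the nondegenerate case) the integrand strictly increases as $N$ decreases; on the set $\{g=0\}$ the integrand stays $0$. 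One subtlety is ensuring that at the infimum level the integral genuinely equals $1$ rather than merely being approached—here continuity of $N \mapsto \int_Y \psi(g/N)\,d\mu$ guarantees the value $1$ is attained, which is what legitimizes choosing the strict perturbations $N \pm \eps$. A final point of care is the degenerate case where $f_k$ or $f$ is a.e.\ zero, where $\|\cdot\|_{\psi}$ vanishes; this should be handled by noting that the hypothesis at scale $c=1$ forces $\int_Y \psi(f)\,d\mu = \lim_k \int_Y \psi(f_k)\,d\mu_k$, so the vanishing of the limiting norm is compatible with the vanishing of the approximating norms, and the $\eps$-argument degenerates gracefully.
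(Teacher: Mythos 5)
Your proof is correct and follows essentially the same route as the paper: both exploit continuity and strict monotonicity of the map $c \mapsto \int_Y \psi(c f)\,d\mu$ to get strict separation from the value $1$ at two perturbed scales, then apply the hypothesis at those two fixed scales to trap $\|f_k\|_{\psi,\mu_k}$ within $\eps$ of $\|f\|_{\psi,\mu}$. The only (cosmetic) difference is that you perturb the norm by $\pm\eps$ and use strict inequalities, while the paper perturbs by a $\delta_\eps$ calibrated so the integrals land in $[1-\eps,1-\eps/2]$ and $[1+\eps/2,1+\eps]$; the paper likewise dispatches the degenerate case $f\equiv 0$ with a single remark.
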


\begin{proof} We can assume that $f \not\equiv 0$ (a.e. with respect to $\mu$).  In this case $[0,\infty) \ni c \to \int_Y \psi(c f) d\mu \in \Bbb R$ is strictly increasing and continuous (the latter by the dominated convergence theorem). As a result, for any $\varepsilon >0$ there exists $\delta^1_\varepsilon,\delta^2_\varepsilon >0$ such that:
$$1+ \frac{\varepsilon}{2} \leq\int_Y \psi\bigg(\frac{f}{\|f \|_{\psi,\mu} - \delta^1}\bigg)d\mu \leq 1 + \varepsilon \  \textup{ and } \ 1-\varepsilon \leq\int_Y \psi\bigg(\frac{f}{\|f \|_{\psi,\mu} + \delta^2}\bigg)d\mu \leq 1- \frac{\varepsilon}{2}.$$
In addition, $\delta^1_\varepsilon,\delta^2_\varepsilon \searrow 0$ as $\varepsilon \searrow 0$.

By our assumption we have that
$\int_Y \psi\big({f_k}/{(\|f \|_{\psi,\mu} - \delta^1)}\big)d\mu_k  \to \int_Y \psi({f}/{(\|f \|_{\psi,\mu} - \delta^1)})d\mu$ and $\int_Y \psi({f_k}/{(\|f \|_{\psi,\mu} + \delta^2)})d\mu_k \to \int_Y \psi({f}/{(\|f \|_{\psi,\mu} + \delta^2)})d\mu.$ By definition of our weak quasi-norm we conclude that 
$\| f\|_{\psi, \mu} - \delta^1_\varepsilon \leq \liminf_k \| f_k\|_{\psi,\mu_k} \leq \limsup_k \| f_k\|_{\psi,\mu_k} \leq \| f\|_{\psi, \mu} + \delta^2_\varepsilon,$ finishing the proof. Letting $\varepsilon \searrow 0$, the result follows.
\end{proof}

\begin{lemma} \label{lem: convergence2}Let $f_k,f$ be continuous functions on a compact topological space $Y$ and $f_k \to f$ uniformly. Let $\mu$ and $\mu_k$ be Borel measures on $Y$ with finite mass such that $\mu_k \to \mu$ weakly. Then $\int_Y \psi(c f_k) d\mu_k \to \int_Y \psi(c f) d\mu$ for any $c \in [0,\infty)$ and $\| f_k\|_{\psi,\mu_k} \to \| f\|_{\psi,\mu}$.
\end{lemma}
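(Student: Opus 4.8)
The plan is to reduce the desired conclusion to Lemma~\ref{lem: convergence}, which already converts the convergence of the energies $\int_Y \psi(cf_k)\,d\mu_k$ to the convergence of the weak quasi-norms. Thus the substance of the proof is the first assertion: that $\int_Y \psi(c f_k)\,d\mu_k \to \int_Y \psi(c f)\,d\mu$ for each fixed $c \in [0,\infty)$. Once this is established, Lemma~\ref{lem: convergence} applies verbatim (its boundedness hypothesis on $f_k,f$ is automatic here, since continuous functions on a compact space are bounded, and uniform convergence keeps the family uniformly bounded), yielding $\|f_k\|_{\psi,\mu_k}\to\|f\|_{\psi,\mu}$.

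To prove the energy convergence, fix $c \geq 0$ and set $g_k := \psi(c f_k)$ and $g := \psi(cf)$. First I would note that $g_k \to g$ \emph{uniformly} on $Y$: the functions $cf_k$ converge uniformly to $cf$, they all take values in a fixed compact interval $[-M,M]$ (by uniform boundedness of the $f_k$), and $\psi$ is continuous, hence uniformly continuous on $[-M,M]$; composing a uniformly convergent sequence with a uniformly continuous function preserves uniform convergence. In particular each $g_k$ is continuous on $Y$, so the pairings $\int_Y g_k\,d\mu_k$ make sense. The target is to show $\int_Y g_k\,d\mu_k \to \int_Y g\,d\mu$.

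The key step is a standard splitting into two error terms:
\begin{equation*}
\Big| \int_Y g_k\,d\mu_k - \int_Y g\,d\mu \Big| \leq \Big| \int_Y (g_k - g)\,d\mu_k \Big| + \Big| \int_Y g\,d\mu_k - \int_Y g\,d\mu \Big|.
\end{equation*}
The second term tends to $0$ because $g$ is a fixed continuous function on the compact space $Y$ and $\mu_k \to \mu$ weakly, which is exactly the defining property of weak convergence of measures. For the first term, I would bound it by $\|g_k - g\|_{L^\infty(Y)} \cdot \mu_k(Y)$. The sup-norm factor tends to $0$ by the uniform convergence $g_k \to g$ established above. It remains to check that the total masses $\mu_k(Y)$ stay bounded; this follows from weak convergence by testing against the constant function $\mathbbm{1} \in C(Y)$, giving $\mu_k(Y) \to \mu(Y) < \infty$, so the masses are eventually bounded by, say, $\mu(Y)+1$. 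Hence the first term is bounded by $\|g_k - g\|_{L^\infty(Y)}(\mu(Y)+1) \to 0$, and both terms vanish in the limit.

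The only mild subtlety — and the step I would be most careful about — is the uniform-continuity argument producing uniform convergence of $g_k$ to $g$, since $\psi$ need only be concave and continuous (not Lipschitz) on all of $\RR$; restricting to the compact interval $[-M,M]$ containing all values of $cf_k$ and $cf$ is what makes $\psi$ uniformly continuous there and saves the argument. Apart from this, everything is routine: the second error term is precisely weak convergence against a fixed test function, and the first is controlled by a uniform estimate together with the uniform boundedness of the masses $\mu_k(Y)$ furnished again by weak convergence. Having verified the energy convergence for every $c \geq 0$, the weak quasi-norm convergence follows immediately by invoking Lemma~\ref{lem: convergence}.
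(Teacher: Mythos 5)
Your proof is correct and follows the same route as the paper: establish uniform convergence of $\psi(cf_k)$ to $\psi(cf)$ via uniform continuity of $\psi$ on a compact interval, combine this with weak convergence of the measures (and boundedness of their masses) to get convergence of the integrals, and then invoke Lemma~\ref{lem: convergence}. The paper states this argument in two sentences; you have simply supplied the standard splitting and mass-boundedness details it leaves implicit.
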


\begin{proof} For any $c>0$ we have that $\psi(cf_k) \to \psi(cf)$ uniformly. Since we are dealing with finite measure spaces and $Y$ is compact, it follows that $\int_Y \psi(c f_k) d\mu_k \to \int_Y \psi(c f) d\mu$. The last claim follows from Lemma \ref{lem: convergence}.
\end{proof}

In case we have smooth maps $[0,1] \ni t \to v_t \in \mathcal H_\omega$, $[0,1] \ni t \to f_t \in C^\infty$ with $f_t >0$, it is easy to see that $t \to \| f_t\|_{\psi,v_t}$ is smooth. Indeed, since $\psi|_{(0,\infty)}$ is smooth, the arguments of \cite[Proposition 3.1]{Da15} carry over without change, and we have the following precise formula for the first derivative:

\begin{proposition}\label{prop: OrliczNormDiff} Suppose $\psi \in \mathcal W^-$. Given a smooth curve $(0,1) \ni t \to u_t \in \mathcal H$, i.e. $u(t,x):=u_t(x) \in C^\infty((0,1)\times X)$, and a positive smooth vector field $(0,1) \ni t \to f_t \in C^\infty(X)$ along this curve, the following formula holds:
\begin{equation}\label{eq: OrliczNormDiffEq}
\partial_t \|f_t\|_{\psi,u_t} = \frac{\int_X \psi' \Big(\frac{f_t}{\|f_t\|_{\psi,u_t}}\Big)\nabla_{\dot u_t} f_t \omega_{u_t}^n}{\int_X \psi'\Big(\frac{f_t}{\|f_t\|_{\psi,u_t}}\Big) \frac{f_t}{\|f_t\|_{\psi,u_t}}  \omega_{u_t}^n},
\end{equation}
where $\nabla$ is the covariant derivative of the $L^2$ Mabuchi--Semmes--Donaldson metric \eqref{eq: MabL2}.
\end{proposition}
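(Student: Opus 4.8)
The plan is to differentiate implicitly the equation that characterizes the weak quasi-norm. First I would record that for fixed $t$ the map $(0,\infty) \ni N \mapsto \int_X \psi(f_t/N)\,\omega_{u_t}^n$ is continuous and strictly decreasing, tending to $\infty$ as $N \to 0^+$ (since $\psi(\infty)=\infty$ and $f_t>0$) and to $0$ as $N \to \infty$ (since $\psi(0)=0$). Consequently $N(t):=\|f_t\|_{\psi,u_t}$ is the unique positive number for which $G(t,N(t))=1$, where
\[
G(t,N):=\int_X \psi\!\left(\frac{f_t}{N}\right)\omega_{u_t}^n .
\]
Because $f_t$ is smooth and positive on the compact manifold $X$, it is bounded away from $0$ and $\infty$ locally uniformly in $t$, so $f_t/N$ ranges in a compact subset of $(0,\infty)$ where $\psi$ is smooth; together with the smooth dependence of the volume forms $\omega_{u_t}^n$ on $t$, this makes $G$ jointly smooth on its domain, with differentiation under the integral sign justified by compactness of $X$.

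Next I would invoke the implicit function theorem. Differentiating in $N$ gives
\[
\partial_N G(t,N)=-\frac1N\int_X \psi'\!\left(\frac{f_t}{N}\right)\frac{f_t}{N}\,\omega_{u_t}^n<0,
\]
which is strictly negative since $\psi'>0$ and $f_t>0$. Hence $N(t)$ is smooth (proving the asserted differentiability of $t\mapsto\|f_t\|_{\psi,u_t}$) and $N'(t)=-\partial_t G/\partial_N G$, evaluated at $(t,N(t))$. The numerator $\partial_t G$ splits into two contributions: differentiating $\psi(f_t/N)$ produces $\tfrac1N\int_X \psi'(f_t/N)\,\dot f_t\,\omega_{u_t}^n$, while differentiating the Monge--Amp\`ere volume form produces $\int_X \psi(f_t/N)\,\partial_t\omega_{u_t}^n$, where $\partial_t\omega_{u_t}^n = n\,i\ddbar \dot u_t\wedge\omega_{u_t}^{n-1}=(\Delta_{u_t}\dot u_t)\,\omega_{u_t}^n$ with $\Delta_{u_t}=\tr_{\omega_{u_t}}(i\ddbar\,\cdot\,)$.

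The decisive step is to integrate the second contribution by parts and recognize the Mabuchi covariant derivative. Using self-adjointness of $\Delta_{u_t}$ and the chain rule $\nabla\psi(f_t/N)=\tfrac1N\psi'(f_t/N)\nabla f_t$, one rewrites
\[
\int_X \psi\!\left(\frac{f_t}{N}\right)\Delta_{u_t}\dot u_t\,\omega_{u_t}^n = -\frac1N\int_X \psi'\!\left(\frac{f_t}{N}\right)\frac12\langle \nabla f_t,\nabla \dot u_t\rangle_{\omega_{u_t}}\,\omega_{u_t}^n,
\]
where $\langle\cdot,\cdot\rangle_{\omega_{u_t}}$ is the Riemannian gradient pairing of $\omega_{u_t}$, the factor $\tfrac12$ arising because the complex Laplacian $\Delta_{u_t}$ is half the Laplace--Beltrami operator of $\omega_{u_t}$. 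Adding the two contributions, the bracket $\dot f_t-\tfrac12\langle\nabla f_t,\nabla\dot u_t\rangle_{\omega_{u_t}}$ is exactly $\nabla_{\dot u_t}f_t$, the covariant derivative of the $L^2$ metric \eqref{eq: MabL2} (rescaling by $1/V$ does not affect the connection), so that $\partial_t G=\tfrac1N\int_X \psi'(f_t/N)\,\nabla_{\dot u_t}f_t\,\omega_{u_t}^n$. Dividing $-\partial_t G$ by $\partial_N G$ and setting $N=\|f_t\|_{\psi,u_t}$ yields \eqref{eq: OrliczNormDiffEq}. The only genuinely delicate point is this integration-by-parts identity, where one must track the normalization constant so that the Mabuchi connection appears precisely; the remaining manipulations mirror \cite[Proposition 3.1]{Da15}, since $\psi$ is smooth on $(0,\infty)$ and $f_t$ stays positive, keeping all integrands in the smooth regime of $\psi$.
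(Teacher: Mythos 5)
Your proof is correct and follows essentially the same route as the paper: the paper defers to \cite[Proposition 3.1]{Da15}, whose argument is precisely this implicit differentiation of the defining identity $\int_X \psi\big(f_t/\|f_t\|_{\psi,u_t}\big)\,\omega_{u_t}^n = 1$, with the variation of $\omega_{u_t}^n$ integrated by parts to produce the Mabuchi covariant derivative. Your bookkeeping of the factor $\tfrac12$ (complex versus Riemannian Laplacian) and of the normalization is accurate, so there is nothing to correct.
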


Recall that Chen's $\varepsilon$-geodesics are smooth curves $t \to u_t$ that solve the following elliptic equation \cite{Ch00}:
\begin{equation}\label{eq: epsgeodeq}
\nabla_{\partial_t u} \partial_t u \omega_u^n= \varepsilon \omega^n.
\end{equation} 
As pointed out in \cite{Ch00}, the advantage of $\varepsilon$-geodesics is that they are smooth, and approximate uniformly the weak $C^{1\bar 1}$-geodesic connecting $u_0,u_1 \in \mathcal H_\omega$ that solves \eqref{bvp_Bern}.

For this paper we need to compute the second order variation of the length of very special vector fields across $\varepsilon$-geodesics (c.f. \cite[Section 4]{Le20} and \cite[Section 5]{CC3}):

\begin{proposition}\label{prop: OrliczNormSecondDiff} Suppose $\psi \in \mathcal W^-$. Let $[0,1]^2 \ni (s,t) \to u(s,t) \in \mathcal H_\omega$ be smooth and an $\varepsilon$-geodesic in each $t$-direction, such that $\partial_s u >0$. The following formula holds:
\begin{flalign}\label{eq: OrliczNormSecondDiffEq}
&\partial^2_t  \|\partial_s u\|_{\psi,u} = \\
&=\frac{  \int_X \psi''(\eta)\Big( \|\partial_s u\|_{\psi,u} \big(\nabla_{\partial_t u}\eta \big)^2 + \frac{1}{ \|\partial_s u\|_{\psi,u}}\{\partial_s u,\partial_t u\}_{\omega_u}^2 + \frac{\varepsilon}{ \|\partial_s u\|_{\psi,u}} \langle \nabla_{\omega_u} \partial_s u , \nabla_{\omega_u} \partial_s u \rangle \frac{\omega^n}{\omega_{u}^n}  \Big) \omega_{u}^n}{\int_X \psi'(\eta) \eta \omega_{u}^n}, \nonumber
\end{flalign}
where $\{\cdot,\cdot\}_{\omega_u}^2$ is the Poisson bracket of $\omega_u$, and we introduced $\eta := \frac{\partial_s u}{\|\partial_s u\|_{\psi,u}}$, for simplicity. In particular, for fixed $s$, the map $t \to   \|\partial_s u\|_{\psi,u}$ is concave.
\end{proposition}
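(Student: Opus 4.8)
\emph{Proof plan.} The plan is to characterize $N(t):=\|\partial_s u\|_{\psi,u}$ implicitly and differentiate it twice in $t$, using the metric compatibility of the Mabuchi--Semmes--Donaldson connection as the main engine. Since $\partial_s u>0$ and $\psi(\pm\infty)=\infty$, for each fixed $s$ the map $N\mapsto \int_X\psi(\partial_s u/N)\omega_u^n$ is continuous and strictly decreasing from $+\infty$ to $0$, so $N(t)$ is the unique positive number with $\int_X\psi(\eta)\omega_u^n=1$, where $\eta:=\partial_s u/N$. The whole computation rests on the identity $\partial_t\int_X F(\phi)\omega_u^n=\int_X F'(\phi)\,\nabla_{\partial_t u}\phi\,\omega_u^n$, valid for smooth $F$ and any smooth vector field $\phi$ along the curve: it follows by differentiating the integrand, using $\partial_t\omega_u^n=n\,i\ddbar\dot u\wedge\omega_u^{n-1}$ for the variation of the measure, and integrating by parts to convert the measure term into $-\int_X F'(\phi)\langle\partial\dot u,\partial\phi\rangle_{\omega_u}\omega_u^n$, which is exactly the connection term in $\nabla_{\partial_t u}\phi$. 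Its bilinear version $\partial_t\int_X AB\,\omega_u^n=\int_X(\nabla_{\partial_t u}A)B+A(\nabla_{\partial_t u}B)\,\omega_u^n$ is just metric compatibility.

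First I would differentiate the constraint $\int_X\psi(\eta)\omega_u^n=1$ once. Writing $\nabla_{\partial_t u}\eta=\tfrac1N\nabla_{\partial_t u}\partial_s u-\tfrac{\dot N}{N}\eta$ (here $N$ is constant in $x$), the engine yields $\int_X\psi'(\eta)\nabla_{\partial_t u}\eta\,\omega_u^n=0$, which both recovers Proposition \ref{prop: OrliczNormDiff} and gives the first-order relation $\int_X\psi'(\eta)\nabla_{\partial_t u}\partial_s u\,\omega_u^n=\dot N\int_X\psi'(\eta)\eta\,\omega_u^n$. Differentiating a second time with the bilinear engine gives $0=\int_X\psi''(\eta)(\nabla_{\partial_t u}\eta)^2\omega_u^n+\int_X\psi'(\eta)\nabla_{\partial_t u}^2\eta\,\omega_u^n$. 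Expanding $\nabla_{\partial_t u}^2\eta$ in terms of $\nabla_{\partial_t u}^2\partial_s u$, $\nabla_{\partial_t u}\partial_s u$, $\eta$ and the scalars $\dot N,\ddot N$, and then feeding in the first-order relation, the two terms proportional to $\dot N^2$ cancel, leaving
$$\ddot N\int_X\psi'(\eta)\eta\,\omega_u^n=N\int_X\psi''(\eta)(\nabla_{\partial_t u}\eta)^2\omega_u^n+\int_X\psi'(\eta)\,\nabla_{\partial_t u}^2\partial_s u\,\omega_u^n.$$
At this point the first stated term is already present, and everything reduces to evaluating the last integral.

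The heart of the argument, and the step I expect to be the main obstacle, is to show that $\int_X\psi'(\eta)\nabla_{\partial_t u}^2\partial_s u\,\omega_u^n$ equals the remaining two terms of the numerator. I would use torsion-freeness, $\nabla_{\partial_t u}\partial_s u=\nabla_{\partial_s u}\partial_t u$, and the curvature identity for a two-parameter family to write $\nabla_{\partial_t u}^2\partial_s u=\nabla_{\partial_s u}(\nabla_{\partial_t u}\partial_t u)+R(\partial_t u,\partial_s u)\partial_t u$. The $\varepsilon$-geodesic equation \eqref{eq: epsgeodeq} turns the first summand into $\nabla_{\partial_s u}\big(\varepsilon\,\omega^n/\omega_u^n\big)$; integrating $\psi'(\eta)$ against it, applying the bilinear engine in the $s$-direction and using $\partial\eta=\tfrac1N\partial(\partial_s u)$ together with $(\varepsilon\,\omega^n/\omega_u^n)\,\omega_u^n=\varepsilon\,\omega^n$, the contribution collapses exactly to $\tfrac{\varepsilon}{N}\int_X\psi''(\eta)\langle\nabla_{\omega_u}\partial_s u,\nabla_{\omega_u}\partial_s u\rangle\tfrac{\omega^n}{\omega_u^n}\omega_u^n$. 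For the curvature summand I would insert the Mabuchi--Semmes--Donaldson curvature formula (expressing $R(\phi,\chi)\xi$ through the iterated Poisson bracket $\{\{\phi,\chi\},\xi\}_{\omega_u}$) and integrate by parts, repeatedly using that Hamiltonian vector fields preserve $\omega_u^n$ and that $\{\cdot,\cdot\}_{\omega_u}$ is a derivation, so that $\{\partial_t u,\psi'(\eta)\}=\psi''(\eta)\{\partial_t u,\eta\}$ and $\{\partial_t u,\eta\}=\tfrac1N\{\partial_t u,\partial_s u\}$; this produces $\tfrac1N\int_X\psi''(\eta)\{\partial_s u,\partial_t u\}^2_{\omega_u}\omega_u^n$. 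Matching the constant here is the delicate point, as it is sensitive to the normalization of the Poisson bracket and of the curvature tensor; I would fix these conventions once at the outset so that the two cancel to give the coefficient in \eqref{eq: OrliczNormSecondDiffEq}.

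Finally, concavity is immediate from signs. Since $\partial_s u>0$ and $N>0$ we have $\eta>0$, hence $\psi'(\eta)>0$ (as $\psi$ is strictly increasing on $(0,\infty)$), so the denominator $\int_X\psi'(\eta)\eta\,\omega_u^n$ is strictly positive. In the numerator each of the three bracketed quantities is nonnegative: $N(\nabla_{\partial_t u}\eta)^2$ and $\tfrac1N\{\partial_s u,\partial_t u\}^2_{\omega_u}$ are squares, and the $\varepsilon$-term is a nonnegative gradient length times $\varepsilon\,\omega^n/\omega_u^n\ge 0$, while $\psi''(\eta)\le 0$ by concavity of $\psi$. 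Therefore the numerator is $\le 0$, giving $\partial_t^2\|\partial_s u\|_{\psi,u}\le 0$, i.e. $t\mapsto\|\partial_s u\|_{\psi,u}$ is concave.
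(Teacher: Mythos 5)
Your proposal is correct and is essentially the paper's own argument: the decisive step in both is writing $\nabla_{\partial_t u}\nabla_{\partial_t u}\partial_s u = R(\partial_t u,\partial_s u)\partial_t u + \nabla_{\partial_s u}\nabla_{\partial_t u}\partial_t u$ via torsion-freeness and curvature, then evaluating the curvature term through the Mabuchi curvature (Poisson bracket) formula with integration by parts, and the remaining term through the $\varepsilon$-geodesic equation \eqref{eq: epsgeodeq} with integration by parts, all powered by metric compatibility of the connection. The only (equivalent) bookkeeping difference is that you differentiate the constraint $\int_X\psi(\eta)\,\omega_u^n=1$ twice and cancel the $\dot N^2$ terms, whereas the paper differentiates the first-order formula \eqref{eq: OrliczNormDiffEq} by the quotient rule and cancels the mixed term involving $\partial_t\|\partial_s u\|_{\psi,u}$.
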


\begin{proof} The proof is a careful calculation of the derivative of the right hand side of \eqref{eq: OrliczNormDiffEq} in case $s \in [0,1]$ is fixed and $f_t := \partial_s u(s,t), \ u_t := u(s,t)$. 

We start with some side calculations, and put things together in the end.  Since $\int_X \psi(\eta) \omega_u^n =1$, the product rule for the Levi-Civita connection gives:
\begin{equation}\label{eq: sidecalc1}
\int_X \psi'(\eta)\nabla_{\partial_t u}\eta\, \omega_u^n =\partial_t \int_X \psi(\eta) \omega_u^n= 0.
\end{equation}
Using this identity and the product rule of the Levi-Civita connection again, we can differentiate the denominator of the right hand side of \eqref{eq: OrliczNormDiffEq} and obtain:
\begin{equation}\label{eq: sidecalc2}
\partial_t {\int_X \psi'(\eta) \eta \,  \omega_{u}^n} = {\int_X \psi''(\eta) \, \eta \, \nabla_{\partial_t u}(\eta) \omega_{u}^n}.
\end{equation}

Next we turn to the numerator of the right hand side of \eqref{eq: OrliczNormDiffEq}. The product rule again gives:
\begin{flalign}\label{eq: sidecalc3}
\partial_t  \int_X \psi' (\eta)\nabla_{\partial_t u} \partial_s u \, \omega_{u}^n &=\int_X \psi'' (\eta )\nabla_{\partial_t u} (\eta)  \nabla_{\partial_t u} \partial_s u  \, \omega_{u}^n + \int_X \psi' (\eta)\nabla_{\partial_t u}\nabla_{\partial_t u} \partial_s u \nonumber \\
&=\int_X \psi'' (\eta )\nabla_{\partial_t u} (\eta)  \nabla_{\partial_t u} \partial_s u  \, \omega_{u}^n + \int_X \psi' (\eta)\nabla_{\partial_t u}\nabla_{\partial_s u} \partial_t u \, \omega_{u}^n. 
\end{flalign}

For the last term on the right of \eqref{eq: sidecalc3} we make the following side computation:
\begin{flalign}\label{eq: sidecalc4}
\int_X&  \psi'(\eta) \nabla_{\partial_t u}\nabla_{\partial_t u} \partial_s u \omega_{u}^n = \int_X \psi'(\eta) R(\partial_t u, \partial_s u)\partial_t u  \, \omega_{u}^n + \int_X  \psi'(\eta) \nabla_{\partial_s u}\nabla_{\partial_t u} \partial_t u \, \omega_{u}^n \nonumber\\
&= \frac{1}{{\|\partial_s u\|_{\psi,u}}} \int_X \psi'' (\eta) \{\partial_s u,\partial_t u\}^2 \, \omega_u^n  + \varepsilon \int_X \psi' (\eta) \nabla_{\partial_s} \Big( \frac{\omega^n}{\omega_u^n}\Big) {\omega_u^n} \nonumber \\
&= \frac{1}{{\|\partial_s u\|_{\psi,u}}} \int_X \psi'' (\eta) \{\partial_s u,\partial_t u\}^2 \, \omega_u^n  - \nonumber\\
&-\varepsilon \int_X \psi' (\eta) \Big( \Delta_{\omega_u} \partial_s u \cdot \Big( \frac{\omega^n}{\omega_u^n}\Big) + \langle \nabla_{\omega_u}\Big( \frac{\omega^n}{\omega_u^n}\Big),\nabla_{\omega_u}\partial_s u  \rangle_{\omega_u}\Big) {\omega_u^n} \nonumber \\
&= \frac{1}{{\|\partial_s u\|_{\psi,u}}} \int_X \psi'' (\eta) \{\partial_s u,\partial_t u\}^2 \, \omega_u^n  + \frac{\varepsilon}{{\|\partial_s u\|_{\psi,u}}} \int_X \psi'' (\eta) \langle \nabla_{\omega_u} \partial_s u , \nabla_{\omega_u} \partial_s u \rangle_{\omega_u} {\omega^n},
\end{flalign}
where in the second we used the precise formula curvature $R(\cdot,\cdot)(\cdot)$ (computed in \cite[(5.13)]{CC3} or \cite[Theorem 5]{Bl13}) and \eqref{eq: epsgeodeq}, in the third line we used the formula for the Levi-Civita connection, and in the last line we used integration by parts.

For the first term of \eqref{eq: sidecalc3} we use that $\partial_s u =  \| \partial_s u \|_{\psi,u}\eta$, and the product rule for the Levi-Civita connection:
\begin{flalign}\label{eq: sidecalc5}
\int_X& \psi'' (\eta)\nabla_{\partial_t u}(\eta)   \nabla_{\partial_t u} \partial_s u \,  \omega_{u}^n = \\
& =\|\partial_s u\|_{\psi,u}  \int_X \psi'' (\eta) \big(\nabla_{\partial_t u}( \eta)\big)^2 \, \omega_{u}^n  +  \partial_t \|\partial_s u\|_{\psi,u}   \int_X \psi'' (\eta)  \eta \nabla_{\partial_t u}(\eta)  \, \omega_{u}^n \nonumber
\end{flalign}

Substituting \eqref{eq: sidecalc4} and \eqref{eq: sidecalc5} into \eqref{eq: sidecalc3} we arrive at

\begin{flalign}\label{eq: sidecalc6}
\partial_t  \int_X & \psi' (\eta)\nabla_{\partial_t u} \partial_s u \, \omega_{u}^n =   \int_X  \psi''(\eta)\Big(  \partial_t \|\partial_s u\|_{\psi,u}     \eta \nabla_{\partial_t u}(\eta) +   \|\partial_s u\|_{\psi,u} \big(\nabla_{\partial_t u}\eta \big)^2 + \nonumber \\
&+ \frac{1}{ \|\partial_s u\|_{\psi,u}}\{\partial_s u,\partial_t u\}^2 + \frac{\varepsilon}{ \|\partial_s u\|_{\psi,u}} \langle \nabla_{\omega_u} \partial_s u , \nabla_{\omega_u} \partial_s u \rangle \frac{\omega^n}{\omega_{u}^n}  \Big) \omega_{u}^n
\end{flalign}
 
Differentiating \eqref{eq: OrliczNormDiffEq}, we bring the above calculations together:

\begin{flalign}\label{eq: fullcalc}
\partial^2_t  \|\partial_s u\|_{\psi,u} &= \frac{\partial_t  \int_X \psi' (\eta)\nabla_{\partial_t u} \partial_s u \omega_{u}^n}{\int_X \psi'(\eta) \eta  \omega_{u}^n} - \partial_t  \|\partial_s u\|_{\psi,u} \frac{ \partial_t {\int_X \psi'(\eta) \eta  \omega_{u}^n}}{\int_X \psi'(\eta) \eta  \omega_{u}^n} \\
 &= \frac{\partial_t  \int_X \psi' (\eta)\nabla_{\partial_t u} \partial_s u \omega_{u}^n}{\int_X \psi'(\eta) \eta  \omega_{u}^n} - \partial_t  \|\partial_s u\|_{\psi,u} \frac{  {\int_X \psi''(\eta) \eta \nabla_{\partial_t u}(\eta) \omega_{u}^n}}{\int_X \psi'(\eta) \eta  \omega_{u}^n}, \nonumber
\end{flalign}
where in the last line we used \eqref{eq: sidecalc2}.
Next, in the numerator of the  first fraction we now substitute \eqref{eq: sidecalc6} and notice that the last term on the right hand side of  \eqref{eq: fullcalc} will cancel with the first term on the right hand side of \eqref{eq: sidecalc6}, ultimately yielding \eqref{eq: OrliczNormSecondDiffEq}.
\end{proof}

\section{The candidate metric $d_\psi$}

We start with a preliminary discussion of our candidate metric $d_\psi$, defined in \eqref{eq: d_psi_def_main}. By He's theorem \cite[Theorem 1.1]{He15}, we know that for $u_0,u_1 \in \mathcal H_\omega^{\Delta} := \textup{PSH}(X,\omega)\cap \{ \Delta_\omega v \in L^\infty\}=\textup{PSH}(X,\omega)\cap C^{1\bar 1}$, we also have $u_t \in \mathcal H_\omega^\Delta, \ t \in [0,1]$, where $t \to u_t$ is the weak geodesic connecting $u_0,u_1$. 

It is not yet known if $t \to u_t$ is $C^1$ in the $t$-direction when $u_0,u_1 \in \mathcal H_\o^{1,\bar 1}$. However, since $t \to u_t$ is $t$-convex, it makes sense to define $\dot u_0$ as the right derivative at $t =0$ and $\dot u_1$ as the left derivative at $t =1$. As a result, it is possible to extend the definition \eqref{eq: d_psi_def_main} to potentials with bounded Laplacian:
\begin{equation}\label{eq: d_psi_def}
d_\psi(u_0,u_1) := \int_X \psi(\dot u_0) \omega_{u_0}^n.
\end{equation}

This will be helpful since many operations on potentials are stable in the class $\mathcal H_\omega^\Delta$, and are not stable in $\mathcal H_\omega$. For example, by \cite[Theorem 2.5]{DR16}, we know that $u,v \in \mathcal H_\omega^\Delta$ implies $P(u,v) \in \mathcal H_\omega^\Delta$. The same property is not true for potentials of $\mathcal H_\omega$.

In addition, we also introduce
\begin{equation}\label{eq: d_psi_hat_def}
\hat d_\psi(u_0,u_1) =  \|\dot u_0\|_{\psi,u_0},
\end{equation}
where the term on the right hand side is the weak quasi-norm of $\dot u_0$ with respect to the weight $\psi$ and the measure $\omega_{u_0}^n$ \eqref{eq: quasi-norm-def}.

In case of $u_0,u_1 \in \mathcal H_\omega$, by \cite[Lemma 4.10]{Da15} (slightly extending \cite[Proposition 2.2]{Bern13a}) we obtain that $t \to \int_X \psi(c \dot u_t) \omega_{u_t}^n$ is constant for any $c \in \Bbb R_+$. By definition of the weak quasi-norms, this immediately gives that  $t \to \|\dot u_t\|_{\psi,u_t}$ is constant as well, hence in this case:
\begin{equation}\label{eq: d_psi_hat_def_length}
\hat d_\psi(u_0,u_1) = \|\dot u_t\|_{\psi,u_t}, \ \  \textup{ for any } t  \in [0,1].
\end{equation}
\begin{equation}\label{eq: d_psi_def_length}
d_\psi(u_0,u_1) =  \int_X \psi(\dot u_t) \omega_{u_t}^n,   \ \ \textup{ for any } t  \in [0,1].
\end{equation}
Though we will not need it, by \cite[Lemma 4.10]{Da15} the same formulas hold in case $u_0,u_1 \in \mathcal H^\Delta_\omega$ as well. However in this case one has to clarify what $\dot u_t$ means for $t \in (0,1)$. As follows from \cite[Lemma 4.10]{Da15} (and its proof), $\partial_t^+ u_t = \partial_t^- u_t$ a.e with respect to $\omega_{u_t}^n$. As a result, $\dot u_t$ is a.e. well defined with respect to $\omega_{u_t}^n$, allowing to make sense of the right hand side of  \eqref{eq: d_psi_hat_def_length} and \eqref{eq: d_psi_def_length} in this more general situation as well.

First we prove an approximation result for the above introduced notions:

\begin{proposition}\label{prop: pushforw_approx} Let $u_0,u_1 \in \mathcal H_\omega^\Delta$ and $u^k_0,u^k_1  \in \mathcal H^\Delta_\omega$ such that $u_0^k \to u_0$ and $u_1^k \to u_1$ uniformly. Then we have that $d_\psi(u^k_0,u^k_1) \to d_\psi(u_0,u_1)$ and $\hat d_\psi(u^k_0,u^k_1) \to \hat d_\psi(u_0,u_1)$.
\end{proposition}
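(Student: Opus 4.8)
The plan is to reduce both claimed convergences to a single statement: for every $c \ge 0$,
\begin{equation*}
\int_X \psi(c\,\dot u_0^k)\,\omega_{u_0^k}^n \longrightarrow \int_X \psi(c\,\dot u_0)\,\omega_{u_0}^n. \tag{$\star$}
\end{equation*}
Indeed, $d_\psi(u_0^k,u_1^k)$ is precisely the $c=1$ instance of $(\star)$, while $\hat d_\psi(u_0^k,u_1^k)=\|\dot u_0^k\|_{\psi,u_0^k}$ converges to $\|\dot u_0\|_{\psi,u_0}=\hat d_\psi(u_0,u_1)$ by Lemma \ref{lem: convergence} applied with $f_k=\dot u_0^k$, $\mu_k=\omega_{u_0^k}^n$ (all functions are uniformly bounded, by the standard $t$-Lipschitz estimate $\|\dot u_t\|_\infty\le\|u_0-u_1\|_\infty$ for weak geodesics, since $\|u_1^k-u_0^k\|_\infty$ stays bounded). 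So the whole proposition is $(\star)$. Before attacking it I would record two standing facts: by the maximum principle for \eqref{bvp_Bern}, comparing $u_t^k$ with the vertical translates $u_t\pm\epsilon_k$ of the limiting geodesic where $\epsilon_k:=\max(\|u_0^k-u_0\|_\infty,\|u_1^k-u_1\|_\infty)$, the geodesics converge uniformly, $\sup_{[0,1]\times X}|u_t^k-u_t|\le\epsilon_k\to0$; and since the potentials are bounded and converge uniformly, Bedford--Taylor stability gives $\omega_{u_0^k}^n\to\omega_{u_0}^n$ weakly.

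The naive route --- prove $\dot u_0^k\to\dot u_0$ and pass to the limit --- is exactly where the difficulty lies, and it fails. Uniform convergence of the endpoints does not control the endpoint $t$-derivatives of the geodesics (the difference quotients obey only $|g_h^k-g_h|\le 2\epsilon_k/h$, useless as $h\to0$), so $\dot u_0^k\to\dot u_0$ is not available; and even were it, one would still face passing to the limit in $\int f_k\,d\mu_k$ with $f_k$ converging only weakly against weakly convergent $\mu_k$. The main obstacle is thus to interchange the double limit $k\to\infty$ and $h\to0$ hidden in $\dot u_0=\lim_{h\to0}g_h$, where $g_h:=(u_h-u_0)/h$.

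To resolve this I would interpose the difference quotients as an intermediate family. Set $A_k(h):=\int_X\psi(c\,g_h^k)\,\omega_{u_0^k}^n$ and $A(h):=\int_X\psi(c\,g_h)\,\omega_{u_0}^n$, and write $L$ for the right side of $(\star)$. Three controls then close the argument. First, for fixed $h>0$ one has $g_h^k\to g_h$ uniformly, so Lemma \ref{lem: convergence2} gives $A_k(h)\to A(h)$; moreover $A(h)\to L$ as $h\to0$ by dominated convergence, since $t$-convexity of $t\mapsto u_t$ forces $g_h\searrow\dot u_0$ pointwise. Second, the endpoint error is controlled by a single first moment: $t$-convexity gives $g_h^k\ge\dot u_0^k$, and since $\psi$ is its own concave modulus of continuity (from evenness and concavity, $|\psi(x)-\psi(y)|\le\psi(|x-y|)$), Jensen's inequality for the probability measure $\tfrac1V\omega_{u_0^k}^n$, with $V=\int_X\omega^n$, yields $|A_k(h)-A_k(0)|\le V\psi\big(\tfrac{c}{V}a_k(h)\big)$ where $a_k(h):=\int_X(g_h^k-\dot u_0^k)\,\omega_{u_0^k}^n\ge0$. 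Third, this first moment converges: $\int_X g_h^k\,\omega_{u_0^k}^n\to\int_X g_h\,\omega_{u_0}^n$ for fixed $h$, while $\int_X\dot u_0^k\,\omega_{u_0^k}^n=V\big(E(u_1^k)-E(u_0^k)\big)\to V\big(E(u_1)-E(u_0)\big)=\int_X\dot u_0\,\omega_{u_0}^n$ because the Monge--Amp\`ere energy $E$ is affine along weak geodesics and continuous under uniform convergence; hence $a_k(h)\to a(h):=\int_X(g_h-\dot u_0)\,\omega_{u_0}^n$ for each $h$, and $a(h)\to0$ as $h\to0$.

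Assembling these, given $\eps>0$ I would first choose $h$ small so that both $|A(h)-L|$ and $V\psi\big(\tfrac cV a(h)\big)$ are tiny, and then send $k\to\infty$ for this fixed $h$, using the first and third controls. The split $|A_k(0)-L|\le|A_k(0)-A_k(h)|+|A_k(h)-A(h)|+|A(h)-L|$, in which the first term is bounded by $V\psi\big(\tfrac cV a_k(h)\big)$, becomes arbitrarily small, proving $(\star)$ and hence the proposition. The only delicate points are the Jensen estimate via $\psi$-as-its-own-modulus and the energy identity for the first moment; everything else is an application of Lemmas \ref{lem: convergence} and \ref{lem: convergence2} together with the uniform convergence of the geodesics.
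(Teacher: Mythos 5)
Your proof is correct, but it follows a genuinely different route from the paper's. The paper also reduces everything to the single statement $(\star)$ (and then invokes Lemma \ref{lem: convergence} exactly as you do for $\hat d_\psi$), but it establishes $(\star)$ by proving weak convergence of the push-forward measures $|\dot u^k_0|_* \omega_{u^k_0}^n \to |\dot u_0|_*\omega_{u_0}^n$: it imports from \cite{Da15} the continuity of the $L^p$ Finsler metrics $d_p$, $p \geq 1$, under uniform convergence (via the double estimate of \cite[Theorem 3]{Da15} and the triangle inequality for $d_p$), identifies $d_p(u^k_0,u^k_1)^p = \int_X |\dot u^k_0|^p\omega_{u^k_0}^n$ by \cite[Lemma 4.11]{Da15}, and then uses uniform boundedness of $\dot u^k_0$ (\cite[Theorem 1]{Da17}) together with Stone--Weierstrass to upgrade convergence of all $p$-moments to weak convergence of push-forwards. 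You instead interpose the difference quotients $g_h^k$, control $|A_k(h)-A_k(0)|$ by the first moment $a_k(h)$ via subadditivity of $\psi$ (Lemma \ref{lem: sublinear}, giving $|\psi(x)-\psi(y)|\le\psi(|x-y|)$) and Jensen's inequality for the concave weight, and then control that first moment by the affinity of the Monge--Amp\`ere energy along weak geodesics. What the paper's argument buys is brevity, at the price of black-boxing the $L^p$ metric geometry of \cite{Da15}; what yours buys is a more self-contained and elementary argument (maximum principle, Bedford--Taylor stability, convexity in $t$, Jensen), entirely bypassing the Finsler-metric machinery. The one step you should anchor with a citation is the endpoint identity $\int_X \dot u_0\,\omega_{u_0}^n = V\big(E(u_1)-E(u_0)\big)$ and the affinity of $E$ along weak geodesics of bounded potentials; this is standard (it follows, e.g., from \cite[Lemma 4.10]{Da15} applied with the weight $f(t)=t$, or see \cite{Da19}), but it is not proved in the present paper, so it must be quoted rather than asserted. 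Otherwise every ingredient you use (Lemmas \ref{lem: sublinear}, \ref{lem: convergence}, \ref{lem: convergence2}) appears in the paper before Proposition \ref{prop: pushforw_approx}, so there is no circularity.
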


\begin{proof} First we show that $d_\psi(u^k_0,u^k_1) \to d_\psi(u_0,u_1)$.
Let $[0,1] \ni t \to  u_t,u^k_t \in \mathcal H_\omega^\Delta$ be the weak geodesic joining $u_0,u_1$ and $u_0^k,u^k_1$ respectively. We first claim that the push-forward measures $ |{\dot {u}}^k_0|_* \omega_{u^k_0}^n$ weakly converge to $|\dot {u_0}|_* \omega_{u_0}^n$. 

Assuming the claim, since ${\dot {u}}^k_0, \dot {u_0}$ are uniformly bounded \cite[Theorem 1]{Da17}, we can apply this to $\psi$ to arrive at the conclusion:
$$d_\psi(u^k_0,u^k_1)  = \int_X \psi(\dot u^k_0) \omega_{u^k_0}^n=\int_X \psi(|\dot u^k_0|) \omega_{u^k_0}^n \to  \int_X \psi(|\dot u_0|) \omega^n_{u_0} = d_\psi(u_0,u_1).$$

Now we prove the claim. From \cite[Theorem 3]{Da15} and the triangle ineqality for $d_p$ we know that $d_p(u^k_0,u^k_1) \to d_p(u_0,u_1)$ for all $p \geq 1$. By \cite[Lemma 4.11]{Da15} this is equivalent with $\int_X |\dot u^k_0|^p \omega_{u^k_0}^n \to  \int_X |\dot u_0|^p \omega^n_{u_0}$. 

Since the global masses of the pushforward measures $ |\dot u^k_0|_* \omega_{u^k_0}^n, \  |\dot {u_0}|_* \omega_{u^k_0}^n$ are finite, and $\dot u^k_0,\dot u_0$ are uniformly bounded, the Stone-Weierstrass theorem implies that $\int_X \alpha(|\dot u^k_0|) \omega_{u_0^k}^n \to \int_X \alpha(|\dot u_0|) \omega_{u_0}^n$ for any $\alpha \in C(\Bbb R)$. This is equivalent with $ |\dot u^k_0|_* \omega_{u^k_0}^n \to |\dot {u_0}|_* \omega_{u_0}^n$, as desired.

We can repeat the above for $\psi(c t)$ instead of $\psi(t)$ for any $c \in [0,\infty)$, and conclude that $\hat d_\psi(u^k_0,u^k_1) \to \hat d_\psi(u_0,u_1)$ via Lemma \ref{lem: convergence}. 
\end{proof}

Next, we point out that an analogue of the Pythagorean identity holds for $d_\psi$:

\begin{lemma}\label{lem: Pythagorean} Let $\psi \in \mathcal W^-$ and $u,v \in \mathcal H_\omega^\Delta$. Then for $P(u,v) \in \mathcal H_\omega^\Delta$ we have that
$$d_\psi(u,v) = d_\psi(u,P(u,v)) + d_\psi(v,P(u,v)).$$
\end{lemma}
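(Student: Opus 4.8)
The plan is to reduce the identity to two weight-independent structural facts about Mabuchi geodesics and then assemble them using only that $\psi$ is even with $\psi(0)=0$. The guiding picture is the toric computation recorded before the statement: there $\mathcal L(P(u,v))=\max(\mathcal L u,\mathcal L v)$, and the identity collapses to the trivial splitting $\int_P\psi(f-g)\,d\mu=\int_{\{f<g\}}\psi(g-f)\,d\mu+\int_{\{f>g\}}\psi(f-g)\,d\mu$. Set $\phi=P(u,v)$, which lies in $\mathcal H_\omega^\Delta$ by \cite[Theorem 2.5]{DR16}. Write $a_t$ for the weak geodesic from $u$ to $\phi$, $b_t$ for the one from $v$ to $\phi$, and $c_t$ for the one from $u$ to $v$; by \cite[Theorem 1.1]{He15} all three stay in $\mathcal H_\omega^\Delta$, so the velocities below are defined $\omega^n_{(\cdot)}$-a.e. as in the discussion preceding the statement.

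The first ingredient, and the main obstacle, is the initial-velocity relation
\[
\dot a_0=\min(\dot c_0,0)\quad \omega_u^n\text{-a.e.}
\]
Infinitesimally this says that the geodesic leaving $u$ toward the envelope $P(u,v)$ moves only where the geodesic toward $v$ moves downward, and agrees with it there; in the toric model it is exactly $\mathcal L u-\max(\mathcal L u,\mathcal L v)=\min(\mathcal L u-\mathcal L v,0)$. I would prove it by $t$-convexity and the comparison principle, which already give $a_t\le\min(u,c_t)$, and then by sandwiching $a_t$ from below by the rooftop envelope on $S\times X$ of the lifts of $u$ and of the geodesic $c$; this barrier forces $\dot a_0=0$ on the contact set $\{u=\phi\}$ and forces $a_t$ to track $c_t$ on its complement. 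Pinning down the boundary trace of this product envelope at the $\phi$-endpoint, and justifying a.e. differentiation at $t=0$ against $\omega_u^n$, is the genuinely technical point. It is the low-energy analogue of the envelope estimates used for $d_p$ in \cite{Da15}, and because it is a pointwise/infinitesimal statement it is insensitive to $\psi$.

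The second ingredient is the invariance of the signed push-forward measure $(\dot c_t)_*\omega_{c_t}^n$ in $t\in[0,1]$, which is the content of \cite[Lemma 4.10]{Da15} (following \cite[Proposition 2.2]{Bern13a}); I will use it at the two endpoints as $(\dot c_0)_*\omega_u^n=(\dot c_1)_*\omega_v^n=:\nu$, where $\dot c_1$ is the left velocity at $v$. Granting the velocity relation, together with its $u\leftrightarrow v$ mirror applied to the reversed geodesic $\tilde c_s=c_{1-s}$ (for which $\dot{\tilde c}_0=-\dot c_1$), evenness of $\psi$ and $\psi(0)=0$ yield
\[
d_\psi(u,\phi)=\int_{\{\dot c_0<0\}}\psi(\dot c_0)\,\omega_u^n,\qquad d_\psi(v,\phi)=\int_{\{\dot c_1>0\}}\psi(\dot c_1)\,\omega_v^n.
\]

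Finally I assemble. Pushing forward by $\nu$ turns the second integral into $\int_{(0,\infty)}\psi\,d\nu=\int_{\{\dot c_0>0\}}\psi(\dot c_0)\,\omega_u^n$, while the first equals $\int_{(-\infty,0)}\psi\,d\nu$; since $\psi(0)=0$ the set $\{\dot c_0=0\}$ contributes nothing, so
\[
d_\psi(u,\phi)+d_\psi(v,\phi)=\int_{\mathbb{R}}\psi\,d\nu=\int_X\psi(\dot c_0)\,\omega_u^n=d_\psi(u,v),
\]
which is the claim. The remaining bookkeeping—extending the push-forward invariance up to $t=1$ and making sense of $\dot c_1$ a.e.—is handled exactly as in the $\mathcal H_\omega^\Delta$ discussion preceding the statement, and if one prefers to argue first for smooth endpoints one can reduce via Proposition \ref{prop: pushforw_approx}. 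I expect the whole argument to parallel the Pythagorean identity for $d_p$ in \cite{Da15}, with the velocity relation of the second paragraph as the sole nontrivial geometric input.
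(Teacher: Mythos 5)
Your route is, in substance, the paper's own route: the paper proves this lemma by a one-line citation of \cite[Proposition 4.13]{Da15} with $f:=\psi$, and that proposition is exactly the package you are reconstructing — the initial-velocity relation for geodesics into the rooftop envelope, combined with the $t$-invariance of the push-forward measure $(\dot c_t)_*\omega_{c_t}^n$ from \cite[Lemma 4.10]{Da15}, assembled using only that $\psi$ is even and $\psi(0)=0$. Your second and third paragraphs are correct as written: granting $\dot a_0=\min(\dot c_0,0)$ a.e.\ and its mirror for the reversed geodesic, the identities
$d_\psi(u,\phi)=\int_{\{\dot c_0<0\}}\psi(\dot c_0)\,\omega_u^n$ and $d_\psi(v,\phi)=\int_{\{\dot c_1>0\}}\psi(\dot c_1)\,\omega_v^n$ hold, and the push-forward invariance matches the second integral with $\int_{\{\dot c_0>0\}}\psi(\dot c_0)\,\omega_u^n$, giving the claim.

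The genuine gap is your first ingredient, and it is not peripheral: the a.e.\ identity $\dot a_0=\min(\dot c_0,0)$ \emph{is} the content of the cited proposition, and your sketch establishes only the easy half. The comparison principle does give $a_t\le\min(u,c_t)$ and hence $\dot a_0\le\min(\dot c_0,0)$; the difficulty is entirely in the lower bound. Your proposed lower barrier, the product rooftop envelope $P(\tilde u,\tilde c)$ on $S\times X$, does satisfy $P(\tilde u,\tilde c)(t,\cdot)\le a_t$ — wait, the inequality you need goes the other way: it is a \emph{minorant} of $a_t$ (after checking its boundary trace at $t=1$ is $\le\phi$), but it is useless until you can bound \emph{its} right derivative at $t=0$ from below by $\min(\dot c_0,0)$, and that is essentially equivalent in difficulty to the original claim; note that the naive candidate minorant $u+t\min(\dot c_0,0)=\min(u,u+t\dot c_0)$ is not $\omega$-psh, so no direct envelope comparison applies. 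The known argument requires two inputs your sketch never supplies: (i) a Kiselman-type theorem (cf.\ \cite{DR16}, used for exactly this purpose in \cite{Da15}) identifying the slice-wise curve $t\mapsto P(u,c_t)$ as a subgeodesic joining $u$ to $P(u,v)$, which converts your barrier into the usable bound $a_t\ge P(u,c_t)$; and (ii) the orthogonality relation for rooftop envelopes, namely that $\omega^n_{P(u,c_t)}$ charges only the contact sets $\{P(u,c_t)=u\}\cup\{P(u,c_t)=c_t\}$, which is what allows the trivial pointwise bounds on those two sets ($0$ on the first, $(c_t-u)/t\ge\dot c_0$ on the second) to be upgraded to an inequality a.e.\ with respect to $\omega_u^n$ in the limit $t\to 0$. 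Without these, ``forces $a_t$ to track $c_t$ on its complement'' is an assertion, not a proof. The efficient repair is the paper's own: \cite[Proposition 4.13]{Da15} is stated for weights $f$ general enough to include every $\psi\in\mathcal W^-$, so it can simply be quoted.
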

\begin{proof} This is a consequence of \cite[Proposition 4.13]{Da15} for $f := \psi$.
\end{proof}

Next we point out that the operator $u \to P(u,w)$ is $d_\psi$-shrinking:

\begin{proposition} \label{prop: contractive}  Let $\psi \in \mathcal W_-$ and $u,v,w \in \mathcal H_\omega^{\Delta}$. Then we have
$$d_\psi(P(u,w),P(v,w)) \leq d_\psi(u,v).$$
\end{proposition}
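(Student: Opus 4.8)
The plan is to reduce the inequality $d_\psi(P(u,w),P(v,w)) \leq d_\psi(u,v)$ to a comparison of geodesic initial velocities, exploiting the Pythagorean identity of Lemma~\ref{lem: Pythagorean} together with the monotonicity of the projection operator $P(\cdot,w)$. The key structural fact I would lean on is that $P(\cdot,w)$ is order-preserving and $P(u,w)\le u$, so the rooftop envelope compresses the ``spread'' between two potentials. First I would reduce to the case $u\ge v$: indeed, setting $m := P(u,v)$ (which lies in $\mathcal H_\omega^\Delta$ by \cite[Theorem 2.5]{DR16}), we have $m\le u$ and $m\le v$, and $P(m,w)=P(P(u,v),w)=P(u,P(v,w))=\ldots$ can be arranged so that both $P(u,w)$ and $P(v,w)$ dominate $P(m,w)$. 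Applying the Pythagorean identity twice, once to the pair $(P(u,w),P(v,w))$ through their rooftop and once to $(u,v)$ through $m$, it suffices to prove the contraction for each of the two comparable pairs $(u,m)$ and $(v,m)$ separately. This reduces everything to: if $u\ge v$ in $\mathcal H_\omega^\Delta$, then $d_\psi(P(u,w),P(v,w))\le d_\psi(u,v)$.

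For the comparable case $u\ge v$, both geodesics are ``aligned'': the geodesic from $v$ to $u$ has nonnegative initial velocity $\dot v_0\ge 0$, and likewise the geodesic from $P(v,w)$ to $P(u,w)$ has nonnegative initial velocity. The plan is to compare these two velocities measure-theoretically. On the contact set where $P(v,w)=v$ and $P(u,w)=u$, the initial velocities agree; on the set where the projection is ``active'' (pinned to $w$), the projected potentials move more slowly. I would make this precise using the description of $\dot u_0$ as the right $t$-derivative of the weak geodesic and the comparison-principle characterization of $P$: the measure $\omega_{P(u,w)}^n$ is concentrated, off the contact set, on $\{P(u,w)=w\}$, where the relevant velocity is suppressed. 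Since $\psi$ is increasing on $[0,\infty)$ and the projected velocity is pointwise dominated (in the appropriate pushforward sense) by $\dot v_0$, integrating $\psi$ against the respective Monge--Amp\`ere measures yields the desired inequality, using $d_\psi(u,v)=\int_X\psi(\dot v_0)\omega_v^n$ from \eqref{eq: d_psi_def_length}.

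The technically delicate step — and the one I expect to be the main obstacle — is controlling the Monge--Amp\`ere measures and velocities on the contact sets $\{P(u,w)=u\}$ versus $\{P(u,w)=w\}$ simultaneously for the two potentials. The geodesic velocities $\dot u_0$ are only defined as one-sided derivatives and are merely bounded (by \cite[Theorem 1]{Da17}), not continuous, so pointwise pushforward comparisons require care. I would handle this by approximation: choose smooth $u^k\to u$, $v^k\to v$ uniformly and invoke the approximation Proposition~\ref{prop: pushforw_approx} so that $d_\psi(P(u^k,w),P(v^k,w))\to d_\psi(P(u,w),P(v,w))$ and $d_\psi(u^k,v^k)\to d_\psi(u,v)$ (using that $P$ preserves uniform convergence), thereby transferring the inequality from a smoother setting. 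The crux is establishing the velocity domination cleanly, most likely by reducing to a Pythagorean/contact-set decomposition analogous to \cite[Proposition 4.13]{Da15} applied to the function $\psi$, so that the projection genuinely removes mass rather than increasing it.
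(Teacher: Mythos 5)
Your plan is correct and takes essentially the same route as the paper, whose entire proof is a citation of \cite[Proposition 8.2]{Da17b} with $|t|^2$ replaced by $\psi$: that argument is precisely your two steps, namely the reduction to the comparable case via the Pythagorean identity and the rooftop identity $P(P(u,w),P(v,w))=P(P(u,v),w)$ (your version correctly avoids the triangle inequality, which at this point of the paper is not yet available), followed, for $v\le u$, by comparing the geodesic $t\mapsto h_t$ joining $P(v,w),P(u,w)$ with the geodesic $t\mapsto v_t$ joining $v,u$: the comparison principle gives $h_t\le \min(v_t,w)$, hence $\dot h_0=0$ on $\{P(v,w)=w\}$ and $0\le \dot h_0\le \dot v_0$ on $\{P(v,w)=v\}$, and one concludes by integrating $\psi(\dot h_0)$ against $\omega_{P(v,w)}^n\le \mathbbm{1}_{\{P(v,w)=v\}}\omega_v^n+\mathbbm{1}_{\{P(v,w)=w\}}\omega_w^n$ (from \cite{DR16}) using $\psi(0)=0$ and monotonicity of $\psi$. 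The only slip in your sketch is the claim that the initial velocities \emph{agree} on the contact set — they need not, and only the domination $\dot h_0\le \dot v_0$ holds, which is all the argument requires; likewise the smoothing/pushforward technicalities you anticipate are unnecessary, since these pointwise inequalities for one-sided derivatives (which exist everywhere by $t$-convexity) suffice.
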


\begin{proof} The proof is exactly the same as that of \cite[Proposition 8.2]{Da17b}, where one replaces the convex weight $|t|^2$ with our weight $\psi \in \mathcal W^-$.
\end{proof}

Using the argument of \cite[Lemma 4.2]{Da15} we note the following lemma:

\begin{lemma}\label{lem: mon_triplet_ineq} Let $\alpha,\beta,\gamma \in {\mathcal H}_\omega^\Delta$ such that $\alpha \geq \beta \geq \gamma$. Then $d_\psi(\alpha,\beta) \leq d_\psi(\alpha,\gamma)$ and $\hat d_\psi(\alpha,\beta) \leq \hat d_\psi(\alpha,\gamma)$. Analogously, $d_\psi(\gamma,\beta) \leq d_\psi(\gamma,\alpha)$ and $\hat d_\psi(\gamma,\beta) \leq \hat d_\psi(\gamma,\alpha)$.
\end{lemma}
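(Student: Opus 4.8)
The plan is to reduce both inequalities to a single pointwise comparison of the initial velocities of two geodesics, computed against a common base measure. Let $[0,1]\ni t \to u_t \in \mathcal H_\omega^\Delta$ be the weak geodesic joining $\alpha=u_0$ and $\beta=u_1$, and let $[0,1]\ni t \to v_t \in \mathcal H_\omega^\Delta$ be the weak geodesic joining $\alpha=v_0$ and $\gamma=v_1$. By \eqref{eq: d_psi_def} and \eqref{eq: d_psi_hat_def} we have $d_\psi(\alpha,\beta)=\int_X \psi(\dot u_0)\omega_\alpha^n$, $d_\psi(\alpha,\gamma)=\int_X \psi(\dot v_0)\omega_\alpha^n$, and $\hat d_\psi(\alpha,\beta)=\|\dot u_0\|_{\psi,\alpha}$, $\hat d_\psi(\alpha,\gamma)=\|\dot v_0\|_{\psi,\alpha}$, where both velocities are right $t$-derivatives at $t=0$, taken against the \emph{same} measure $\omega_\alpha^n$ because $u_0=v_0=\alpha$.

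The key claim is that $0 \geq \dot u_0 \geq \dot v_0$, and hence $|\dot u_0|\leq|\dot v_0|$, $\omega_\alpha^n$-a.e. First, since $\gamma \leq \beta$, the two geodesics are ordered: $v_t \leq u_t$ for all $t\in[0,1]$. This is immediate from the envelope description \eqref{udef1}, as the admissible family of subgeodesics for the endpoints $(\alpha,\gamma)$ is contained in the one for $(\alpha,\beta)$, so the corresponding suprema are ordered. As $u_0=v_0=\alpha$, dividing $u_t-u_0 \geq v_t-v_0$ by $t>0$ and letting $t\to 0^+$ gives $\dot u_0 \geq \dot v_0$. Second, each geodesic is convex in $t$, so the right derivative at $0$ is bounded above by the secant slope to $t=1$: $\dot u_0 \leq u_1-u_0 = \beta-\alpha \leq 0$ and $\dot v_0 \leq \gamma-\alpha \leq 0$, using $\alpha \geq \beta$ and $\alpha \geq \gamma$. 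Combining these, $0 \geq \dot u_0 \geq \dot v_0$, which yields the pointwise bound $|\dot u_0|\leq|\dot v_0|$.

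Both conclusions now follow from this pointwise bound. Since $\psi$ is even and increasing on $[0,\infty)$, we get $\psi(\dot u_0)=\psi(|\dot u_0|)\leq \psi(|\dot v_0|)=\psi(\dot v_0)$, and integrating against $\omega_\alpha^n$ gives $d_\psi(\alpha,\beta)\leq d_\psi(\alpha,\gamma)$. For the weak quasi-norms, the same monotonicity of $\psi$ gives $\int_X \psi(\dot u_0/N)\omega_\alpha^n \leq \int_X \psi(\dot v_0/N)\omega_\alpha^n$ for every $N>0$, so the set of $N$ admissible in the definition \eqref{eq: quasi-norm-def} of $\|\dot v_0\|_{\psi,\alpha}$ is contained in that for $\|\dot u_0\|_{\psi,\alpha}$; taking infima yields $\hat d_\psi(\alpha,\beta)\leq \hat d_\psi(\alpha,\gamma)$.

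The step requiring the most care is the comparison $v_t \leq u_t$ together with the a.e.\ existence of the initial velocities. The ordering is an instance of the comparison principle for weak geodesics and is forced by the envelope \eqref{udef1}; the velocities exist as right derivatives by $t$-convexity and are defined $\omega_\alpha^n$-a.e., which is exactly what is needed to make the integrals meaningful and to pass the pointwise inequality through them. Everything else is elementary monotonicity of $\psi$.
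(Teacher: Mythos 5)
Your proof is correct and follows essentially the same route as the paper, which simply cites \cite[Lemma 4.2]{Da15} (with $\chi \in \mathcal W^+_p$ replaced by $\psi \in \mathcal W^-$): that argument is precisely your geodesic comparison via the envelope \eqref{udef1}, the $t$-convexity bounds $0 \geq \dot u_0 \geq \dot v_0$, and monotonicity of the even weight, which you have written out in full.
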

\begin{proof} Let $[0, 1] \ni t \to u_t,v_t \in \mathcal H_\omega^\Delta$ be the weak geodesics connecting $\alpha, \beta$ and $\alpha,\gamma$ respectively. We notice that they are both decreasing, satisfy $u_t \geq v_t$ by the comparison principle, and $u_0 = v_0=\alpha$. From this it follows that $0 \geq \dot u_0 \geq \dot v_0$. Using this, \eqref{eq: d_psi_def} and \eqref{eq: d_psi_hat_def} yield that $d_\psi(\alpha,\beta) \leq d_\psi(\alpha,\gamma)$ and $\hat d_\psi(\alpha,\beta) \leq \hat d_\psi(\alpha,\gamma)$. The last sentence is proved analogously, using two weak geodesics meeting at $\gamma$.
\end{proof}

\section{The triangle inequality}

First we obtain the triangle inequality for $d_\psi$ in a special case (Proposition \ref{prop: concavity}), and then derive the general version from this using the Pythagorean identity for $d_\psi$.

We start with the analogue of \cite[Lemma 5.2]{CC3} in our setting, that will only hold in the particular case of increasing smooth curves (c.f. \cite[Theorem 1.2]{Le20}).

\begin{proposition}\label{prop: lengthconcave} Let $\varepsilon>0$ and $[0,1] \ni s \to u_{0,s},u_{1,s} \in \mathcal H_\omega$ be smooth curves satisfying $\partial_s u_{0,s}>0,\partial_s u_{1,s}>0$. For fixed $s$, let $[0,1] \ni t \to u^\varepsilon_{t,s}\in \mathcal H_\omega$ be the $\varepsilon$-geodesic connecting $u_{0,s},u_{1,s}$. Then $t \to \int_0^1 \|\partial_s u^\varepsilon_{t,s}\|_{\psi,u^\varepsilon_{t,s}}ds $ is concave.
\end{proposition}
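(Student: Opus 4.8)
The plan is to reduce everything to the pointwise second-order information already extracted in Proposition~\ref{prop: OrliczNormSecondDiff}. For a fixed $s$, the curve $t \mapsto u^\varepsilon_{t,s}$ is an $\varepsilon$-geodesic, so the hypotheses of Proposition~\ref{prop: OrliczNormSecondDiff} are met with the two-parameter family $u(s,t) := u^\varepsilon_{t,s}$, provided I verify that $\partial_s u^\varepsilon_{t,s} > 0$ holds not just at the endpoints $t=0,1$ (where it is assumed) but for all $t \in [0,1]$. This positivity is the natural maximum-principle consequence of the $\varepsilon$-geodesic equation: since $\partial_s u$ satisfies the linearized equation along the $\varepsilon$-geodesic (a second-order elliptic equation on the strip $S \times X$) with positive boundary data at $t=0,1$, the minimum principle forces $\partial_s u^\varepsilon_{t,s} > 0$ throughout. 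I would record this as the first step, citing the standard linearization of \eqref{eq: epsgeodeq} as in \cite{Ch00,CC3}.

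Granting positivity, Proposition~\ref{prop: OrliczNormSecondDiff} gives the explicit formula \eqref{eq: OrliczNormSecondDiffEq} for $\partial_t^2 \|\partial_s u^\varepsilon_{t,s}\|_{\psi,u}$. The crucial observation is the sign: since $\psi \in \mathcal W^-$ is concave on $(0,\infty)$, we have $\psi'' \le 0$ there, while $\psi' > 0$. The denominator $\int_X \psi'(\eta)\eta\, \omega_u^n$ is therefore positive (recall $\eta = \partial_s u / \|\partial_s u\|_{\psi,u} > 0$). In the numerator, every one of the three terms multiplying $\psi''(\eta)$ is manifestly nonnegative: $\|\partial_s u\|_{\psi,u}(\nabla_{\partial_t u}\eta)^2 \ge 0$, the Poisson-bracket square term $\{\partial_s u, \partial_t u\}_{\omega_u}^2 / \|\partial_s u\|_{\psi,u} \ge 0$, and the gradient-norm term $\varepsilon \langle \nabla_{\omega_u}\partial_s u, \nabla_{\omega_u}\partial_s u\rangle (\omega^n/\omega_u^n) / \|\partial_s u\|_{\psi,u} \ge 0$. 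Multiplying this nonnegative quantity by $\psi''(\eta) \le 0$ makes the numerator $\le 0$, so $\partial_t^2 \|\partial_s u^\varepsilon_{t,s}\|_{\psi,u} \le 0$. Thus $t \mapsto \|\partial_s u^\varepsilon_{t,s}\|_{\psi,u^\varepsilon_{t,s}}$ is concave for each fixed $s$, which is precisely the final assertion of Proposition~\ref{prop: OrliczNormSecondDiff}.

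The final step is to integrate in $s$. Concavity is preserved under integration against the nonnegative measure $ds$ on $[0,1]$: if each $t \mapsto g_s(t) := \|\partial_s u^\varepsilon_{t,s}\|_{\psi,u^\varepsilon_{t,s}}$ is concave, then $t \mapsto \int_0^1 g_s(t)\, ds$ is concave as well, since the defining two-point inequality integrates termwise. To invoke this legitimately I should confirm enough joint smoothness of $(s,t) \mapsto u^\varepsilon_{t,s}$ to guarantee that $g_s(t)$ is jointly measurable and that the integral is finite and differentiable in $t$; this follows from smooth dependence of $\varepsilon$-geodesics on their smoothly-varying boundary data, together with the smoothness of $t \mapsto \|f_t\|_{\psi,u_t}$ noted before Proposition~\ref{prop: OrliczNormDiff}. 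I expect the main obstacle to be the first step rather than the last: establishing the strict positivity $\partial_s u^\varepsilon_{t,s} > 0$ for interior $t$, which is what licenses the use of the second-variation formula and its concavity conclusion. Everything downstream is then a direct sign inspection of \eqref{eq: OrliczNormSecondDiffEq} followed by integration in $s$.
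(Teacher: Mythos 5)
Your proposal is correct and follows essentially the same route as the paper: establish interior positivity $\partial_s u^\varepsilon_{t,s} \geq \delta > 0$ from the positivity of the boundary data, invoke the concavity conclusion of Proposition~\ref{prop: OrliczNormSecondDiff} (the sign inspection of \eqref{eq: OrliczNormSecondDiffEq} you carry out is exactly how that conclusion is obtained there), and integrate in $s$. The only cosmetic difference is in the positivity step, where you linearize \eqref{eq: epsgeodeq} and apply the minimum principle, while the paper cites the comparison-principle argument from the proof of \cite[Corollary 3.4]{Da19}; these are equivalent standard arguments for the same fact.
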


\begin{proof} By assumption, $\partial_s u_{0,s},\partial_s u_{1,s} \geq \delta>0$ for some constant $\delta$. By the proof of \cite[Corollary 3.4]{Da19} we get that $\partial_s u^\varepsilon_{t,s} \geq \delta>0$ for any $t,s \in [0,1]$  and $\varepsilon >0$. In particular, $ s \to \| \partial_s u \|_{\psi, u}$ is  smooth and the results of Section 2.3 are applicable. In particular, since $\psi|_{[0,\infty)}$ is concave, Proposition \ref{prop: OrliczNormSecondDiff} gives:

$$\frac{d^2}{dt^2 }\int_0^1 \|\partial_s u^\varepsilon_{t,s}\|_{\psi,u_{t,s}}  ds = \int_0^1 \frac{d^2}{dt^2 }  \|\partial_s u^\varepsilon_{t,s}\|_{\psi,u_{t,s}}  ds \leq 0.$$
This is equivalent to concavity of $t \to \int_0^1 \|\partial_s u^\varepsilon_{t,s}\|_{\psi,u^\varepsilon_{t,s}} ds $.
\end{proof}

\begin{proposition}\label{prop: maxilengthgeod}Let $u_0,u_1 \in \mathcal H_\omega$ with $u_0 < u_1$. Let $[0,1] \ni s \to u_s \in \mathcal H_\omega^\Delta$ be the (increasing) weak geodesic joining $u_0,u_1$. Then $\hat d_\psi(u_0,u_1)=\int_0^1  \|\dot u_s\|_{\psi,u_s}   ds \geq \int_0^1 \|\dot \zeta_s \|_{\psi,\zeta_s}  ds$, where $t \to \zeta_t$ is any smooth increasing curve $(\dot \zeta_s >0)$ joining $\zeta_0 := u_0$ and $\zeta_1 := u_1$.
\end{proposition}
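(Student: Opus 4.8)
The plan is to show that the weak geodesic is the \emph{longest} increasing curve, with the concavity of Proposition~\ref{prop: lengthconcave} playing exactly the role that convexity plays in the high energy setting (where geodesics are the shortest). The equality $\hat d_\psi(u_0,u_1)=\int_0^1 \|\dot u_s\|_{\psi,u_s}\,ds$ is immediate from \eqref{eq: d_psi_hat_def_length}: the integrand $\|\dot u_s\|_{\psi,u_s}$ is constant in $s$ and equals $\hat d_\psi(u_0,u_1)$. So the whole content is the inequality against a competitor $\zeta$, which I will obtain by interpolating transversally between the geodesic and the competitor.

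First I set up the interpolation. Since $u_0<u_1$, for each $\varepsilon>0$ the $\varepsilon$-geodesic $[0,1]\ni s\to u^\varepsilon_s$ joining $u_0,u_1$ is smooth and strictly increasing ($\partial_s u^\varepsilon_s>0$, as in the proof of Proposition~\ref{prop: lengthconcave}), and $u^\varepsilon_s\to u_s$ uniformly as $\varepsilon\to0$. For fixed $s$, let $[0,1]\ni t\to v^\varepsilon_{t,s}$ be the $\varepsilon$-geodesic with $v^\varepsilon_{0,s}=u^\varepsilon_s$ and $v^\varepsilon_{1,s}=\zeta_s$. As both endpoint curves $s\to u^\varepsilon_s$ and $s\to\zeta_s$ are smooth and strictly increasing, Proposition~\ref{prop: lengthconcave} applies, so $F_\varepsilon(t):=\int_0^1\|\partial_s v^\varepsilon_{t,s}\|_{\psi,v^\varepsilon_{t,s}}\,ds$ is concave on $[0,1]$. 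By construction $F_\varepsilon(1)=\int_0^1\|\dot\zeta_s\|_{\psi,\zeta_s}\,ds$ exactly, while $F_\varepsilon(0)=\int_0^1\|\partial_s u^\varepsilon_s\|_{\psi,u^\varepsilon_s}\,ds\to\hat d_\psi(u_0,u_1)$ as $\varepsilon\to0$ by the uniform convergence $u^\varepsilon_s\to u_s$ and Proposition~\ref{prop: pushforw_approx}. Concavity of $F_\varepsilon$ gives the tangent-line bound $F_\varepsilon(1)\le F_\varepsilon(0)+F_\varepsilon'(0)$, so it suffices to prove that $F_\varepsilon'(0)\to 0$.

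The crux is this first-variation estimate at the geodesic end. Differentiating under the integral and applying Proposition~\ref{prop: OrliczNormDiff} with time variable $t$ and field $\partial_s v^\varepsilon$, then using the symmetry $\nabla_{\partial_t v}\partial_s v=\nabla_{\partial_s v}\partial_t v$ of mixed covariant derivatives and integrating by parts in $s$, one finds that $F_\varepsilon'(0)$ equals a boundary contribution at $s=0,1$ (built from $\psi'(\eta)$ and the transverse velocity $\xi:=\partial_t v^\varepsilon_{t,s}\vert_{t=0}$) plus an interior term. The interior term is $O(\varepsilon)$: along $u^\varepsilon_s$ the equation \eqref{eq: epsgeodeq} forces $\nabla_{\partial_s u^\varepsilon}\partial_s u^\varepsilon\cdot\omega_{u^\varepsilon_s}^n=\varepsilon\omega^n$ and $\|\partial_s u^\varepsilon_s\|_{\psi,u^\varepsilon_s}$ is asymptotically constant in $s$. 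The boundary term vanishes in the limit: at $s=0$ (resp. $s=1$) the transverse $\varepsilon$-geodesic $t\to v^\varepsilon_{t,0}$ (resp. $t\to v^\varepsilon_{t,1}$) joins $u_0$ to $u_0$ (resp. $u_1$ to $u_1$), so its initial velocity $\xi$ tends to $0$ as $\varepsilon\to0$ by stability of $\varepsilon$-geodesics. Hence $F_\varepsilon'(0)\to0$, and letting $\varepsilon\to0$ in $F_\varepsilon(1)\le F_\varepsilon(0)+F_\varepsilon'(0)$ yields $\int_0^1\|\dot\zeta_s\|_{\psi,\zeta_s}\,ds\le\hat d_\psi(u_0,u_1)$.

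The main obstacle is precisely this last step: controlling $F_\varepsilon'(0)$ uniformly and showing that both its interior and its boundary contributions vanish as $\varepsilon\to0$. This requires the transverse-velocity estimate for $\varepsilon$-geodesics with coincident endpoints together with the asymptotic constancy of $\|\partial_s u^\varepsilon_s\|_{\psi,u^\varepsilon_s}$; the regularity gap between the weak geodesic $u_s\in\mathcal H_\omega^\Delta$ and the smooth $\varepsilon$-geodesics is absorbed by the uniform convergence $u^\varepsilon_s\to u_s$ and Proposition~\ref{prop: pushforw_approx}. A cleaner alternative, if one can justify concavity of $F$ at $\varepsilon=0$ by approximation, is to use genuine transverse geodesics, for which $v_{t,0}\equiv u_0$ and $v_{t,1}\equiv u_1$ are constant; then the boundary term vanishes identically, giving $F'(0)=0$ outright and the inequality at once.
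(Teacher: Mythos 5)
Your strategy is genuinely different from the paper's. The paper's proof (due to Lempert) contains no variational computation at all: using $u_1-u_0>\delta$ and the envelope property one gets $\dot u_s\geq\delta>0$, so $\psi$ only needs to be evaluated on a half-line $(\tau,\infty)$, where it is smooth and concave; one extends it to a concave function $\tilde\psi$ on all of $\Bbb R$, reparametrizes $\zeta$ to constant $\psi$-speed, and then applies the principle of least action \cite[Theorem 1.1]{Le20} to the \emph{convex} Lagrangian $t\mapsto-\tilde\psi(t/c)$: the weak geodesic maximizes the $\tilde\psi$-action among curves with the same endpoints, and the mean value theorem plus homogeneity and constancy of speed converts the action inequality into the length inequality. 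Your plan instead interpolates transversally between the geodesic and the competitor and uses the concavity of Proposition \ref{prop: lengthconcave} plus a first-variation estimate at the geodesic end; this is the same mechanism the paper uses to prove Corollary \ref{cor: concavity}, and it is a plausible (if much heavier) route to Proposition \ref{prop: maxilengthgeod} as well.

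The genuine gap is the step you yourself flag as the main obstacle: $F_\varepsilon'(0)\to0$ is asserted with a heuristic sketch, and the sketch hides exactly where the work lies. Concretely: (i) in your integration by parts in $s$ you ignore that the normalizing denominator $D_s=\int_X\psi'(\eta_s)\eta_s\,\omega^n_{u^\varepsilon_s}$ in Proposition \ref{prop: OrliczNormDiff} depends on $s$; differentiating it produces additional terms, which are only $O(\varepsilon)$ because the curve $s\mapsto u^\varepsilon_s$ satisfies \eqref{eq: epsgeodeq}, and $D_s$ must also be bounded below uniformly in $(\varepsilon,s)$. (ii) ``Stability of $\varepsilon$-geodesics'' is not a quantitative statement; you need a barrier/comparison argument showing the $\varepsilon$-geodesic joining a potential to itself has velocity $O(\varepsilon)$, and uniform bounds on the transverse velocities $\xi_s$ for the other terms. (iii) The interior terms involve $\psi''(\eta)$, and $\psi''$ may blow up at $0$, so you need $\eta$ bounded away from $0$ uniformly (this is where $\partial_s v^{\varepsilon}_{t,s}\geq\delta$ from the proof of Proposition \ref{prop: lengthconcave} must be invoked). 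Also, $F_\varepsilon(0)\to\hat d_\psi(u_0,u_1)$ does not follow from Proposition \ref{prop: pushforw_approx}, which compares $\hat d_\psi$ of endpoints, not $\psi$-lengths of $\varepsilon$-geodesics; you need the $C^{1,\alpha}$-convergence of $\varepsilon$-geodesics together with Lemma \ref{lem: convergence2}, as in the proof of Corollary \ref{cor: concavity}. I believe these estimates can all be carried out, so your approach is workable, but as written it is a plan rather than a proof. Finally, your ``cleaner alternative'' fails as stated: with $\varepsilon=0$ the transverse weak geodesics are only $C^{1,\bar1}$, the smooth calculus behind Propositions \ref{prop: OrliczNormDiff}, \ref{prop: OrliczNormSecondDiff} and \ref{prop: lengthconcave} does not apply, and $F$ need not be differentiable at $t=0$, so ``$F'(0)=0$ outright'' is unjustified.
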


The following argument is due to L. Lempert.

\begin{proof}  Let $\delta>0$ such that $u_1 - u_0 > \delta$ and $\dot \zeta_s >\delta$ for all $s \in [0,1]$. 
From \eqref{udef1} we obtain that $u_t \geq u_0 + \delta t$. Since $t \to u_t$ is $t$-convex, we obtain that $\dot u_t \geq \dot u_0 \geq \delta$.

By \eqref{eq: d_psi_hat_def_length} we know that $s \to \| \dot u_s\|_{\psi,u_s}$ is constant equal to $c>0$. Since $\psi$ is concave and smooth on $({\delta}/{2c},\infty)$, it admits a concave extension $\tilde \psi$ to $(-\infty,\infty)$ such that $\tilde \psi|_{({\delta}/{2c},\infty)} = \psi|_{({\delta}/{2c},\infty)}$. Such extension of course is non-unique.

As $\dot \zeta_s>0$, Proposition \ref{prop: OrliczNormDiff} implies that $s \to \| \dot \zeta_s \|_{\psi,\zeta_s}$ is smooth. Since weak quasi-norms are homogeneous, it is possible to reparametrize $[0,1] \ni s \to \zeta_s \in \mathcal H_\omega$ to a smooth curve $[0,1] \ni s \to \tilde \zeta_s \in \mathcal H_\omega$ such that $s \to \| \dot {\tilde \zeta}_s \|_{\psi,\zeta_s}$ is constant and the $\psi$-arclength does not change:

$$\int_0^1 \|\dot \zeta_s \|_{\psi,\zeta_s}  ds=\int_0^1 \|\dot { \tilde \zeta}_s \|_{\psi,\tilde \zeta_s}  ds=\|\dot {\tilde \zeta}_l \|_{\psi,\tilde \zeta_l}, \ \ l \in [0,1].$$
Since $t \to -\tilde \psi(t/c)$ is convex, \cite[Theorem 1.1]{Le20} implies that 
\begin{flalign*}
1 = \int_0^1\int_X \psi\Big(\frac{\dot u_s}{c}\Big) \omega_{u_s}^n ds  &= \int_0^1\int_X \tilde \psi\Big(\frac{\dot u_s}{c}\Big) \omega_{u_s}^n ds \\
&\geq \int_0^1\int_X \tilde \psi \Big(\frac{\dot {\tilde \zeta}_s}{c} \Big) \omega_{\zeta_s}^n ds=\int_0^1\int_X  \psi \Big(\frac{\dot {\tilde \zeta}_s}{c} \Big) \omega_{\zeta_s}^n ds.
\end{flalign*}

In particular, by the mean value theorem, we obtain that $\int_X  \psi(\frac{\dot {\tilde \zeta}_t}{c} ) \omega_{\tilde \zeta_t}^n  \leq 1$ for some $t \in [0,1]$. By the definition of the weak quasi-norm, we get that $\|{\dot {\tilde \zeta}}_t \|_{\psi,\tilde{\zeta}_t} \leq c.$ But since $s \to \|\dot {\tilde{\zeta}}_s \|_{\psi,\tilde{\zeta}_s}$ is constant, we actually get that $\|\dot {\tilde{\zeta}}_s \|_{\psi,\tilde{\zeta}_s} \leq c=\| \dot u_s\|_{\psi,u_s}$ for all $s \in [0,1]$. Integrating this inequality on $[0,1]$ yields the desired estimate.
\end{proof}

As a corollary of the above two results, we obtain the following:

\begin{corollary}\label{cor: concavity} Suppose we are given $\alpha, \beta, \gamma \in \mathcal H_\omega^\Delta$ such that $\alpha \geq  \beta \geq \gamma$. Let $[0,1] \ni t \to \alpha_t,\gamma_t \in \mathcal H_\omega^\Delta$ be the weak geodesic joining $\alpha_0:= \beta,\alpha_1:= \alpha$ and $\gamma_0:= \beta,\gamma_1:=\gamma$
 respectively. Then the function $t \to \hat d_\psi(\alpha_t,\gamma_t)$ is concave.
\end{corollary}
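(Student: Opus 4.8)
The plan is to combine Corollary's two ingredients---the concavity result (Proposition \ref{prop: lengthconcave}) and Lempert's maximizing-length result (Proposition \ref{prop: maxilengthgeod})---by realizing $\hat d_\psi(\alpha_t,\gamma_t)$ as a pointwise value of a concave function built from $\varepsilon$-geodesics, and then passing to the limit $\varepsilon\to 0$. First I would set up the relevant two-parameter family. For fixed $t$, the curve $s \to \zeta^t_s$ joining $\gamma_t$ to $\alpha_t$ (say, the weak geodesic, or any convenient increasing smooth reparametrization) has the property that $\alpha_t \geq \gamma_t$ since the geodesics $\alpha_\bullet$ and $\gamma_\bullet$ start at the common point $\beta$ and, by the comparison principle together with $\alpha \geq \beta \geq \gamma$, stay ordered $\alpha_t \geq \beta \geq \gamma_t$ for all $t$. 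Thus $\hat d_\psi(\gamma_t,\alpha_t)$ is the $\psi$-length of the increasing weak geodesic in the $s$-direction.

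The heart of the argument is to express $t \to \hat d_\psi(\alpha_t,\gamma_t)$ as a limit of the concave functions supplied by Proposition \ref{prop: lengthconcave}. I would proceed as follows. Fix smooth increasing curves $s\to u_{0,s}$ and $s \to u_{1,s}$ approximating the data: here the role of $u_{0,s}, u_{1,s}$ in Proposition \ref{prop: lengthconcave} is played by the endpoints of the $s$-geodesic, so I want $u^\varepsilon_{t,s}$ to be the $\varepsilon$-geodesic in the $s$-direction connecting $\gamma_t$ to $\alpha_t$. For this I need $\gamma_t, \alpha_t$ to themselves be arranged into smooth curves in $t$ that are also amenable to the $s$-construction; the clean way is to take $\alpha_t,\gamma_t$ to be the weak geodesics (which lie in $\mathcal H_\omega^\Delta$ by He's theorem), approximate them uniformly by $\varepsilon$-geodesics in the $t$-direction, and let $u^\varepsilon_{t,s}$ be the $\varepsilon$-geodesic in the $s$-direction between the two $t$-$\varepsilon$-geodesics. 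By Proposition \ref{prop: lengthconcave}, the function $t \to \int_0^1 \|\partial_s u^\varepsilon_{t,s}\|_{\psi,u^\varepsilon_{t,s}}\,ds$ is concave for each fixed small $\varepsilon$.

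Next I would identify the $\varepsilon \to 0$ limit. For each fixed $t$, as $\varepsilon \to 0$ the $s$-$\varepsilon$-geodesic $u^\varepsilon_{t,\cdot}$ converges uniformly to the $s$-weak geodesic between $\gamma_t$ and $\alpha_t$ (by Chen's approximation). I would then argue that $\int_0^1 \|\partial_s u^\varepsilon_{t,s}\|_{\psi,u^\varepsilon_{t,s}}\,ds \to \hat d_\psi(\gamma_t,\alpha_t)$. Here I invoke Proposition \ref{prop: maxilengthgeod}: the weak geodesic \emph{maximizes} the $\psi$-arclength among increasing smooth curves, so $\hat d_\psi(\gamma_t,\alpha_t) \geq \int_0^1 \|\partial_s u^\varepsilon_{t,s}\|_{\psi,u^\varepsilon_{t,s}}\,ds$, giving the limit superior the right upper bound; for the matching lower bound I would use the uniform convergence of the $\varepsilon$-geodesics together with the convergence lemmas (Lemma \ref{lem: convergence}, Lemma \ref{lem: convergence2}) and the constancy \eqref{eq: d_psi_hat_def_length} of $s \to \|\dot u_s\|_{\psi,u_s}$ along the true weak geodesic. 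This pins the pointwise limit to $\hat d_\psi(\alpha_t,\gamma_t)$, and a pointwise limit of concave functions is concave, completing the proof.

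The main obstacle I anticipate is the careful interchange of the two geodesic approximations and the two limits: one must make sure the $\varepsilon$-geodesics in the $s$-direction stay strictly increasing ($\partial_s u^\varepsilon_{t,s} > 0$) uniformly so that Proposition \ref{prop: OrliczNormSecondDiff} applies, and that the family $u^\varepsilon_{t,s}$ is jointly smooth enough in $(t,s)$ to legitimize differentiating under the integral in Proposition \ref{prop: lengthconcave}. The strict positivity is exactly the point where the hypothesis $\alpha \geq \beta \geq \gamma$ and the argument in \cite[Corollary 3.4]{Da19} (already used in Proposition \ref{prop: maxilengthgeod}) do the work, providing a uniform lower bound $\partial_s u^\varepsilon_{t,s} \geq \delta > 0$. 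The secondary subtlety is ensuring the $\varepsilon \to 0$ limit is genuinely $\hat d_\psi(\alpha_t,\gamma_t)$ and not merely a lower bound; this is where Proposition \ref{prop: maxilengthgeod} is indispensable, since without the maximizing property the approximating lengths could fall short and the limit could be strictly smaller.
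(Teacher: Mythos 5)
Your overall strategy (realize $\hat d_\psi(\alpha_t,\gamma_t)$ as a pointwise limit of concave functions built from $\varepsilon$-geodesics and pass to the limit) sounds close to the paper, and your limit identification via Proposition \ref{prop: maxilengthgeod}, Lemma \ref{lem: convergence2} and \eqref{eq: d_psi_hat_def_length} is in line with what the paper does. However, your central step is a misapplication of Proposition \ref{prop: lengthconcave}. You define $u^\varepsilon_{t,s}$, for each \emph{fixed} $t$, as the $\varepsilon$-geodesic \emph{in the $s$-direction} joining $\gamma^\varepsilon_t$ to $\alpha^\varepsilon_t$, and then claim that $t \mapsto \int_0^1\|\partial_s u^\varepsilon_{t,s}\|_{\psi,u^\varepsilon_{t,s}}\,ds$ is concave by that proposition. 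But Proposition \ref{prop: lengthconcave} (through the second variation formula of Proposition \ref{prop: OrliczNormSecondDiff}) gives concavity only \emph{along the direction in which the curves solve the $\varepsilon$-geodesic equation}: in its notation, for each fixed $s$ the curve $t\mapsto u^\varepsilon_{t,s}$ must be an $\varepsilon$-geodesic, and then the $s$-length is concave in $t$. In your configuration the roles are crossed: the geodesics run in $s$, so for fixed $s$ the curve $t\mapsto u^\varepsilon_{t,s}$ is merely the family of points at parameter $s$ on the various $s$-geodesics, not an $\varepsilon$-geodesic, and the second $t$-derivative of $\|\partial_s u\|$ picks up terms involving $\nabla_{\partial_t u}\partial_t u$ with no definite sign. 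What your variable assignment actually yields is concavity in $s$ of $\int_0^1\|\partial_t u^\varepsilon_{t,s}\|\,dt$, which is not the statement you need. (There is also a smaller mismatch: the proposition requires both boundary curves to be increasing, while $t\mapsto\gamma_t$ is decreasing.)

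The paper's proof supplies exactly the structure your proposal is missing. Fix $t,t'$ and let $s\mapsto v^{\varepsilon,\varepsilon',t}_s$ and $s\mapsto v^{\varepsilon,\varepsilon',t'}_s$ be increasing $\varepsilon'$-geodesics joining $\gamma^\varepsilon_t$ to $\alpha^\varepsilon_t$, respectively $\gamma^\varepsilon_{t'}$ to $\alpha^\varepsilon_{t'}$. Then, for each fixed $s$, join $v^{\varepsilon,\varepsilon',t}_s$ to $v^{\varepsilon,\varepsilon',t'}_s$ by an $\varepsilon$-geodesic in a new interpolation parameter $\lambda$; now the geodesics do run in the concavity direction, and $\partial_s\eta^{\varepsilon',\varepsilon}>0$ holds by monotone dependence of $\varepsilon$-geodesics on their endpoints. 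The key point for which your argument has no substitute is that at $s=0$ and $s=1$ these $\lambda$-geodesics coincide with segments of $\gamma^\varepsilon$ and $\alpha^\varepsilon$ (because those are themselves $\varepsilon$-geodesics in $t$), so that $\eta^{\varepsilon',\varepsilon}(\lambda,0)=\gamma^\varepsilon_{(1-\lambda)t+\lambda t'}$ and $\eta^{\varepsilon',\varepsilon}(\lambda,1)=\alpha^\varepsilon_{(1-\lambda)t+\lambda t'}$. Applying Proposition \ref{prop: lengthconcave} in $\lambda$ and then Proposition \ref{prop: maxilengthgeod} to dominate $\int_0^1\|\partial_s\eta^{\varepsilon',\varepsilon}(\lambda,s)\|\,ds$ by $\hat d_\psi\big(\alpha^\varepsilon_{(1-\lambda)t+\lambda t'},\gamma^\varepsilon_{(1-\lambda)t+\lambda t'}\big)$ produces the midpoint concavity inequality directly, after letting $\varepsilon'\to0$ and then $\varepsilon\to0$. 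Without introducing these interpolating $\varepsilon$-geodesics between the parameters $t$ and $t'$, concavity in $t$ cannot be extracted from the stated propositions, so as written your proof does not go through.
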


\begin{proof} Using Proposition \ref{prop: pushforw_approx} and \cite{De94}, we can assume that $\alpha,\beta,\gamma \in \mathcal H_\omega$ and $\alpha> \beta > \gamma$. 

As in the first step of the proof of Proposition \ref{prop: maxilengthgeod}, there exists $\delta >0$ such that $\dot \alpha_t > \delta$ and  $\dot \gamma_t <-\delta$ for all $t \in [0,1]$. 
Let $[0,1]  \ni t \to \alpha^\varepsilon_t,\gamma^\varepsilon_t \in \mathcal H_\omega$ be the $\varepsilon$-geodesic joining $\alpha^\varepsilon_0:= \beta,\alpha^\varepsilon_1:= \alpha$ and $\gamma^\varepsilon_0:= \beta,\gamma^\varepsilon_1:=\gamma$
 respectively. As $\varepsilon-$geodesics converge to weak geodesics in the $C^{1,\alpha}$-topology, for small enough $\varepsilon$, we also have $\dot \alpha^\varepsilon_t>\delta$ and $ \dot \gamma^\varepsilon_t <-\delta$. In particular, $t \to \alpha^\varepsilon_t$ is strictly increasing and $t \to \gamma^\varepsilon_t$ is strictly decreasing

Let $t,t' \in (0,1]$. Now let $\varepsilon'>0$ be small enough so that both $\varepsilon'$-geodesics $[0,1] \ni s \to v^{\varepsilon,\varepsilon',t}_s,v^{\varepsilon,\varepsilon',t'}_s \in \mathcal H_\omega$ joining $v^{\varepsilon,\varepsilon',t}_0 := \gamma^\varepsilon_t$ and $v^{\varepsilon,\varepsilon',t}_1 := \alpha^\varepsilon_t$, respectively $v^{\varepsilon,\varepsilon',t'}_0 := \gamma^\varepsilon_{t'}$ and $v^{\varepsilon,\varepsilon',t'}_1 := \alpha^\varepsilon_{t'}$ are strictly increasing (i.e. $\partial_s v^{\varepsilon,\varepsilon',t}_s,\partial_s v^{\varepsilon,\varepsilon',t'}_s>0$).

For $s \in [0,1]$ fixed, let  $[0,1] \ni \lambda \to \eta^{\varepsilon',\varepsilon}(\lambda,s)$ be the $\varepsilon$-geodesic joining $\eta^{\varepsilon',\varepsilon}(0,s):=v^{\varepsilon,\varepsilon',t}_s$ and $\eta^{\varepsilon,\varepsilon',\varepsilon}(1,s):=v^{\varepsilon,\varepsilon',t'}_s$. Notice that $\eta^{\varepsilon',\varepsilon}(\lambda,1) = \alpha^\varepsilon_{(1-\lambda) t + \lambda t'} $ and $\eta^{\varepsilon',\varepsilon}(\lambda,0) = \gamma^\varepsilon_{(1-\lambda) t + \lambda t'}$.

We fix $\lambda \in [0,1]$. Combining previous results we can finish the proof:
\begin{flalign*}
(1-\lambda)& \hat d_\psi(\alpha_t,\gamma_t) + \lambda \hat d_\psi(\alpha_{t'},\gamma_{t'}) = \lim_{\varepsilon \to 0 } \big( (1-\lambda)  \hat d_\psi (\alpha^\varepsilon_t,\gamma^\varepsilon_t) + \lambda \hat d_\psi(\alpha^\varepsilon_{t'},\gamma^\varepsilon_{t'}) \big)\\
&=\lim_{\varepsilon \to 0} \lim_{\varepsilon' \to 0}\bigg(\int_0^1 \Big( (1-\lambda) \|\partial_s v^{\varepsilon,\varepsilon',t}\|_{\psi,v^{\varepsilon,\varepsilon'}}    \omega_{v^{\varepsilon,\varepsilon',t}}^n+ \lambda \|\partial_s v^{{\varepsilon,\varepsilon'},t'}\|_{\psi,v^{\varepsilon,\varepsilon'}} \Big) ds\bigg) \\
&\leq \lim_{\varepsilon \to 0} \int_0^1  \|\partial_s \eta^{\varepsilon',\varepsilon}((1-\lambda) t + \lambda t',s)\|_{\psi, \eta^{\varepsilon',\varepsilon}((1-\lambda) t + \lambda t',s)}     ds \\
&\leq  \lim_{\varepsilon \to 0} \hat d_\psi(\alpha^\varepsilon_{(1-\lambda)t + \lambda t'},\gamma^\varepsilon_{(1-\lambda)t + \lambda t'}) \\
&= \hat d_\psi(\alpha_{(1-\lambda)t + \lambda t'},\gamma_{(1-\lambda)t + \lambda t'}),
\end{flalign*}
where in the first line we have used Proposition \ref{prop: pushforw_approx}, in the second line we have used \eqref{eq: d_psi_hat_def_length} and Lemma \ref{lem: convergence2}, in the third line we have used Proposition \ref{prop: lengthconcave}, in the fourth line we have used Proposition \ref{prop: maxilengthgeod}, and in the last line we have used Proposition \ref{prop: pushforw_approx} again.
\end{proof}

\begin{proposition}\label{prop: concavity} Given $\alpha, \beta, \gamma \in \mathcal H_\omega^\Delta$ such that $\alpha \geq \beta \geq \gamma$, we have that 

\begin{equation}\label{eq: triangle_ineq_special}
d_\psi(\alpha,\gamma) \leq d_\psi(\alpha,\beta) + d_\psi(\beta,\gamma).
\end{equation}

\end{proposition}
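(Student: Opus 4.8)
The plan is to deduce \eqref{eq: triangle_ineq_special} from the concavity statement of Corollary~\ref{cor: concavity} by passing to a suitably \emph{rescaled} weight, exploiting precisely the non-scale-invariance of the weak quasi-norm \eqref{eq: quasi-norm-def} flagged in Section~2.3. Write $A := d_\psi(\alpha,\beta)$ and $B := d_\psi(\beta,\gamma)$, and assume $A+B>0$ (the degenerate case is handled by running the argument with $\psi/(A+B+\varepsilon)$ and letting $\varepsilon \searrow 0$). Set $\tilde\psi := \psi/(A+B)$, which again lies in $\mathcal W^-$, and let $[0,1]\ni s \to \zeta_s$ be the weak geodesic from $\alpha$ to $\gamma$. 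Because the quasi-norm is not scale invariant, $\hat d_{\tilde\psi}(\alpha,\gamma) = \|\dot\zeta_0\|_{\tilde\psi,\alpha} \le 1$ is equivalent, by the definition \eqref{eq: quasi-norm-def} of $\|\cdot\|_{\tilde\psi,\alpha}$, to $\int_X \tilde\psi(\dot\zeta_0)\,\omega_\alpha^n \le 1$, i.e. to $d_\psi(\alpha,\gamma) \le A+B$. Thus it suffices to prove $\hat d_{\tilde\psi}(\alpha,\gamma)\le 1$.

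Next I would set up the concavity input. Let $[0,1]\ni t \to \alpha_t$ and $[0,1]\ni t \to \gamma_t$ be the weak geodesics joining $\alpha_0 = \beta, \alpha_1 = \alpha$ and $\gamma_0 = \beta, \gamma_1 = \gamma$; as in the proof of Corollary~\ref{cor: concavity} these are increasing and decreasing respectively, so $\dot\alpha_0 \ge 0 \ge \dot\gamma_0$. Applying Corollary~\ref{cor: concavity} \emph{with the weight $\tilde\psi$}, the function $g(t) := \hat d_{\tilde\psi}(\alpha_t,\gamma_t)$ is concave on $[0,1]$, with $g(0) = \hat d_{\tilde\psi}(\beta,\beta) = 0$. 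For a concave function vanishing at $0$ the difference quotient $g(t)/t$ is nonincreasing, so
\begin{equation*}
\hat d_{\tilde\psi}(\alpha,\gamma) = g(1) \le \lim_{t\searrow 0}\frac{g(t)}{t}.
\end{equation*}

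The main step — and what I expect to be the principal obstacle — is to identify this limit as $\|\dot\alpha_0 - \dot\gamma_0\|_{\tilde\psi,\beta}$. As $t\searrow 0$ both $\alpha_t$ and $\gamma_t$ converge uniformly to $\beta$, with $\alpha_t - \gamma_t = t(\dot\alpha_0 - \dot\gamma_0) + o(t)$, so the weak geodesic joining the nearby potentials $\gamma_t,\alpha_t$ should have initial velocity asymptotic to $t(\dot\alpha_0 - \dot\gamma_0)$; by homogeneity of the quasi-norm together with the convergence $\omega_{\gamma_t}^n \to \omega_\beta^n$ (passing to the limit via Proposition~\ref{prop: pushforw_approx} and Lemma~\ref{lem: convergence}) one then gets $g(t)/t \to \|\dot\alpha_0 - \dot\gamma_0\|_{\tilde\psi,\beta}$. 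The hard part will be justifying this rescaled-velocity limit rigorously, since it is the one place where the lack of $C^1$-regularity of weak geodesics in the $t$-direction must be confronted; I would control it by approximating with the $\varepsilon$-geodesics used in Corollary~\ref{cor: concavity} and invoking the convergence lemmas of Section~2.3.

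Finally I would close the argument using subadditivity and the chosen scaling. Since $\dot\alpha_0 \ge 0 \ge \dot\gamma_0$ we have $\dot\alpha_0 - \dot\gamma_0 = |\dot\alpha_0| + |\dot\gamma_0|$, so Lemma~\ref{lem: sublinear} and the evenness of $\tilde\psi$ give
\begin{equation*}
\int_X \tilde\psi\big(\dot\alpha_0 - \dot\gamma_0\big)\,\omega_\beta^n \le \int_X \tilde\psi(\dot\alpha_0)\,\omega_\beta^n + \int_X \tilde\psi(\dot\gamma_0)\,\omega_\beta^n = \frac{A}{A+B} + \frac{B}{A+B} = 1,
\end{equation*}
where I used that, by \eqref{eq: d_psi_def_length} and evenness of $\psi$, $\int_X \psi(\dot\alpha_0)\omega_\beta^n = d_\psi(\alpha,\beta) = A$ and likewise for $B$. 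By the definition \eqref{eq: quasi-norm-def} this reads $\|\dot\alpha_0 - \dot\gamma_0\|_{\tilde\psi,\beta}\le 1$, which combined with the previous two paragraphs yields $\hat d_{\tilde\psi}(\alpha,\gamma) \le 1$, i.e. $d_\psi(\alpha,\gamma)\le A+B = d_\psi(\alpha,\beta) + d_\psi(\beta,\gamma)$. The passage to $\tilde\psi$ is exactly where the deliberate failure of scale invariance is exploited, which is the last step anticipated in the Section~2.3 remark.
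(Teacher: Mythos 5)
Your proposal follows the same skeleton as the paper's proof: concavity of $t \mapsto \hat d(\alpha_t,\gamma_t)$ from Corollary \ref{cor: concavity}, the vanishing of this function at $t=0$ and the resulting monotone difference quotient, a bound of the $t\searrow 0$ limit by the quasi-norm of $\dot\alpha_0-\dot\gamma_0$ at $\beta$, and finally subadditivity (Lemma \ref{lem: sublinear}) combined with the deliberate non-scale-invariance of \eqref{eq: quasi-norm-def}. Your only structural deviation is cosmetic: you rescale the weight upfront by $A+B$ so that the subadditivity computation ends with a quasi-norm bounded by $1$, whereas the paper rescales at the very end by $\int_X \psi(\dot\eta_0)\,\omega_\alpha^n$ to convert the quasi-norm inequality $\|\dot\eta_0\|_{\psi,\alpha}\le\|\dot u_0-\dot v_0\|_{\psi,\beta}$ (valid for every weight in $\mathcal W^-$) into an integral inequality; these are equivalent maneuvers.

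The genuine gap is exactly at the step you flag as the ``principal obstacle'': bounding $\lim_{t\searrow 0} g(t)/t$ by $\|\dot\alpha_0-\dot\gamma_0\|_{\tilde\psi,\beta}$. Your sketch --- that the weak geodesic joining $\gamma_t$ to $\alpha_t$ has initial velocity asymptotic to $t(\dot\alpha_0-\dot\gamma_0)$ --- is an assertion about stability of initial velocities of weak geodesics under perturbation of both endpoints, and neither the $\varepsilon$-geodesic approximation of Corollary \ref{cor: concavity} nor the convergence lemmas of Section 2.3 deliver such a statement (those lemmas compare integrals against varying measures; they say nothing about the velocity of the geodesic between two nearby potentials). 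In fact no velocity asymptotics are needed: the missing ingredient, and what the paper uses at this point, is the inequality $\hat d_\psi(u,v)\le \|u-v\|_{\psi,v}$ for $u,v\in\mathcal H^\Delta_\omega$, i.e.\ the $\psi$-version of \cite[Lemma 4.1]{Da15}, whose proof carries over verbatim to concave weights. Granting it, homogeneity of the quasi-norm gives $g(t)/t \le \big\|(\alpha_t-\gamma_t)/t\big\|_{\tilde\psi,\gamma_t}$, and since $(\alpha_t-\gamma_t)/t \to \dot\alpha_0-\dot\gamma_0$ uniformly while $\omega_{\gamma_t}^n\to\omega_\beta^n$ weakly, Lemma \ref{lem: convergence2} yields $\limsup_{t\searrow 0} g(t)/t \le \|\dot\alpha_0-\dot\gamma_0\|_{\tilde\psi,\beta}$. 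Note also that only this upper bound is needed, not the equality you assert. The remainder of your argument --- the equivalence $\hat d_{\tilde\psi}(\alpha,\gamma)\le 1 \iff d_\psi(\alpha,\gamma)\le A+B$, the signs $\dot\alpha_0\ge 0\ge\dot\gamma_0$, and the computation $\int_X\tilde\psi(\dot\alpha_0)\,\omega_\beta^n+\int_X\tilde\psi(\dot\gamma_0)\,\omega_\beta^n=1$ via \eqref{eq: d_psi_def_length} --- is correct as written, so once you prove or import the distance-versus-difference bound, your proof closes and coincides in substance with the paper's.
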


\begin{proof} Using Proposition \ref{prop: pushforw_approx}, we can assume that $\alpha,\beta,\gamma$ are smooth K\"ahler potentials, moreover $\alpha > \beta> \gamma$.

Let $[0,1] \ni t \to u_t,v_t \in \mathcal H_\omega^\Delta$ be the weak geodesics connecting $u_0: = \beta$ and $u_1 := \alpha$, respectively $v_0: = \beta$ and $v_1 := \gamma$. By Corollary \ref{cor: concavity}  we get that $t \to \hat d_\psi(u_t,v_t)$ is concave. Hence, since $\hat d_\psi(u_0,v_0) = 0$, $t \to \hat d_\psi(u_t,v_t)/t$ is decreasing.

Let $[0,1] \ni t \to \eta_t$ be the weak geodesic connecting $\eta_0 = \alpha$ and $\eta_1 = \gamma$. We can use the $\psi$-version of \cite[Lemma 4.1]{Da15} (whose proof is identical) to write:
\begin{flalign*}
\|\dot \eta_0  \|_{\psi,\alpha} &= \hat d_\psi(\alpha,\gamma)  = \hat d_\psi(u_1,v_1)\leq \lim_{t \to 0} \frac{\hat d_\psi(u_t,v_t)}t \leq  \limsup_{t \to 0} \frac{\|u_t - v_t\|_{\psi,v_t}}{t} =\\ 
& = \limsup_{t \to 0} \Big\| \frac{u_t - v_t}{t}\Big\|_{\psi,v_t}  =  \|  \dot u_0 - \dot v_0\|_{\psi,\beta}, 
\end{flalign*}
where  in the last step we have used that $\omega_{v_t}^n \to \omega_{\beta}^n$ weakly, moreover $(u_t-v_t)/t \to \dot u_0 - \dot v_0$ uniformly, as $t \to 0$. Indeed, this allows an application of  Lemma \ref{lem: convergence2} to conclude.

Finally, if we replace $\psi(t)$ with $\tilde \psi(t) := \psi(t) / \int_X \psi(\dot \eta_0) \omega_\alpha^n \in \mathcal W^-$, the same inequality as above implies that, $1 = \|\dot \eta_0  \|_{\tilde \psi,\alpha} \leq \|\dot u_0 - \dot v_0  \|_{\tilde \psi,\beta} $, i.e., $\int_X \tilde \psi(\dot u_0 - \dot v_0) \omega_\beta^n \geq 1$, i.e.,  $\int_X {\psi(\dot u_0 - \dot v_0)} \omega_\beta^n \geq \int_X \psi(\dot \eta_0) \omega_\alpha^n$. Using Lemma \ref{lem: sublinear} we now conclude that 
\begin{flalign*}
d_\psi(\alpha,\beta) + d_\psi(\beta,\gamma) &= \int_X \psi(\dot u_0) \omega_\beta^n + \int_X \psi(\dot v_0) \omega_\beta^n \\
&\geq \int_X \psi(\dot u_0-\dot v_0) \omega_\beta^n\geq \int_X \psi(\dot \eta_0) \omega_\alpha^n = d_\psi(\alpha,\gamma).
\end{flalign*}
\end{proof}

We are ready to prove the general case of the triangle inequality.

\begin{theorem}\label{thm: triangle_ineq} Let $u,v,w \in \mathcal H_\omega^\Delta$. Then we have
\begin{equation}\label{eq: triangle_ineq}
d_\psi(u,w) \leq d_\psi(u,v) + d_\psi(v,w).
\end{equation}
\end{theorem}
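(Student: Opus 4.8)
The plan is to reduce the general triangle inequality to the monotone special case already established in Proposition~\ref{prop: concavity}, using the rooftop operator $P$ as the main tool. The point is that, although $u,v,w$ are arbitrary, the potentials $P(u,v)$, $P(v,w)$ and $P(u,v,w):=P(P(u,v),w)$ are automatically comparable to the original ones, so the Pythagorean identity (Lemma~\ref{lem: Pythagorean}), the contraction property (Proposition~\ref{prop: contractive}) and the monotonicity estimate (Lemma~\ref{lem: mon_triplet_ineq}) can be combined to funnel the estimate into the ordered configuration handled by Proposition~\ref{prop: concavity}. Throughout I will use that $d_\psi$ is symmetric (which follows from evenness of $\psi$ together with the fact that $\int_X\psi(\dot u_t)\omega_{u_t}^n$ is constant along the geodesic) and that $P$ preserves $\mathcal H_\omega^\Delta$, so every quantity below is well defined.

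First I would dispose of the two \emph{comparable} base cases. If $v\le u$ and $v\le w$, set $c:=P(u,w)\ge v$; the Pythagorean identity gives $d_\psi(u,w)=d_\psi(u,c)+d_\psi(w,c)$, and since $u\ge c\ge v$ and $w\ge c\ge v$, two applications of Lemma~\ref{lem: mon_triplet_ineq} yield $d_\psi(u,c)\le d_\psi(u,v)$ and $d_\psi(w,c)\le d_\psi(w,v)$, whence the triangle inequality. If instead $v\ge u$ and $v\ge w$, then $P(v,w)=w$ and $P(v,u)=u$, so the contraction property applied twice gives $d_\psi(c,w)=d_\psi(P(u,w),P(v,w))\le d_\psi(u,v)$ and $d_\psi(c,u)=d_\psi(P(w,u),P(v,u))\le d_\psi(w,v)$; adding these and using the same Pythagorean splitting of $d_\psi(u,w)$ finishes this case.

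For the general case I would introduce $a:=P(u,v)$, $b:=P(v,w)$ and $m:=P(u,v,w)$, and first record the envelope identities $P(a,b)=m$, $a\ge m$, $b\ge m$, which follow from the description of $P$ as a largest minorant. Since $m\le u$ and $m\le w$, the first base case applied to $(u,m,w)$ gives $d_\psi(u,w)\le d_\psi(u,m)+d_\psi(w,m)$. Because $u\ge a\ge m$ and $w\ge b\ge m$ are ordered triples, Proposition~\ref{prop: concavity} bounds $d_\psi(u,m)\le d_\psi(u,a)+d_\psi(a,m)$ and $d_\psi(w,m)\le d_\psi(w,b)+d_\psi(b,m)$. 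Next, the Pythagorean identity with $P(a,b)=m$ gives $d_\psi(a,m)+d_\psi(b,m)=d_\psi(a,b)$, while the second base case applied to $(a,v,b)$ (valid since $v\ge a$, $v\ge b$) bounds $d_\psi(a,b)\le d_\psi(v,a)+d_\psi(v,b)$. Chaining these and regrouping by a final use of the Pythagorean identity I obtain
\begin{equation*}
d_\psi(u,w)\le \big(d_\psi(u,a)+d_\psi(v,a)\big)+\big(d_\psi(w,b)+d_\psi(v,b)\big)=d_\psi(u,v)+d_\psi(v,w),
\end{equation*}
as desired.

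The genuine analytic difficulty has already been absorbed into Proposition~\ref{prop: concavity} (the concavity of the geodesic $\psi$-arclength, resting on Lempert's inequality); the present argument is purely structural. Accordingly, the step I would be most careful about is the bookkeeping: verifying the envelope identities $P(P(u,v),w)=P(u,v,w)$ and $P(P(u,v),P(v,w))=P(u,v,w)$, and checking that the two comparable base cases are oriented so that the contraction and monotonicity estimates point in the direction needed. Once those are pinned down, the reduction to the monotone case is exactly the chain displayed above.
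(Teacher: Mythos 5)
Your proposal is correct and is essentially the paper's own argument: it uses the same four ingredients (the Pythagorean identity of Lemma \ref{lem: Pythagorean}, the contraction property of Proposition \ref{prop: contractive}, the monotonicity Lemma \ref{lem: mon_triplet_ineq}, and the ordered-triple case of Proposition \ref{prop: concavity}) applied to the same envelopes $P(u,v)$, $P(v,w)$, $P(u,v,w)$, $P(u,w)$. Your repackaging into two comparable base cases is only cosmetic—when unfolded, it reproduces the paper's single chain of inequalities line by line (the extra Pythagorean splitting of $d_\psi(P(u,v),P(v,w))$ cancels against itself).
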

\begin{proof} The triangle inequality follows from the following sequence of inequalities:
\begin{flalign*}
&d_\psi(u,v) + d_\psi(v,w)  = d_\psi(u,P(u,v)) + d_\psi(P(u,v),v) + d_\psi(v,P(v,w)) + d_\psi(P(v,w),w)  \\
& \geq d_\psi(u,P(u,v)) + d_\psi(P(v,w),P(u,v,w)) + d_\psi(P(u,v),P(u,v,w)) + d_\psi(P(v,w),w)\\
& = d_\psi(u,P(u,v)) + d_\psi(P(u,v),P(u,v,w))  + d_\psi(w, P(v,w)) + d_\psi(P(v,w),P(u,v,w))\\
& \geq  d_\psi(u,P(u,v,w))  + d_\psi(w, P(u,v,w))\\
& \geq d_\psi(u,P(u,w))  + d_\psi(w, P(u,w))\\
&= d_\psi(u,w),
\end{flalign*}
where in the first and last line we have used the Pythagorean identity for $d_\psi$ (Lemma \ref{lem: Pythagorean}), in the second line we have used Proposition \ref{prop: contractive} for the second and third terms, in the fourth line we have used twice the particular case of the triangle inequality obtained in  Proposition \ref{prop: concavity}, and  in the fifth line we used Lemma \ref{lem: mon_triplet_ineq}.
\end{proof}

\begin{corollary}\label{cor: main_cor} $(\mathcal H^\Delta_\omega,d_\psi)$ is a metric space.
\end{corollary}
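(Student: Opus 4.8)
The final statement to prove is Corollary \ref{cor: main_cor}: that $(\mathcal H^\Delta_\omega, d_\psi)$ is a metric space. The plan is to verify the three defining properties of a metric, namely symmetry, the triangle inequality, and positive definiteness (including that $d_\psi$ takes finite, non-negative values). The triangle inequality is already established in full generality by Theorem \ref{thm: triangle_ineq}, so the corollary is essentially a packaging statement, and the remaining work lies in the other two axioms.

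First I would address well-definedness and non-negativity. For $u_0, u_1 \in \mathcal H_\omega^\Delta$, He's theorem guarantees the weak geodesic $t \to u_t$ stays in $\mathcal H_\omega^\Delta$, and by $t$-convexity the right derivative $\dot u_0$ exists; since $\psi \in \mathcal W^-$ takes values in $[0,\infty)$, the integral $\int_X \psi(\dot u_0)\omega_{u_0}^n$ is a well-defined element of $[0,\infty]$, and finiteness follows because $\dot u_0$ is bounded (as noted via \cite[Theorem 1]{Da17}) and $\omega_{u_0}^n$ has finite mass. Non-negativity is then immediate from $\psi \geq 0$.

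Next I would treat symmetry, $d_\psi(u_0,u_1) = d_\psi(u_1,u_0)$. Using \eqref{eq: d_psi_def_length}, the expression $d_\psi(u_0,u_1) = \int_X \psi(\dot u_t)\omega_{u_t}^n$ is constant along the geodesic; evaluating at $t=1$ and using that reversing the geodesic parametrization sends $\dot u_1$ to $-\dot u_1$ while $\psi$ is even gives $d_\psi(u_1,u_0) = \int_X \psi(-\dot u_1)\omega_{u_1}^n = \int_X \psi(\dot u_1)\omega_{u_1}^n = d_\psi(u_0,u_1)$.

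The main obstacle is the remaining half of positive definiteness: showing $d_\psi(u_0,u_1) = 0$ implies $u_0 = u_1$. Since $\psi(t) > 0$ for $t \neq 0$, vanishing of $\int_X \psi(\dot u_0)\omega_{u_0}^n$ forces $\dot u_0 = 0$ almost everywhere with respect to $\omega_{u_0}^n$. The delicate point is upgrading this to $u_0 = u_1$ genuinely, since $\omega_{u_0}^n$ may not see all of $X$ when potentials are only in $\mathcal H_\omega^\Delta$. The cleanest route is to reduce to the already-known metric $d_1$ (the case $\psi = \chi_1$, $p=1$): by the Pythagorean identity (Lemma \ref{lem: Pythagorean}) and monotonicity (Lemma \ref{lem: mon_triplet_ineq}) one reduces to the comparable situation $u_0 \geq u_1$, where $\dot u_0 \geq 0$ and the relation between $d_\psi$ and $d_1 = \int_X |\dot u_0|\omega_{u_0}^n$ can be controlled; vanishing of one forces vanishing of the other, and since $d_1$ is already known to be a genuine metric on $\mathcal H_\omega^\Delta$ by \cite{Da15}, we conclude $u_0 = u_1$. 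I would therefore write the proof as: symmetry and non-negativity are immediate from the evenness and positivity of $\psi$, the triangle inequality is Theorem \ref{thm: triangle_ineq}, and the non-degeneracy follows by comparison with the known metric $d_1$ after a Pythagorean reduction to the ordered case.
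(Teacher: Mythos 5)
Your proof is correct, but the non-degeneracy step takes a genuinely different route from the paper. The paper also reduces via the Pythagorean identity (Lemma \ref{lem: Pythagorean}) to the ordered pairs $P(u_0,u_1)\leq u_0$ and $P(u_0,u_1)\leq u_1$, but then concludes with its own estimate, namely the lower bound $\frac{1}{2^{n+1}}\int_X \psi(v-u)\,\omega_u^n \leq d_\psi(u,v)$ from Proposition \ref{prop: Mdist_est}, which gives $u_i = P(u_0,u_1)$ a.e.\ with respect to $\omega_{P(u_0,u_1)}^n$, and then invokes Dinew's domination principle \cite[Proposition 5.9]{BL12} to upgrade this to equality everywhere. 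You instead pass to the known metric $d_1$: vanishing of $\int_X \psi(\dot u_0)\,\omega_{u_0}^n$ forces $\dot u_0=0$ a.e.\ with respect to $\omega_{u_0}^n$, hence $d_1(u_0,u_1)=\int_X |\dot u_0|\,\omega_{u_0}^n=0$ along the same weak geodesic (the identification of $d_1$ with this expression on $\mathcal H_\omega^\Delta$ is \cite[Lemma 4.11]{Da15}, which the paper itself uses in Proposition \ref{prop: pushforw_approx}), and non-degeneracy of $d_1$ from \cite{Da15} finishes. This is legitimate precisely because elements of $\mathcal H_\omega^\Delta$ are bounded and so lie in $\mathcal E_1$; in fact, once you argue this way, your Pythagorean reduction to the ordered case is not even needed, since $\dot u_0=0$ a.e.-$\omega_{u_0}^n$ already gives $d_1(u_0,u_1)=0$ directly. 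The trade-off is worth noting: your argument is shorter and outsources the pluripotential-theoretic work to \cite{Da15}, but it does not survive the extension to $\mathcal E_\psi$ (Proposition \ref{prop: dpsi_nondegeneracy} repeats the paper's argument verbatim there), because $\mathcal E_1 \subset \mathcal E_\psi$ with strict inclusion, so $d_1$ is simply undefined on general elements of $\mathcal E_\psi$; the paper's route via Proposition \ref{prop: Mdist_est} and the domination principle is the one that scales. Your treatment of symmetry and finiteness (via evenness of $\psi$, constancy of $t\mapsto\int_X\psi(\dot u_t)\,\omega_{u_t}^n$, and boundedness of $\dot u_0$) is fine and is left implicit in the paper.
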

\begin{proof} By the previous result, we only need to argue that $d_\psi(u_0,u_1)=0$ implies $u_0 = u_1$. 

If $d_\psi(u_0,u_1)=0$, by Lemma \ref{lem: Pythagorean} we have $d_\psi(u_0,P(u_0,u_1))=0$ and also $d_\psi(u_1,P(u_0,u_1))=0$. By the first estimate of Proposition \ref{prop: Mdist_est} below, it follows that $u_0=P(u_0,u_1)$ a.e. with respect to $\o_{P(u_0,u_1)}^n$, and similarly, $u_1= P(u_0,u_1)$ a.e. with respect to $\o_{P(u_0,u_1)}^n$. We can now use  the domination principle of full mass potentials due to Dinew \cite[Proposition 5.9]{BL12} to obtain that $u_0 \leq P(u_0,u_1)$ and $u_1 \leq P(u_0,u_1)$. As the reverse inequalities are trivial, we get that $u_0 = P(u_0,u_1)=u_1$. 
\end{proof}

\begin{proposition}\label{prop: Mdist_est}Suppose $u,v \in \mathcal H^\Delta_\o$ with $u \leq v$. Then we have:
\begin{equation}\label{eq: Mdist_est}
\max\Big( \frac{1}{2^{n+1}}\int_X \psi(v-u) \o_u^n, \int_X \psi(v-u) \o_v^n \Big) \leq d_\psi(u,v) \leq \int_X\psi(v-u) \o_u^n.
\end{equation}
\end{proposition}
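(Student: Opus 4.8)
The plan is to work with the increasing weak geodesic $[0,1]\ni t\mapsto w_t\in\mathcal H_\omega^\Delta$ joining $w_0=u$ and $w_1=v$; it is increasing because $u\le v$ forces $w_t$ to dominate the constant-in-$t$ subgeodesic $u$, and it is $t$-convex by construction. Consequently $\dot w_0,\dot w_1\ge 0$, and by \eqref{eq: d_psi_def} together with the constancy formula \eqref{eq: d_psi_def_length} we have the two representations $d_\psi(u,v)=\int_X\psi(\dot w_0)\,\omega_u^n=\int_X\psi(\dot w_1)\,\omega_v^n$. The two easy bounds come directly from convexity of $t\mapsto w_t(x)$: since the difference quotient $(w_t-w_0)/t$ is nondecreasing, evaluating at $t=1$ gives $0\le\dot w_0\le v-u$, while the left slope at $t=1$ dominates the average slope, giving $\dot w_1\ge v-u\ge 0$. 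Using that $\psi$ is increasing on $[0,\infty)$, the first inequality yields the upper bound $d_\psi(u,v)=\int_X\psi(\dot w_0)\,\omega_u^n\le\int_X\psi(v-u)\,\omega_u^n$, and the second yields $d_\psi(u,v)=\int_X\psi(\dot w_1)\,\omega_v^n\ge\int_X\psi(v-u)\,\omega_v^n$. This already settles the upper bound and the $\omega_v^n$ term of the maximum.

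The remaining (and main) point is the $\omega_u^n$-lower bound $\int_X\psi(v-u)\,\omega_u^n\le 2^{n+1}d_\psi(u,v)$, for which I would introduce the midpoint potential $m:=\tfrac{u+v}{2}\in\mathcal H_\omega^\Delta$. It satisfies $v-u=2(m-u)\ge 0$ and, since $\omega_m=\tfrac12(\omega_u+\omega_v)\ge\tfrac12\omega_u$ as bounded $(1,1)$-forms, the pointwise density comparison $\omega_m^n\ge 2^{-n}\omega_u^n$. Combining the subadditivity $\psi(2t)\le 2\psi(t)$ (Lemma \ref{lem: sublinear}) with these two facts gives
\[
\int_X\psi(v-u)\,\omega_u^n=\int_X\psi\big(2(m-u)\big)\,\omega_u^n\le 2\int_X\psi(m-u)\,\omega_u^n\le 2^{n+1}\int_X\psi(m-u)\,\omega_m^n.
\]
Applying the $\omega_v^n$-type lower bound established above to the ordered pair $u\le m$ (whose ``top'' measure is $\omega_m^n$) bounds the last integral by $d_\psi(u,m)$, so it remains to check the monotonicity $d_\psi(u,m)\le d_\psi(u,v)$. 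I would obtain this directly from the envelope description \eqref{udef1}: every subgeodesic admissible for the geodesic $g$ joining $u,m$ is admissible for the geodesic $w$ joining $u,v$, since its right endpoint value need only drop below $m\le v$; hence $g_t\le w_t$ and therefore $0\le\dot g_0\le\dot w_0$, and integrating $\psi$ against $\omega_u^n$ gives $d_\psi(u,m)\le d_\psi(u,v)$. Chaining these inequalities yields exactly $\tfrac{1}{2^{n+1}}\int_X\psi(v-u)\,\omega_u^n\le d_\psi(u,v)$.

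I expect the $\omega_u^n$-lower bound to be the only real obstacle: the upper bound and the $\omega_v^n$-bound are immediate from convexity, whereas the $\omega_u^n$-bound must account for the fact that near the bottom endpoint the geodesic can move slowly ($\dot w_0$ small) even where the total gap $v-u$ is large, so no pointwise comparison of $\dot w_0$ with $v-u$ from below is available. The midpoint device circumvents this: the factor $2^{n}$ is produced by the Monge--Amp\`ere comparison $\omega_m^n\ge 2^{-n}\omega_u^n$ and the extra factor $2$ by the concavity of $\psi$. One subtlety worth flagging is that this Proposition precedes the triangle inequality (Theorem \ref{thm: triangle_ineq}) in the logical order, so the monotonicity $d_\psi(u,m)\le d_\psi(u,v)$ must be derived from the geodesic comparison above rather than from the triangle inequality; alternatively it follows from the fixed-endpoint monotonicity underlying Lemma \ref{lem: mon_triplet_ineq}.
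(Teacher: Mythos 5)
Your proposal is correct and follows essentially the same route as the paper: the endpoint-slope estimates $0\le\dot w_0\le v-u\le\dot w_1$ for the upper bound and the $\omega_v^n$-bound, then the midpoint $(u+v)/2$ combined with $\omega_u^n\le 2^n\omega_{(u+v)/2}^n$ and $\psi(2t)\le 2\psi(t)$ for the $\omega_u^n$-bound. The only cosmetic difference is that you prove the final monotonicity $d_\psi(u,\tfrac{u+v}{2})\le d_\psi(u,v)$ by comparing the envelopes directly, whereas the paper invokes Lemma \ref{lem: mon_triplet_ineq}; your comparison argument is exactly the content of that lemma, so the two proofs coincide in substance.
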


\begin{proof} Using Proposition \ref{prop: pushforw_approx} we can assume that $u$ and $v$ are smooth. Suppose $[0,1] \ni t \to w_t \in \mathcal H^\Delta_\o$ is the weak geodesic segment joining $w_0 = u$ and $w_1 = v$. By \eqref{eq: d_psi_def_length} we have
$$d_\psi(u,v)=\int_X \psi(\dot w_0) \o_u^n=\int_X \psi(\dot w_1) \o_v^n.$$ 
Since $u \leq v$, we have that $u \leq w_t$, as follows from the comparison principle. Since $(t,x) \to w_t(x)$ is convex in the $t$ variable, we get $0 \leq \dot w_0 \leq v-u \leq \dot w_1$, and together with the above identity we obtain part of \eqref{eq: Mdist_est}:
\begin{equation}\label{eq: Mdist_est_interm}
\int_X \psi(v-u) \o_v^n \leq d_\psi(u,v) \leq \int_X \psi(v-u) \o_u^n.
\end{equation}
Now we prove the rest of \eqref{eq: Mdist_est}. Using $ \o_{u}^n \leq 2^{n}\o_{(u+v)/2}^n$  and concavity of $\psi$ on $[0, \infty),$ we obtain that 
$$\frac{1}{2^{n+1}}\int_X \psi(v - u) \o_{u}^n \leq \int_X \psi\Big( \frac{u+v}{2} - u\Big)\o^n_{(u+v)/2}.$$
Since $u \leq (u+v)/2$, the first estimate of \eqref{eq: Mdist_est_interm} allows to write:
$$\frac{1}{2^{n+1}}\int_X \psi(v-u) \o_{u}^n \leq d_\psi \Big(\frac{u + v}{2},u\Big).$$
Finally, Lemma \ref{lem: mon_triplet_ineq} implies that $d_\psi((u + v)/2,u) \leq d_\psi(v,u)$, giving the remaining estimate in \eqref{eq: Mdist_est}.
\end{proof}

\section{Extending $d_\psi$ to $\mathcal E_\psi$ and completeness}

Given $u_0,u_1 \in \mathcal E_\psi(X,\o)$, by a classical result of Demailly \cite{De94} (see \cite{BK07} for a short argument) there exists decreasing sequences $u^k_0, u^k_1 \in \mathcal H_\o$ such that $u^k_0 \searrow u_0$ and $u^k_1 \searrow u_1$. We propose to extend $d_\psi$ to $\mathcal E_\psi$ in the following way:
\begin{equation}\label{eq: dpsi_def_general}
d_\psi(u_0,u_1)=\lim_{k\to \infty}d_\psi(u^k_0,u^k_1).
\end{equation}
Very similar to the high energy case \cite{Da15}, we will show that the limit on the right hand side exists and is independent of the approximating sequences. For this, we first prove the next lemma:

\begin{lemma} Suppose $u \in \mathcal E_\psi$ and $\{ u_k\}_{k} \subset \mathcal H^\Delta_\o$ is a sequence decreasing to $u$. Then $d_\psi(u_l,u_k) \to 0$ as $l,k \to \infty$.\label{lem: IntDistEst}
\end{lemma}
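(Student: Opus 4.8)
The plan is to show that the sequence $d_\psi(u_l, u_k)$ is a Cauchy-type quantity by reducing everything to the monotone situation, where Proposition \ref{prop: Mdist_est} gives explicit analytic control. Since $\{u_k\}$ is decreasing, for $l \leq k$ we have $u_l \geq u_k$, so I may apply the monotone estimate directly. First I would invoke the upper bound in \eqref{eq: Mdist_est} to write
\begin{equation*}
d_\psi(u_l,u_k) \leq \int_X \psi(u_l - u_k)\, \omega_{u_k}^n.
\end{equation*}
Thus it suffices to show the right hand side tends to $0$ as $l,k \to \infty$. The main obstacle is that the measures $\omega_{u_k}^n$ vary with $k$, so I cannot simply appeal to dominated convergence on a fixed measure space; I must combine weak convergence of the Monge--Amp\`ere measures (Proposition \ref{MA_cont}) with uniform energy control.

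The key analytic input is the monotonicity/fundamental estimate machinery of Section 2.1. Since $u \in \mathcal E_\psi$ and $u_k \searrow u$, the sequence $E_\psi(u_k)$ is controlled: by Proposition \ref{prop: Energy_est} and Corollary \ref{cor: Emonoton}, together with the continuity in Proposition \ref{E_semicont}, we have $\sup_k E_\psi(u_k) < \infty$ and indeed $E_\psi(u_k) \to E_\psi(u)$. The strategy I would follow mirrors the high-energy argument of \cite{Da15}: fix a large cutoff level and split the integral $\int_X \psi(u_l - u_k)\,\omega_{u_k}^n$ according to where $u_k$ (equivalently $u$) is very negative versus bounded. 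On the region where the potentials are uniformly bounded below, say $\{u > -h\}$, the integrand $\psi(u_l - u_k)$ is dominated and tends to $0$ pointwise as $l,k \to \infty$ (since $u_l, u_k \searrow u$ forces $u_l - u_k \to 0$ there), and the measures restricted to this region behave well; one concludes this piece goes to $0$. On the complementary pluripolar-tail region $\{u \leq -h\}$, I would use the finite-energy bound: because $\psi(u_l - u_k) \leq \psi(u_l - u_k)$ can be estimated via subadditivity (Lemma \ref{lem: sublinear}) by $\psi(|u_l|) + \psi(|u_k|)$, and the mass $\int_{\{u \leq -h\}} \psi(|u_k|)\,\omega_{u_k}^n$ is uniformly small for large $h$ by the uniform energy bound, this tail contribution is controlled uniformly in $k$.

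Making this split rigorous is where the technical work lies, and I expect the tail estimate on $\{u \leq -h\}$ to be the principal difficulty, since $\psi$ is only concave (subadditive) rather than convex, so the usual high-energy comparison inequalities must be replaced by their concave-weight analogues. The plan is: given $\varepsilon > 0$, first choose $h$ large so that the tail $\int_{\{u \leq -h\}} (\psi(|u_k|) + \psi(|u_l|))\,\omega_{u_k}^n < \varepsilon$ uniformly in $k,l$, using the convergence $E_\psi(u_k) \to E_\psi(u)$ and the comparison-principle control of $\mathbbm{1}_{\{u_k > -h\}}\omega_{u_k}^n$; then, with $h$ fixed, use Proposition \ref{MA_cont} (weak convergence $\omega_{u_k}^n \to \omega_u^n$) together with the uniform boundedness of $\psi(u_l - u_k)$ on the good region and the pointwise decay $u_l - u_k \to 0$ to send the bounded part below $\varepsilon$ for $l,k$ large. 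Combining the two pieces yields $\limsup_{l,k} d_\psi(u_l,u_k) \leq 2\varepsilon$, and since $\varepsilon$ was arbitrary the lemma follows. Throughout I would lean on the $\psi$-analogues of the estimates in \cite[Section 4]{Da15}, which carry over by replacing the convex weight $\chi$ with the concave $\psi$ and using Lemma \ref{lem: sublinear} in place of convexity.
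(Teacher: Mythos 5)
Your opening reduction is the same as the paper's: for $l \leq k$ you use the monotone estimate of Proposition \ref{prop: Mdist_est} to get $d_\psi(u_l,u_k) \leq \int_X \psi(u_l-u_k)\,\o_{u_k}^n$. After that, however, your argument has a genuine gap at its central step: the tail estimate. You claim that $\int_{\{u \leq -h\}} \psi(|u_k|)\,\o_{u_k}^n$ (and, worse, the mixed-index terms $\int_{\{u \leq -h\}} \psi(|u_l|)\,\o_{u_k}^n$, where the integrand and the measure carry different indices) is small uniformly in $k,l$ for $h$ large, ``by the uniform energy bound.'' This does not follow: $\sup_k E_\psi(u_k) < \infty$, or even $E_\psi(u_k) \to E_\psi(u)$, is a boundedness/convergence statement, not a uniform integrability statement, and no Vitali-- or Scheff\'e--type theorem applies when both the integrand and the underlying measure vary with $k$. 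None of the results you cite (Proposition \ref{prop: Energy_est}, Corollary \ref{cor: Emonoton}, Proposition \ref{E_semicont}, Proposition \ref{MA_cont}) produces a bound on a set like $\{u \leq -h\}$ that is uniform across the sequence, and the mixed terms $\int \psi(|u_l|)\,\o_{u_k}^n$ are controlled by nothing you invoke. Any honest attempt to prove these uniform statements forces you to compare integrals against the varying measures $\o_{u_k}^n$ with integrals against the single fixed measure $\o_u^n$ --- which is exactly the difficulty your splitting was supposed to circumvent. (The ``good region'' half of your plan also needs more than you say, since weak convergence $\o_{u_k}^n \to \o_u^n$ cannot be paired with discontinuous integrands such as $\psi(u_l-u_k)\mathbbm{1}_{\{u>-h\}}$ without quasi-continuity/capacity arguments, but that part is at least standard.)

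The paper resolves the whole issue in one stroke, with no splitting and no weak convergence of measures. Fix $l$ and choose $v_j \in \mathcal H_\o$ decreasing to $u_l$. Then $u - v_j \leq u_k - v_j \leq 0$ are $\o_{v_j}$-psh, i.e.\ they live in the finite energy class $\mathcal E_\psi(X,\o_{v_j})$ relative to the shifted K\"ahler form $\o_{v_j}$, and their Monge--Amp\`ere measures are precisely $\o_u^n$ and $\o_{u_k}^n$. The fundamental estimate (Proposition \ref{prop: Energy_est}) applied in that class gives
\begin{equation*}
\int_X \psi(u_k - v_j)\,\o_{u_k}^n \;\leq\; C \int_X \psi(u - v_j)\,\o_u^n,
\end{equation*}
and since $v_j \geq u_l$ one has $\psi(u_k-u_l) \leq \psi(u_k - v_j)$; letting $j \to \infty$ yields $d_\psi(u_l,u_k) \leq C \int_X \psi(u-u_l)\,\o_u^n$. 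The right-hand side involves only the fixed measure $\o_u^n$, so ordinary dominated convergence (with dominating function $\psi(u-u_1) \in L^1(\o_u^n)$, using Lemma \ref{lem: sublinear} and $u \in \mathcal E_\psi$) finishes the proof. This background-shifting trick is the key idea your proposal is missing; without it, or an equivalent device, the uniform tail control you assert remains unproved.
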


\begin{proof}
We can suppose that $l \leq k$. Then $u_k\leq u_l$, hence by Proposition \ref{prop: Mdist_est} we have:
$$d_\psi(u_l,u_k) \leq \int_X\psi(u_k-u_l)\o_{ u_k}^n.$$
Let us fix $l$ momentarily, and let $\{v_j\}_j \in \mathcal H_\omega$ be such that $v_j \searrow u_l$. Then $u - v_j, u_k - v_j \in \mathcal E_\psi(X,\omega_{v_j})$ and $u - v_j\leq u_k - v_j\leq0$. Hence, applying Proposition \ref{prop: Energy_est}  for the class $\mathcal E_\psi(X,\omega_{u_l})$ we obtain
\begin{flalign}\label{eq: estimate}
d_\psi(u_l,u_k)&\leq \int_X\psi(u_k-u_l)\o_{ u_k}^n \leq \lim_j \int_X\psi(u_k-v_j)\o_{u_k}^n \nonumber\\
& \leq C \lim_j \int_X\psi(u-v_j)\o_u^n = C \int_X\psi(u-u_l)\o_u^n.
\end{flalign}
As $u_l$ decreases to $u \in \mathcal E_\psi$, by the dominated convergence theorem we have $d_\psi(u_l,u_k) \to 0$ as $l,k \to \infty$.
\end{proof}

Our next lemma confirms that the way we proposed to extend the $d_\psi$ metric to $\mathcal E_\psi$  in \eqref{eq: dpsi_def_general} is consistent.

\begin{lemma} Given $u_0,u_1 \in \mathcal E_\psi$, the limit in \eqref{eq: dpsi_def_general} is finite and independent of the approximating sequences $u^k_0, u^k_1 \in \mathcal H^\Delta_\o$. \end{lemma}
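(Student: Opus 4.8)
The plan is to establish the two assertions—finiteness/existence of the limit, and independence of the approximants—by exploiting the triangle inequality (Theorem \ref{thm: triangle_ineq}) together with Lemma \ref{lem: IntDistEst}. For the first, I would note that for any indices $k,l$ the triangle inequality gives
$$|d_\psi(u^k_0,u^k_1) - d_\psi(u^l_0,u^l_1)| \leq d_\psi(u^k_0,u^l_0) + d_\psi(u^k_1,u^l_1).$$
Since $\{u^k_0\}$ and $\{u^k_1\}$ are decreasing sequences in $\mathcal H^\Delta_\omega$ with limits $u_0,u_1 \in \mathcal E_\psi$, Lemma \ref{lem: IntDistEst} forces both terms on the right to vanish as $k,l \to \infty$. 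Thus $\{d_\psi(u^k_0,u^k_1)\}_k$ is Cauchy in $\Bbb R$ and converges to a finite limit.

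For independence, let $\{u^k_0\},\{\tilde u^k_0\}$ and $\{u^k_1\},\{\tilde u^k_1\}$ be two choices of decreasing approximants. The same triangle-inequality estimate bounds $|d_\psi(u^k_0,u^k_1) - d_\psi(\tilde u^k_0,\tilde u^k_1)|$ by $d_\psi(u^k_0,\tilde u^k_0) + d_\psi(u^k_1,\tilde u^k_1)$, so everything reduces to the claim: if $\{a_k\},\{b_k\} \subset \mathcal H^\Delta_\omega$ both decrease to the same $u \in \mathcal E_\psi$, then $d_\psi(a_k,b_k) \to 0$. To prove the claim I would introduce the rooftop envelopes $c_k := P(a_k,b_k)$, which lie in $\mathcal H^\Delta_\omega$ by \cite[Theorem 2.5]{DR16}, satisfy $u \leq c_k \leq \min(a_k,b_k)$, and hence decrease to $u$ as well. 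The Pythagorean identity (Lemma \ref{lem: Pythagorean}) gives $d_\psi(a_k,b_k) = d_\psi(a_k,c_k) + d_\psi(b_k,c_k)$, so by symmetry it suffices to show $d_\psi(a_k,c_k) \to 0$.

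The core estimate, which is where the main work lies, mirrors the proof of Lemma \ref{lem: IntDistEst}. Since $c_k \leq a_k$, Proposition \ref{prop: Mdist_est} gives $d_\psi(a_k,c_k) \leq \int_X \psi(a_k - c_k)\, \omega_{c_k}^n$. Choosing $v_j \in \mathcal H_\omega$ with $v_j \searrow a_k$, one has $c_k - v_j \leq c_k - a_k \leq 0$, so $\psi(a_k - c_k) \leq \psi(c_k - v_j)$ by evenness and monotonicity of $\psi$ on $[0,\infty)$; rewriting $\int_X \psi(c_k - v_j)\, \omega_{c_k}^n$ as the relative energy of $c_k - v_j$ in $\mathcal E_\psi(X,\omega_{v_j})$ and applying the fundamental estimate (Proposition \ref{prop: Energy_est}) to the pair $u - v_j \leq c_k - v_j \leq 0$, then letting $v_j \searrow a_k$, yields
$$d_\psi(a_k,c_k) \leq \int_X \psi(a_k - c_k)\, \omega_{c_k}^n \leq C\int_X \psi(a_k - u)\, \omega_u^n.$$
Since $0 \leq a_k - u \leq a_1 - u$ with $\int_X \psi(a_1 - u)\,\omega_u^n < \infty$ (using subadditivity, Lemma \ref{lem: sublinear}, and $u \in \mathcal E_\psi$), and $a_k - u \to 0$ almost everywhere with respect to $\omega_u^n$, the dominated convergence theorem drives the right-hand side to $0$.

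The main obstacle is precisely this core estimate: one must thread the relative-energy reformulation and the fundamental estimate with the regularizations $v_j$ decreasing to the \emph{larger} function $a_k$ rather than to the limit $u$, while keeping the monotonicity of $\psi$ oriented so that the comparison $\int_X \psi(a_k-c_k)\,\omega_{c_k}^n \leq C\int_X \psi(a_k-u)\,\omega_u^n$ survives. Once this comparison is in hand, the passage to $0$ is routine dominated convergence, and the reduction via the envelopes $P(a_k,b_k)$ and the Pythagorean identity is purely formal.
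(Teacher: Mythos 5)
Your proposal is correct. The first half (existence of the limit via the triangle inequality of Theorem \ref{thm: triangle_ineq} plus Lemma \ref{lem: IntDistEst}) is exactly the paper's argument. For independence of the approximants, however, you take a genuinely different route. The paper first perturbs the two approximating sequences by small constants so they decrease strictly, then uses Dini's lemma to interleave them ($v^j_0 < u^k_0$ for $j \geq j_k$), and finally applies the estimate \eqref{eq: estimate} already derived inside the proof of Lemma \ref{lem: IntDistEst} to the ordered triple $u_0 \leq v^j_0 \leq u^k_0$. You instead avoid any ordering between the two sequences by passing to the rooftop envelope $c_k = P(a_k,b_k) \in \mathcal H^\Delta_\omega$ and splitting $d_\psi(a_k,b_k) = d_\psi(a_k,c_k) + d_\psi(b_k,c_k)$ via the Pythagorean identity (Lemma \ref{lem: Pythagorean}); the price is that you must re-derive the fundamental-estimate bound $d_\psi(a_k,c_k) \leq C\int_X \psi(a_k-u)\,\omega_u^n$ for the triple $u \leq c_k \leq a_k$, since $c_k$ is not an element of either decreasing sequence and \eqref{eq: estimate} cannot be quoted verbatim. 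Your re-derivation is sound: the ordering $u \leq c_k \leq a_k \leq v_j$ makes the application of Proposition \ref{prop: Mdist_est}, the comparison $\psi(a_k-c_k) \leq \psi(c_k-v_j)$, the fundamental estimate (Proposition \ref{prop: Energy_est}) relative to $\omega_{v_j}$, and the final dominated convergence (with dominating function supplied by Lemma \ref{lem: sublinear}) all legitimate. In the end both proofs rest on the same analytic core; the paper's interleaving reuses \eqref{eq: estimate} verbatim but needs the Dini trick, while your envelope argument is more symmetric in the two sequences and showcases the Pythagorean/envelope machinery that the paper deploys elsewhere (e.g.\ in Theorem \ref{thm: EpsiComplete}).
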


\begin{proof} By Proposition \ref{prop: pushforw_approx} we can assume that the approximating sequences are smooth. By the triangle inequality and Lemma \ref{lem: IntDistEst} we can write:
$$|d_\psi(u^l_0,u^l_1)-d_\psi(u^k_0,u^k_1)| \leq d_\psi(u^l_0,u^k_0) +d_\psi(u^l_1,u^k_1) \to 0, \ l,k \to \infty, $$
proving that $d_\psi(u^k_0,u^k_1)$ is indeed convergent.

Now we prove that the limit in \eqref{eq: dpsi_def_general} is independent of the choice of approximating sequences. Let $v^l_0, v^l_1 \in \mathcal H_\o$ be different approximating sequences. By adding small constants we arrange that the sequences $u^l_0, u^l_1$, respectively $v^l_0, v^l_1$, are strictly decreasing to $u_0,u_1$.

Fixing $k$ for the moment, the sequence $\{\max\{ u^{k+1}_0,v^j_0\}\}_{j \in \Bbb N}$ decreases pointwise to $u^{k+1}_0$. By Dini's lemma there exists $j_k\in \Bbb N$ such that for any $j \geq j_k$ we have $v^j_0 < u^k_0$. By repeating the same argument we can also assume that $v^j_1 < u^k_1$ for any $j \geq j_k$. By the triangle inequality again
$$|d_\psi(u^k_0,u^k_1)-d_\psi(v^j_0,v^j_1)| \leq d_\psi(u^k_0,v^j_0) +d_\psi(u^k_1,v^j_1), \ j \geq j_k. $$
From \eqref{eq: estimate} it follows that for $k$ big enough $d_\psi(u^j_0,v^k_0)$, $d_\psi(u^j_1,v^k_1), \ j \geq j_k$ are arbitrarily small. As a result, $d_\psi(u_0,u_1)$ is independent of the choice of approximating sequences. 

When $u_0,u_1 \in \mathcal H^\Delta_\o$, one can approximate with the constant sequence, hence the restriction to $\mathcal H_\o$  of the extended $d_\psi$ from  \eqref{eq: dpsi_def_general} coincides with the original definition \eqref{eq: d_psi_def}. 
\end{proof}

By the above result, \cite[Proposition 2.20]{Da19}, and the remark following it,  many properties of $d_\psi$ extend to $\mathcal E_\psi$, in particular the triangle inequality, the Pythagorean formula,  etc. We list these in the proposition below and leave the standard proofs to the interested reader.

\begin{proposition}\label{prop: Epsi_extension} Let $\psi \in \mathcal W^-$. Then the following hold:\\
\noindent (i) $d_\psi:\mathcal E_\psi \times \mathcal E_\psi \to \Bbb R$ satisfies the triangle inequality. \\
\noindent (ii)  If $u,v \in \mathcal E_\psi$ then $P(u,v) \in \mathcal E_\psi$ and $d_\psi(u,v) = d_\psi(u,P(u,v)) + d_\psi(v,P(u,v)).$\\
\noindent (iii) Suppose $u,v \in \mathcal E_\psi$ with $u \leq v$. Then we have:
$$
\max\Big( \frac{1}{2^{n+1}}\int_X \psi(v-u) \o_u^n, \int_X \psi(v-u) \o_v^n \Big) \leq d_\psi(u,v) \leq \int_X\psi(v-u) \o_u^n.$$
\noindent (iv) For  $u,v,w \in \mathcal E_\psi$ we have
$d_\psi(P(u,w),P(v,w)) \leq d_\psi(u,v).$
\end{proposition}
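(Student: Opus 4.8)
The plan is to prove all four statements by the same mechanism: fix decreasing approximating sequences from $\mathcal{H}_\omega^\Delta$, invoke the corresponding statement already established on $\mathcal{H}_\omega^\Delta$, and pass to the limit using the definition \eqref{eq: dpsi_def_general} of the extended $d_\psi$ together with the weak convergence machinery of \cite[Proposition 2.20]{Da19} and the remark following it. For $a \in \mathcal{E}_\psi$ I fix $a^k \in \mathcal{H}_\omega$ with $a^k \searrow a$ (Demailly \cite{De94}); since $\mathcal{H}_\omega \subset \mathcal{H}_\omega^\Delta$ these are legitimate approximants, and by the consistency established above $d_\psi(a^k,b^k) \to d_\psi(a,b)$ for any such choice. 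Two elementary facts about rooftop envelopes are used throughout: if $a^k \searrow a$ and $b^k \searrow b$ then $P(a^k,b^k) \in \mathcal{H}_\omega^\Delta$ (by \cite[Theorem 2.5]{DR16}) and $P(a^k,b^k) \searrow P(a,b)$, the latter because the decreasing limit is $\omega$-psh, lies below both $a$ and $b$, hence below $P(a,b)$, while dominating $P(a,b)$ termwise.

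Parts (i) and (iv) are then immediate. For (i) I apply Theorem \ref{thm: triangle_ineq} to $u^k,v^k,w^k$, obtaining $d_\psi(u^k,w^k) \le d_\psi(u^k,v^k)+d_\psi(v^k,w^k)$, and let $k \to \infty$. For (iv) I apply Proposition \ref{prop: contractive} to the triple $u^k,v^k,w^k$, giving $d_\psi(P(u^k,w^k),P(v^k,w^k)) \le d_\psi(u^k,v^k)$; since $P(u^k,w^k)\searrow P(u,w)$ and $P(v^k,w^k)\searrow P(v,w)$ are valid approximants of points of $\mathcal{E}_\psi$ (using part (ii)), both sides converge to the desired quantities.

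For (ii), the one genuinely new point is the membership $P(u,v) \in \mathcal{E}_\psi$. Since every statement is invariant under adding a common constant (translation invariance of $d_\psi$, and of $\mathcal{E}_\psi$ via Lemma \ref{lem: sublinear}), I normalize $u,v \le 0$ and take $u^k,v^k \le 0$. The standard orthogonality relation for rooftop envelopes of bounded potentials, $\omega_{P(u^k,v^k)}^n \le \mathbbm{1}_{\{P(u^k,v^k)=u^k\}}\omega_{u^k}^n + \mathbbm{1}_{\{P(u^k,v^k)=v^k\}}\omega_{v^k}^n$, together with evenness of $\psi$, gives $E_\psi(P(u^k,v^k)) \le E_\psi(u^k)+E_\psi(v^k)$; by the fundamental estimate (Proposition \ref{prop: Energy_est}) the right side is bounded by $C(E_\psi(u)+E_\psi(v)) < \infty$. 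As $P(u^k,v^k)\searrow P(u,v)$, Proposition \ref{E_semicont} yields $P(u,v) \in \mathcal{E}_\psi$. The Pythagorean identity then follows by applying Lemma \ref{lem: Pythagorean} to each pair $u^k,v^k$ and letting $k \to \infty$, using $P(u^k,v^k)\searrow P(u,v)$.

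For (iii), assuming $u \le v$, I first produce comparable approximants by setting $u^k := P(\tilde u^k,v^k)$ for arbitrary $\tilde u^k \searrow u$ and $v^k \searrow v$; then $u^k \in \mathcal{H}_\omega^\Delta$, $u^k \le v^k$, and $u^k \searrow P(u,v)=u$. Applying Proposition \ref{prop: Mdist_est} to $u^k \le v^k$ and letting $k \to \infty$ yields the three-term estimate, provided the outer integrals converge. This last convergence—of $\int_X \psi(v^k-u^k)\omega_{u^k}^n$ and $\int_X \psi(v^k-u^k)\omega_{v^k}^n$ to their limits, where both the possibly unbounded integrand $\psi(v-u)$ and the measures vary simultaneously—is the main technical obstacle, and is exactly what \cite[Proposition 2.20]{Da19} and the remark following it are designed to supply, the middle term $d_\psi(u^k,v^k)$ converging by the consistency of \eqref{eq: dpsi_def_general}.
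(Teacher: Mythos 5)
Your proposal is correct and follows exactly the route the paper intends: the paper explicitly leaves these proofs to the reader, indicating only that they follow from the results already established on $\mathcal H^\Delta_\omega$ (the triangle inequality, Lemma \ref{lem: Pythagorean}, Propositions \ref{prop: contractive} and \ref{prop: Mdist_est}), the consistency of the extension \eqref{eq: dpsi_def_general}, and the convergence machinery of \cite[Proposition 2.20]{Da19} -- precisely the ingredients you assemble, including the correct device of comparable approximants $u^k := P(\tilde u^k, v^k)$ in part (iii). The only point where you deviate slightly is the membership $P(u,v)\in\mathcal E_\psi$ in part (ii), which you prove directly via the orthogonality relation, the fundamental estimate (Proposition \ref{prop: Energy_est}) and Proposition \ref{E_semicont}, whereas the paper would simply cite \cite[Theorem 3.6]{Da17b} (as it does later in the proof of Theorem \ref{thm: EpsiComplete}); both are valid, and your version is self-contained.
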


We now argue that non-degeneracy of $d_\psi$ on $\mathcal E_\psi$ holds as well:

\begin{proposition}\label{prop: dpsi_nondegeneracy} Given $u_0,u_1 \in \mathcal E_\psi$ if $d_\psi(u_0,u_1)=0$ then $u_0=u_1$. In particular, $(\mathcal E_\psi,d_\psi)$ is a metric space.
\end{proposition}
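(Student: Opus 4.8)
The plan is to show non-degeneracy of $d_\psi$ on $\mathcal E_\psi$ by mimicking the argument already used for $\mathcal H_\omega^\Delta$ in Corollary \ref{cor: main_cor}, relying on the extension properties collected in Proposition \ref{prop: Epsi_extension}. First I would reduce to the comparison of each point with the rooftop envelope $P(u_0,u_1)$. Indeed, by the Pythagorean identity (Proposition \ref{prop: Epsi_extension}(ii)), if $d_\psi(u_0,u_1)=0$ then both $d_\psi(u_0,P(u_0,u_1))=0$ and $d_\psi(u_1,P(u_0,u_1))=0$, since $d_\psi$ is a sum of two nonnegative quantities. This splits the problem into two symmetric pieces, each comparing a point of $\mathcal E_\psi$ to an $\mathcal E_\psi$-envelope lying below it (note $P(u_0,u_1)\le u_0, u_1$, and $P(u_0,u_1)\in\mathcal E_\psi$ by part (ii)).

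Next I would apply the estimate of Proposition \ref{prop: Epsi_extension}(iii) to the pair $P(u_0,u_1)\le u_0$. Since $d_\psi(u_0,P(u_0,u_1))=0$, the lower bound $\frac{1}{2^{n+1}}\int_X \psi(u_0-P(u_0,u_1))\,\o_{P(u_0,u_1)}^n \le d_\psi(u_0,P(u_0,u_1))=0$ forces $\int_X \psi\big(u_0-P(u_0,u_1)\big)\,\o_{P(u_0,u_1)}^n=0$. Because $\psi$ is strictly positive away from $0$ and vanishes only at $0$, this means $u_0=P(u_0,u_1)$ almost everywhere with respect to the non-pluripolar measure $\o_{P(u_0,u_1)}^n$. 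By symmetry the same holds for $u_1$.

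To upgrade this almost-everywhere equality to a genuine pointwise inequality, I would invoke the domination principle for full mass potentials, exactly as in the proof of Corollary \ref{cor: main_cor}: since $P(u_0,u_1)\in\mathcal E$ and $u_0=P(u_0,u_1)$ holds $\o_{P(u_0,u_1)}^n$-a.e., Dinew's domination principle (\cite[Proposition 5.9]{BL12}) yields $u_0\le P(u_0,u_1)$. The reverse inequality $P(u_0,u_1)\le u_0$ is immediate from the definition of the rooftop envelope, so $u_0=P(u_0,u_1)$, and likewise $u_1=P(u_0,u_1)$; hence $u_0=u_1$.

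I expect the only genuinely delicate point to be confirming that the domination principle applies verbatim in the low energy class $\mathcal E_\psi$ rather than only in $\mathcal H_\omega^\Delta$ — that is, checking that $P(u_0,u_1)$ indeed lies in $\mathcal E$ (which follows from $\mathcal E_\psi\subset\mathcal E$ together with Proposition \ref{prop: Epsi_extension}(ii)) and that the almost-everywhere statement is with respect to the correct non-pluripolar measure. All remaining steps are direct consequences of results already established in Proposition \ref{prop: Epsi_extension}, so the proof should be short.
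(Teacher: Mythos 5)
Your proof is correct and follows essentially the same route as the paper: the Pythagorean identity of Proposition \ref{prop: Epsi_extension}(ii) to reduce to comparisons with $P(u_0,u_1)$, the lower bound of Proposition \ref{prop: Epsi_extension}(iii) to get equality $\o_{P(u_0,u_1)}^n$-a.e., and Dinew's domination principle \cite[Proposition 5.9]{BL12} to conclude $u_0=P(u_0,u_1)=u_1$. The paper's proof is a verbatim repetition of the argument of Corollary \ref{cor: main_cor} with the same three ingredients, so there is nothing to add.
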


\begin{proof} We can repeat the argument of Corollary \ref{cor: main_cor}. By Proposition \ref{prop: Epsi_extension}(ii) it follows that $d_\psi(u_0,P(u_0,u_1))=0$ and also $d_\psi(u_1,P(u_0,u_1))=0$. By Proposition \ref{prop: Epsi_extension}(iii), it follows that $u_0=P(u_0,u_1)$ a.e. with respect to $\o_{P(u_0,u_1)}^n$, and similarly, $u_1= P(u_0,u_1)$ a.e. with respect to $\o_{P(u_0,u_1)}^n$. We can now use  the domination principle of full mass potentials due to Dinew \cite[Proposition 5.9]{BL12} to obtain that $u_0 \leq P(u_0,u_1)$ and $u_1 \leq P(u_0,u_1)$. As the reverse inequalities are trivial, we get that $u_0 = P(u_0,u_1)=u_1$. 
\end{proof}

\begin{corollary} \label{cor: d_psi_monotone_limit}
If $\{w_k\}_{k \in \Bbb N} \subset \mathcal E_\psi$ decreases or increases a.e. to $w \in \mathcal E_\psi$ then $d_\psi(w_k,w)\to 0$.
\end{corollary}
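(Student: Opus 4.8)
\textbf{Proof Proposal for Corollary \ref{cor: d_psi_monotone_limit}.}

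The plan is to reduce to the two monotone cases separately and exploit the sharp estimates in Proposition \ref{prop: Epsi_extension}(iii) together with dominated convergence. First I would treat the decreasing case. Suppose $w_k \searrow w$ with all $w_k, w \in \mathcal E_\psi$. Since $w \leq w_k$, I can apply the upper bound in Proposition \ref{prop: Epsi_extension}(iii) to the pair $(w, w_k)$ to obtain
\begin{equation*}
d_\psi(w_k, w) \leq \int_X \psi(w_k - w)\, \o_{w}^n.
\end{equation*}
The integrand $\psi(w_k - w)$ decreases pointwise to $\psi(0) = 0$ as $k \to \infty$, and it is dominated by $\psi(w_1 - w)$, which is $\o_w^n$-integrable because $w_1 - w \geq 0$ and one can control $\int_X \psi(w_1 - w)\o_w^n$ via the fundamental estimate (Proposition \ref{prop: Energy_est}) exactly as in the proof of Lemma \ref{lem: IntDistEst}. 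Hence the dominated convergence theorem gives $d_\psi(w_k, w) \to 0$.

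For the increasing case, where $w_k \nearrow w$ a.e. with $w_k, w \in \mathcal E_\psi$, the roles are reversed: now $w_k \leq w$, so the same upper bound in Proposition \ref{prop: Epsi_extension}(iii) applied to $(w_k, w)$ yields
\begin{equation*}
d_\psi(w_k, w) \leq \int_X \psi(w - w_k)\, \o_{w_k}^n.
\end{equation*}
The difficulty here is that the measure $\o_{w_k}^n$ also varies with $k$, so I cannot directly invoke dominated convergence against a fixed measure. My plan is to handle this by the same device used in \eqref{eq: estimate}: fix an auxiliary decreasing sequence or use the monotonicity of the energy under Proposition \ref{prop: Energy_est} to bound $\int_X \psi(w - w_k)\o_{w_k}^n$ by a constant multiple of $\int_X \psi(w - w_k)\o_w^n$, thereby transferring the estimate to the fixed measure $\o_w^n$. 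Once on the fixed measure, $\psi(w - w_k) \searrow 0$ pointwise and is dominated by $\psi(w - w_1)$, so dominated convergence finishes the argument. The weak convergence $\o_{w_k}^n \to \o_w^n$ from Proposition \ref{MA_cont} provides the underlying justification that the comparison is legitimate.

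The main obstacle I anticipate is the increasing case, specifically controlling the varying measure $\o_{w_k}^n$. The cleanest route is likely to reduce to the already-established Lemma \ref{lem: IntDistEst} and the consistency of the extension: by approximating each $w_k$ and $w$ from above by smooth potentials and invoking the triangle inequality together with the continuity of $d_\psi$ under decreasing approximation, one can sidestep a direct estimate on the moving measure. In fact, the most economical proof may simply observe that both statements follow formally from Proposition \ref{prop: Epsi_extension}(iii) combined with the monotonicity estimate of Proposition \ref{prop: Energy_est}, so I would aim to present the decreasing case in full and remark that the increasing case follows by the symmetric argument, with the energy comparison supplying the needed uniform integrability.
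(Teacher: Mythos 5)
Your treatment of the decreasing case is essentially correct: since $w \leq w_k$, Proposition \ref{prop: Epsi_extension}(iii) gives $d_\psi(w_k,w) \leq \int_X \psi(w_k-w)\,\o_w^n$ with a \emph{fixed} measure, and dominated convergence applies once the integrability of the dominating function $\psi(w_1-w)$ against $\o_w^n$ is justified as in Lemma \ref{lem: IntDistEst}. The genuine gap is in the increasing case. The fundamental estimate (Proposition \ref{prop: Energy_est}) cannot ``transfer'' $\int_X \psi(w-w_k)\,\o_{w_k}^n$ to the fixed measure $\o_w^n$: it compares $E_\psi(v)=\int_X\psi(v)\,\o_v^n$ with $E_\psi(u)=\int_X\psi(u)\,\o_u^n$, i.e., each potential is paired with its \emph{own} Monge--Amp\`ere measure; it says nothing about integrating one fixed function against two different measures, there is no domination of the form $\o_{w_k}^n \leq C\,\o_w^n$, and in any case its direction is the opposite of what you need (it bounds the energy of the larger potential by that of the smaller one, so all such tricks bound fixed-measure integrals by moving-measure ones, never the reverse). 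Worse, the transfer you hope for is structurally impossible: for $w_k \leq w$, the quantity $\int_X \psi(w-w_k)\,\o_w^n$ appears in Proposition \ref{prop: Epsi_extension}(iii) as a \emph{lower} bound for $d_\psi(w_k,w)$, so showing that this fixed-measure integral tends to zero (which your dominated convergence argument does) can never by itself force $d_\psi(w_k,w)\to 0$. Your fallback via the triangle inequality and approximation from above hits the same wall: for any smooth $v \geq w \geq w_k$, the only upper bound available for $d_\psi(w_k,v)$ from (iii) is $\int_X \psi(v-w_k)\,\o_{w_k}^n$, again against the moving measure.

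What is actually needed is a convergence theorem in which the integrand and the measure vary simultaneously along a monotone sequence in $\mathcal E$. This is exactly how the paper argues: it bounds $d_\psi(w,w_k) \leq \int_X \psi(w-w_k)\,(\o_{w_k}^n+\o_w^n)$ by Proposition \ref{prop: Epsi_extension}(iii) and then invokes \cite[Proposition 2.20]{Da19} (and the remark following it), a capacity/quasi-continuity based result, to conclude that the right-hand side tends to zero. Note that the weak convergence $\o_{w_k}^n \to \o_w^n$ of Proposition \ref{MA_cont}, which you cite as ``underlying justification,'' is not sufficient here: $\psi(w-w_k)$ is not continuous, so weak convergence of the measures gives no control over these integrals. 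In short, your proposal proves the decreasing half, but the increasing half is missing its key tool, and the substitutes you propose (fundamental estimate, triangle inequality reduction) would fail.
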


\begin{proof} By Proposition \ref{prop: Epsi_extension}(iii), we have $d_\psi(w,w_k) \leq \int_X \psi(w -w_k) (\o_{w_k}^n + \o_{w}^n)$. We can use \cite[Proposition 2.20]{Da19} (and the remark following it) to conclude that  $d_\psi(w,w_k) \to 0$.
\end{proof}

\begin{lemma} Suppose $\{u_k\}_{k \in \Bbb N} \subset \mathcal E_\psi$ be an  increasing  $d_\psi$--bounded sequence. Then $\sup_X u_k$ is a bounded sequence.
\end{lemma}

\begin{proof}
Using Theorem \ref{thm: Energy_Metric_Eqv} from below we have that $\int_X \psi(\max(u_k,0)) \omega^n  \leq \int_X \psi(u_k) \omega^n \leq 2^{2n+5} d_\psi(u_k,0) \leq C,$ for some $C>0$. Let $v : = \lim_k \max(0,u_k))$, a  measurable function on $X$.  By the monotone convergence theorem we obtain that $\int_X \psi(v) \omega^n \leq C$.  
This implies that for some $d >0$ the set $K := \{v \leq d\}$ has non-zero Lebesgue measure, hence $K$ is also non-pluripolar.

On $K$ we have that $u_k \leq d$. As a result, due to \cite[Corollary 4.3]{GZ05} we obtain that $\{u_k\}_k \subset \textup{PSH}(X,\omega)$ is relatively $L^1$-compact, hence $\sup_X u_k$ can not converge to $\infty$, finishing the proof.
\end{proof}

Next we argue that bounded monotone sequences in $\mathcal E_\psi$ have limits inside $\mathcal E_\psi.$ Using the previous lemma, the proof of this result is very similar to \cite[Lemma 3.34]{Da19}:

\begin{lemma}\label{lem: lemma mononton_seq} Suppose $\{u_k\}_{k \in \Bbb N} \subset \mathcal E_\psi$ is a decreasing/increasing  $d_\psi$--bounded sequence. Then $u = \lim_{k \to \infty} u_k \in \mathcal E_\psi$ and additionally $d_\psi(u,u_k) \to 0$.
\end{lemma}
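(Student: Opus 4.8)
The plan is to handle the decreasing and increasing cases separately, as is standard for such monotonicity arguments, relying throughout on the Pythagorean identity, the monotonicity estimate of Proposition \ref{prop: Epsi_extension}(iii), and the convergence result of Corollary \ref{cor: d_psi_monotone_limit}. In both cases the first task is to identify the pointwise limit $u := \lim_k u_k$ and argue it lands in $\mathcal E_\psi$; the second is to upgrade pointwise convergence to $d_\psi$-convergence.

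For the \emph{decreasing} case, first I would note that $u := \lim_k u_k$ is a well-defined $\omega$-psh function (as a decreasing limit of $\omega$-psh functions), provided it is not identically $-\infty$. To rule this out, the key is $d_\psi$-boundedness: fixing a reference index, say comparing each $u_k$ to $u_1$, the estimate in Proposition \ref{prop: Epsi_extension}(iii) gives $\int_X \psi(u_1 - u_k)\,\omega_{u_k}^n \leq 2^{n+1} d_\psi(u_1,u_k) \leq 2^{n+1} C$ for a uniform constant $C$ coming from the $d_\psi$-bound and the triangle inequality. This uniform $\psi$-energy bound, together with the monotonicity/semicontinuity machinery of Proposition \ref{E_semicont} and Corollary \ref{cor: Emonoton}, forces $u \in \mathcal E_\psi$ rather than degenerating. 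Once $u \in \mathcal E_\psi$ is secured, $d_\psi(u_k,u)\to 0$ follows directly from Corollary \ref{cor: d_psi_monotone_limit}, since $u_k \searrow u$ with all terms in $\mathcal E_\psi$.

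For the \emph{increasing} case, the candidate limit is $u := (\sup_k u_k)^\star$, the usc regularization of the pointwise supremum (equal a.e. to $\sup_k u_k$); the subtlety is that an increasing sequence of $\omega$-psh functions need not converge to an $\omega$-psh function unless it is uniformly bounded above. Here $d_\psi$-boundedness again supplies the needed control: using Proposition \ref{prop: Epsi_extension}(iii) in the form $\int_X \psi(u_k - u_1)\,\omega_{u_1}^n \leq d_\psi(u_1,u_k)$ bounds the increments from above, and combined with the monotonicity of the energy one concludes $u$ is a genuine element of $\mathcal E_\psi$ and is finite. Having placed $u \in \mathcal E_\psi$, the convergence $d_\psi(u_k,u)\to 0$ once more is exactly the increasing half of Corollary \ref{cor: d_psi_monotone_limit}.

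The main obstacle I anticipate is the first half in each case: proving that the monotone limit actually lies in $\mathcal E_\psi$ (and is not $-\infty$ in the decreasing case, nor escaping to $+\infty$ in the increasing case). This is where the $d_\psi$-boundedness hypothesis must be converted into a uniform $\psi$-\emph{energy} bound via Proposition \ref{prop: Epsi_extension}(iii), and then fed into the Guedj--Zeriahi semicontinuity results (Proposition \ref{E_semicont}, Corollary \ref{cor: Emonoton}) to land in the finite-energy class. By contrast, once membership in $\mathcal E_\psi$ is established, the metric convergence is immediate from Corollary \ref{cor: d_psi_monotone_limit} and requires no further work, which is precisely why the author can assert the proof is identical to \cite[Lemma 3.34]{Da19}.
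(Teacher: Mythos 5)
Your skeleton---convert the $d_\psi$-bound into integral bounds via Proposition \ref{prop: Epsi_extension}(iii), feed these into the Guedj--Zeriahi machinery, and finish with Corollary \ref{cor: d_psi_monotone_limit}---is indeed the strategy of \cite[Lemma 3.34]{Da19} that the paper invokes, and your decreasing case is correct up to a fixable imprecision: with $u_1$ as reference you only control $\int_X\psi(u_1-u_k)\,\omega_{u_k}^n$, which is not $E_\psi(u_k)$, whereas Proposition \ref{E_semicont} needs bounds on $E_\psi$ (of canonical cutoffs, via Proposition \ref{prop: Energy_est}). The standard fix is to normalize $u_1\le 0$ (subtract $\sup_X u_1<\infty$; $d_\psi$ is translation invariant) and compare with $0$: by the triangle inequality $d_\psi(0,u_k)\le d_\psi(0,u_1)+C$, and since $u_k\le 0$ the pair $(u_k,0)$ is nested, so Proposition \ref{prop: Epsi_extension}(iii) gives both $\int_X\psi(u_k)\,\omega_{u_k}^n\le 2^{n+1}C'$ and, crucially, $\int_X\psi(u_k)\,\omega^n\le C'$; the latter passes to the limit by monotone convergence and rules out $u\equiv-\infty$, the former yields $u\in\mathcal E_\psi$ through the cutoff/semicontinuity argument.

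The genuine gap is in the increasing case. There no normalization makes $0$ and $u_k$ nested (that would require $\inf_X u_1>-\infty$), so Proposition \ref{prop: Epsi_extension}(iii) only ever gives control against $\omega_{u_1}^n$ and $\omega_{u_k}^n$, never against the fixed smooth measure $\omega^n$---and that is too weak to rule out escape to $+\infty$. Indeed, $\omega_{u_1}^n$ may be mutually singular with $\omega^n$ (full-mass potentials can have Monge--Amp\`ere measure concentrated on Lebesgue-null, non-pluripolar sets), so $\int_X\psi(u_k-u_1)\,\omega_{u_1}^n\le C$ only says $\lim_k u_k<\infty$ a.e.\ with respect to $\omega_{u_1}^n$, which is perfectly compatible with $u_k\to+\infty$ a.e.\ with respect to $\omega^n$, i.e.\ with $(\sup_k u_k)^\star$ failing to be $\omega$-psh. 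Corollary \ref{cor: Emonoton} (your ``monotonicity of the energy'') only applies \emph{after} the limit is known to be a genuine psh function; it cannot produce that fact. The missing ingredient is exactly what \cite{Da19} has in hand at the corresponding point of the survey: the two-sided comparison between the metric and the energy expression, applied to the \emph{non-nested} pair $(0,u_k)$. In this paper that is Theorem \ref{thm: Energy_Metric_Eqv} (whose proof relies only on Proposition \ref{prop: Epsi_extension} and Lemma \ref{lem: mon_triplet_ineq}, not on the present lemma, so there is no circularity): it gives $\int_X\psi(u_k)\,\omega^n\le 2^{2n+5}d_\psi(0,u_k)\le C$, and then compactness of sup-normalized $\omega$-psh functions ($\int_X(\sup_X u_k-u_k)\,\omega^n\le C_0$) combined with subadditivity of $\psi$ (Lemma \ref{lem: sublinear}) yields $\psi(\sup_X u_k)\le \frac{2}{V}C+\psi(2C_0/V)$, hence $\sup_X u_k$ is bounded. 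Only then do Corollary \ref{cor: Emonoton} (since $u\ge u_1$) and Corollary \ref{cor: d_psi_monotone_limit} finish the argument.
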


\begin{proof}Due to the previous lemma, after subtracting a constant, we can assume without loss of generality that $u_k \leq 0$.

Let us assume that $\{u_k\}_k$ is decreasing. From Proposition \ref{prop: Epsi_extension}(iii) we have that $\int_X \psi(u_k) \omega^n_{u_k}$ is uniformly bounded. Due to \cite[Proposition 5.6]{GZ07} we get that $u = \lim_k u_k \in \mathcal E_\psi(X,\omega)$. Corollary \ref{cor: d_psi_monotone_limit}  implies that $d_\psi(u_k,u)  \to 0$.

Now us assume that $\{u_k\}_k$ is increasing. Due to the previous lemma, there exists $u \in \mathcal E_\psi(X,\omega)$ such that $u_k \nearrow u$. By Corollary \ref{cor: d_psi_monotone_limit} again, $d_\psi(u_k,u) \to 0$.\end{proof}

Finally, we argue completeness of $(\mathcal E_\psi,d_\psi)$:

\begin{theorem} \label{thm: EpsiComplete}  $(\mathcal E_\psi,d_\psi)$ is a complete metric space, that is the metric completion of $(\mathcal H_\o,d_\psi)$. 
\end{theorem}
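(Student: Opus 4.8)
The plan is to establish the two separate assertions in the theorem: that $(\mathcal E_\psi, d_\psi)$ is complete, and that it is the metric completion of $(\mathcal H_\omega, d_\psi)$. Since we already know from Proposition \ref{prop: dpsi_nondegeneracy} that $(\mathcal E_\psi, d_\psi)$ is a metric space and from the extension construction in \eqref{eq: dpsi_def_general} that $\mathcal H_\omega$ is $d_\psi$-dense in $\mathcal E_\psi$, the density half of the ``metric completion'' claim is essentially immediate: every $u \in \mathcal E_\psi$ is approximated by a decreasing sequence $u^k \in \mathcal H_\omega$ with $d_\psi(u^k, u) \to 0$ by Corollary \ref{cor: d_psi_monotone_limit}. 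Thus the entire content of the theorem reduces to proving completeness, after which the abstract fact that a complete metric space containing a dense subspace is its completion finishes the argument.

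For completeness, I would start with an arbitrary $d_\psi$-Cauchy sequence $\{u_j\} \subset \mathcal E_\psi$ and aim to extract a monotone subsequence whose limit I can control via Lemma \ref{lem: lemma mononton_seq}. First I would pass to a fast subsequence so that $d_\psi(u_j, u_{j+1}) \leq 2^{-j}$. The natural candidate for a monotone comparison sequence is $v_k := P(u_k, u_{k+1}, u_{k+2}, \dots)$, the rooftop envelope of the tails, which is decreasing in $k$; alternatively one builds increasing envelopes $w_k = (\sup_{j \geq k} u_j)^*$. The key estimates come from the contractivity and Pythagorean properties collected in Proposition \ref{prop: Epsi_extension}: using part (iv) repeatedly one shows that the tail envelopes $v_k$ stay $d_\psi$-close to the $u_j$'s, concretely that $d_\psi(v_k, u_k)$ is small (summable) by telescoping $d_\psi(P(u_k, \ldots, u_m), P(u_{k+1}, \ldots, u_m))$ against $d_\psi(u_k, u_{k+1})$ and letting $m \to \infty$. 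This is exactly the strategy used in the high-energy analogue \cite[Theorem 2]{Da15}, and the properties in Proposition \ref{prop: Epsi_extension} are tailored to make it go through.

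Once the monotone envelope sequence $\{v_k\}$ is shown to be $d_\psi$-bounded (which follows since it is $d_\psi$-close to the Cauchy, hence bounded, sequence $\{u_k\}$), Lemma \ref{lem: lemma mononton_seq} applies directly: the monotone limit $v := \lim_k v_k$ lies in $\mathcal E_\psi$ and satisfies $d_\psi(v_k, v) \to 0$. Combining $d_\psi(u_k, v) \leq d_\psi(u_k, v_k) + d_\psi(v_k, v) \to 0$ via the triangle inequality (Proposition \ref{prop: Epsi_extension}(i)) shows the subsequence converges to $v \in \mathcal E_\psi$. Since a Cauchy sequence with a convergent subsequence converges, the whole sequence $\{u_j\}$ converges to $v$, establishing completeness.

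The main obstacle I anticipate is the envelope estimate: controlling $d_\psi(v_k, u_k)$ for the rooftop of an infinite tail. One must justify taking the limit $m \to \infty$ inside $P(u_k, \ldots, u_m) \searrow v_k$ and transfer the pointwise monotone convergence into $d_\psi$-convergence, which requires Corollary \ref{cor: d_psi_monotone_limit} together with a uniform (summable) bound on the finite-stage distances $d_\psi(P(u_k, \ldots, u_m), P(u_{k+1}, \ldots, u_m)) \leq d_\psi(u_k, u_{k+1})$ coming from the contractivity Proposition \ref{prop: Epsi_extension}(iv). Verifying that $P$ of infinitely many tails indeed lands in $\mathcal E_\psi$ and behaves well under these metric estimates is the delicate pluripotential-theoretic point, but it mirrors \cite{Da15} closely enough that the finite-energy machinery of Section 2 should carry it through.
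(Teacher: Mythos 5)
Your proposal follows essentially the same route as the paper's proof: pass to a fast subsequence, form rooftop envelopes of the tails, control them through the contraction property of Proposition \ref{prop: Epsi_extension}(iv) together with telescoping, and conclude with the monotone convergence Lemma \ref{lem: lemma mononton_seq}; the paper merely organizes the infinite-tail envelope as the $d_\psi$-limit of the finite envelopes $v^k_l = P(u_k,\dots,u_{k+l})$ (citing \cite[Theorem 3.6]{Da17b} for membership in $\mathcal E_\psi$) rather than defining it outright, which is exactly the finite-stage justification you anticipate as the delicate point. One small slip: the tail envelopes $v_k$ are \emph{increasing} in $k$ (an envelope over fewer potentials is larger), not decreasing, but since Lemma \ref{lem: lemma mononton_seq} covers both monotone directions this does not affect the argument.
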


\begin{proof} By  Corollary \ref{cor: d_psi_monotone_limit}  and \cite{De94} $\mathcal H_\o$ is a $d_\psi$--dense subset of $\mathcal E_\psi$. We need to argue completeness, which can be done identically as in \cite{Da15}, due to Proposition \ref{prop: contractive}. 

Indeed, suppose $\{u_k\}_{k \in \Bbb N} \subset \mathcal E_\psi$ is a $d_\psi$--Cauchy sequence. We will prove that there exists $v \in \mathcal E_\psi$ such that $d_\psi (u_k,v) \to 0.$ After passing to a subsequence we can assume that
$$d_\psi(u_l,u_{l+1}) \leq 1/2^l, \ l \in \Bbb N.$$
By \cite[Theorem 3.6]{Da17b} we can introduce $v^k_l = P(u_k,u_{k+1},\ldots,u_{k+l}) \in \mathcal E_\psi, \ l,k \in \Bbb N$. We argue first that each decreasing sequence $\{ v^k_l\}_{l \in \Bbb N}$ is $d_\psi$--Cauchy. %Given our assumptions, this will follow if we show that $d_\psi(v^k_{l+1},v^k_l) \leq d_\psi (u_{l+k+1},u_{l+k}).$
We observe that $v^k_{l+1}=P(v^k_l,u_{k+ l+1})$ and $v^k_l=P(v^k_l,u_{k+l})$. Using this and Proposition \ref{prop: Epsi_extension}(iv) we can write:
$$d_\psi(v^k_{l+1},v^k_l) = d_\psi(P(v^k_l,u_{k+l+1}),P(v^k_l,u_{k+l})) \leq d_\psi(u_{k+l+1}, u_{k+l})\leq \frac{1}{2^{k+l}}.$$

From Lemma \ref{lem: lemma mononton_seq} it follows now that each sequence $\{ v^k_l\}_{l \in \Bbb N}$  is $d_\psi$--convergening to some $v^k \in \mathcal E_\psi$ . By the same trick as above, we can write:
\begin{flalign*}
d_\psi(v^k,v^{k+1}) &=\lim_{l \to \infty}d_\psi(v^k_{l+1},v^{k+1}_l)= \lim_{l \to \infty}d_\psi(P(u_k,v^{k+1}_{l}),P(u_{k+1},v^{k+1}_l))\leq d_\psi (u_k,u_{k+1}) \leq \frac{1}{2^k},
\end{flalign*}\vspace{-0.7cm}
\begin{flalign*}
d_\psi(v^k,u_k) &=\lim_{l \to \infty}d_\psi(v^k_l,u_k)=\lim_{l \to \infty}d_\psi(P(u_k,v^{k+1}_{l-1}),P(u_k,u_k))\leq\lim_{l \to \infty}d_\psi(v^{k+1}_{l-1},u_k)\\
&=\lim_{l \to \infty}d_\psi(P(u_{k+1},v^{k+2}_{l-2}),u_k)\leq \lim_{l \to \infty}d_\psi(P(u_{k+1},v^{k+2}_{l-2}),u_{k+1}) + d_\psi(u_{k+1},u_k)\\
&\leq \lim_{l \to \infty} \sum_{j=k}^{l+k} d_\psi (u_j,u_{j+1}) \leq \frac{1}{2^{k-1}}.
\end{flalign*}
Consequently, $\{v^k\}_{k \in \Bbb N}$ is an increasing $d_\psi$--bounded $d_\psi$--Cauchy sequence that is equivalent to $\{u_k\}_{k \in \Bbb N}$. By Lemma \ref{lem: lemma mononton_seq} there exists $v \in \mathcal E_\psi$ such that $d_\psi(v_k,v) \to 0$, which in turn implies that $d_\psi(u_k,v)\to 0$, finishing the proof.
\end{proof}

\section{An analytic expression governing the $d_\psi$ metric}

As another application of the Pythagorean formula we will show that the $d_\psi$ metric is comparable to a concrete analytic expression, reminiscent of the analogous result for high energy classes \cite[Theorem 3]{Da15}:

\begin{theorem} \label{thm: Energy_Metric_Eqv} For any $u_0,u_1 \in \mathcal E_\psi$ we have
\begin{equation}\label{eq: Energy_Metric_Eqv}
d_\psi(u_0,u_1) \leq \int_X \psi(u_0 - u_1) \o_{u_0}^n + \int_X \psi(u_0 - u_1) \o_{u_1}^n\leq {2^{2n + 5}} d_\psi(u_0,u_1).
\end{equation}
\end{theorem}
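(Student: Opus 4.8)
The plan is to reduce the general case to the monotone estimate already established in Proposition \ref{prop: Mdist_est}/Proposition \ref{prop: Epsi_extension}(iii), using the Pythagorean identity to split the distance through the rooftop envelope $P(u_0,u_1)$. Writing $w := P(u_0,u_1)$, we have $w \leq u_0$ and $w \leq u_1$, so Proposition \ref{prop: Epsi_extension}(ii) gives
\begin{equation*}
d_\psi(u_0,u_1) = d_\psi(u_0,w) + d_\psi(u_1,w).
\end{equation*}
Each of the two terms on the right is now a distance between a potential and a smaller one, so the monotone two-sided bound (iii) applies directly. This immediately controls $d_\psi(u_0,u_1)$ from above and below by integrals of $\psi(u_0 - w)$ and $\psi(u_1 - w)$ against the appropriate Monge--Amp\`ere measures $\o_{u_0}^n, \o_w^n$ and $\o_{u_1}^n, \o_w^n$.

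First I would prove the upper bound in \eqref{eq: Energy_Metric_Eqv}. From (iii) applied to the pair $w \leq u_0$ we get $d_\psi(u_0,w) \leq \int_X \psi(u_0 - w)\o_w^n$, and since $0 \leq u_0 - w \leq |u_0 - u_1|$ pointwise (because $w = \min(u_0,u_1)$ up to the envelope correction, and on the set $\{u_0 \leq u_1\}$ we have $u_0 - w = 0$ while on $\{u_0 > u_1\}$ we have $u_0 - w \leq u_0 - u_1$), monotonicity of $\psi$ on $[0,\infty)$ yields $\psi(u_0 - w) \leq \psi(u_0 - u_1)$. The measure $\o_w^n$ must then be dominated; here I would use that on the contact set the envelope satisfies $\o_w^n \leq \o_{u_0}^n + \o_{u_1}^n$ (the standard orthogonality property of $P(u_0,u_1)$, that $\o_w^n$ is concentrated on $\{w = u_0\} \cup \{w = u_1\}$ and agrees with the corresponding ambient measures there). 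Summing the two contributions gives the left-hand inequality with constant $1$.

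The reverse (lower) bound is where the factor $2^{2n+5}$ enters and is the main obstacle. Applying the left estimate of (iii) to each piece gives
\begin{equation*}
d_\psi(u_0,u_1) \geq \frac{1}{2^{n+1}}\Big(\int_X \psi(u_0 - w)\o_{u_0}^n + \int_X \psi(u_1 - w)\o_{u_1}^n\Big),
\end{equation*}
so it remains to dominate the full symmetric expression $\int_X \psi(u_0-u_1)(\o_{u_0}^n + \o_{u_1}^n)$ by a constant multiple of the two integrals on the right. The difficulty is that $u_0 - u_1$ can be large on regions where $u_0 - w$ or $u_1 - w$ vanishes (namely $u_0 - w = 0$ exactly where $u_0 \leq u_1$), so one cannot compare integrands pointwise against a single measure. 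The resolution I expect is to split $X$ into $\{u_0 \geq u_1\}$ and $\{u_0 < u_1\}$: on the first set $u_0 - w = u_0 - u_1$ (so $\psi(u_0-u_1) = \psi(u_0 - w)$ there), and on the second $u_1 - w = u_1 - u_0$. One then needs to transfer the bound from $\o_{u_0}^n$ to $\o_{u_1}^n$ (and vice versa) on the ``wrong'' set, which is accomplished by the partial comparison of Monge--Amp\`ere measures $\o_{u_i}^n \leq 2^n \o_{(u_0+u_1)/2}^n$ together with the monotonicity Lemma \ref{lem: mon_triplet_ineq} and subadditivity of $\psi$ (Lemma \ref{lem: sublinear}), exactly as in the proof of Proposition \ref{prop: Mdist_est}. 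Iterating the halving estimate, each application costing a factor $2^{n+1}$, and combining with the constant from (iii), accounts for the dimensional constant $2^{2n+5}$. The bookkeeping of these constants, and carefully justifying each measure-domination step through the envelope, is the technical heart of the argument; the structural skeleton, however, is entirely dictated by the Pythagorean splitting and the monotone estimate (iii).
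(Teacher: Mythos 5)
Your upper bound takes a genuinely different route from the paper's, but as written it rests on false pointwise claims. With $w := P(u_0,u_1)$ you assert that $u_0 - w = 0$ on $\{u_0 \le u_1\}$ and that $0 \le u_0 - w \le |u_0 - u_1|$ pointwise; both fail in general, because $P(u_0,u_1)$ is \emph{not} $\min(u_0,u_1)$ (the minimum of two $\omega$-psh functions is not $\omega$-psh, so the envelope drops strictly below it: at a point where $u_0 = u_1$ but $w < u_0$ one has $u_0 - w > 0 = |u_0-u_1|$). These statements hold only $\o_w^n$-almost everywhere, and only after invoking the orthogonality property $\o_w^n \le \mathbbm{1}_{\{w=u_0\}}\o_{u_0}^n + \mathbbm{1}_{\{w=u_1\}}\o_{u_1}^n$, which is a nontrivial external result (available in the literature, e.g. \cite{DR16}, \cite{Da17b}, but neither stated nor proved in this paper, and needing an extension to $\mathcal E_\psi$). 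The paper sidesteps all of this by splitting through $\max(u_0,u_1)$ rather than $P(u_0,u_1)$: since the upper estimate in Proposition \ref{prop: Epsi_extension}(iii) integrates $\psi$ of the difference against the Monge--Amp\`ere measure of the \emph{smaller} potential, going through the max (which is $\omega$-psh with no envelope theory needed) places $\o_{u_0}^n$ and $\o_{u_1}^n$ exactly where they are required, with constant $1$.

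The genuine gap is in the lower bound, which is the hard half of the theorem. You correctly identify the obstruction --- the estimates through $w$ control $\int_X \psi(u_1-w)$ only against $\o_{u_1}^n$ and $\o_w^n$, never against $\o_{u_0}^n$ --- but your proposed resolution (``iterating the halving estimate, each application costing a factor $2^{n+1}$, exactly as in the proof of Proposition \ref{prop: Mdist_est}'') is a gesture, not an argument. The proof of Proposition \ref{prop: Mdist_est} treats only \emph{comparable} potentials $u \le v$ and uses $t$-convexity of the geodesic directly; it does not by itself yield the measure transfer needed for incomparable $u_0,u_1$. What closes the gap in the paper is a separate halving lemma, $d_\psi\big(u_0,\frac{u_0+u_1}{2}\big) \le 2^{n+2} d_\psi(u_0,u_1)$ (Lemma \ref{lem: halwayest}), whose proof is its own chain of inequalities through $P(u_0,u_1)$ and $P\big(u_0,\frac{u_0+u_1}{2}\big)$ using Proposition \ref{prop: Epsi_extension}(ii),(iii) and Lemma \ref{lem: mon_triplet_ineq}. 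Given that lemma, one bounds $\int_X\psi\big(u_0 - P\big(u_0,\frac{u_0+u_1}{2}\big)\big)\o_{u_0}^n$ directly, and also $\int_X\psi\big(\frac{u_0+u_1}{2} - P\big(u_0,\frac{u_0+u_1}{2}\big)\big)\o_{u_0}^n$ via the domination $\o_{u_0}^n \le 2^n \o_{(u_0+u_1)/2}^n$; subadditivity and concavity of $\psi$ then recombine these two integrals into $\frac{1}{2}\int_X\psi(u_0-u_1)\o_{u_0}^n$, and symmetry gives the constant $2^{2n+5}$. None of this --- in particular the halving lemma, which is the key new ingredient --- is formulated or proved in your proposal, so the second inequality in \eqref{eq: Energy_Metric_Eqv} remains unproven.
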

\begin{proof} To obtain the first estimate we use the triangle inequality and Proposition \ref{prop: Epsi_extension}(iii):
\begin{flalign*}
d_\psi(u_0,u_1) &\leq
d_\psi(u_0,\max(u_0,u_1)) + d_\psi(\max(u_0,u_1),u_1)\\
&\leq \int_X \psi(u_0-\max(u_0,u_1)) \o_{u_0}^n + \int_X \psi(\max(u_0,u_1)-u_1) \o_{u_1}^n\\
&= \int_{\{u_1 > u_0\}} \psi(u_0-u_1) \o_{u_0}^n + \int_{\{u_0>u_1\}} \psi(u_0-u_1) \o_{u_1}^n\\
&\leq  \int_X \psi(u_0-u_1) \o_{u_0}^n + \int_X \psi(u_0-u_1) \o_{u_1}^n.
\end{flalign*}
Now we deal with the second estimate in \eqref{eq: Energy_Metric_Eqv}. By the next result, Proposition \ref{prop: Epsi_extension}(ii) and Proposition \ref{prop: Epsi_extension}(iii) we can write
\begin{flalign*}
2^{n+2}d_\psi(u_0,u_1) &\geq d_\psi\Big(u_0,\frac{u_0 + u_1}{2}\Big) \geq d_\psi\Big(u_0,P\Big(u_0,\frac{u_0 + u_1}{2}\Big)\Big)\\
&\geq \int_X \psi \Big(u_0 - P\Big(u_0,\frac{u_0 + u_1}{2}\Big)\Big) \o_{u_0}^n.
\end{flalign*}
By a similar reasoning as above, and the fact that $2^n \o^n_{(u_0 + u_1)/2} \geq \o^n_{u_0}$ we can write:
\begin{flalign*}
2^{n+2}d_\psi(u_0,u_1) &\geq d_\psi\Big(u_0,\frac{u_0 + u_1}{2}\Big) \geq d_\psi\Big(\frac{u_0+u_1}{2},P\Big(u_0,\frac{u_0 + u_1}{2}\Big)\Big)\\
&\geq \int_X \psi\Big(\frac{u_0+u_1}{2} - P\Big(u_0,\frac{u_0 + u_1}{2}\Big)\Big) \o_{(u_0 + u_1)/2}^n\\
&\geq \frac{1}{2^n} \int_X \psi \Big(\frac{u_0+u_1}{2} - P\Big(u_0,\frac{u_0 + u_1}{2}\Big)\Big) \o^n_{u_0}.
\end{flalign*}
Adding the last two estimates, and using sublinearity and concavity of $\psi$ (Lemma \ref{lem: sublinear}) we obtain:
\begin{flalign*}
2^{2n+3} d_\psi(u_0,u_1)&\geq  \int_X \psi\Big(u_0 - P\Big(u_0,\frac{u_0 + u_1}{2}\Big)\Big)+\psi \Big(P\Big(u_0,\frac{u_0 + u_1}{2}\Big)-\frac{u_0+u_1}{2}\Big) \Big) \o^n_{u_0}\\
&\geq \int_X \psi\Big(\frac{u_0 - u_1}{2}\Big)\o^n_{u_0} \geq \frac{1}{2}\int_X \psi(u_0 - u_1)\o^n_{u_0}.
\end{flalign*}
By symmetry we also have $2^{2n +4}d_\psi(u_0,u_1) \geq \int _X \psi(u_0 - u_1)\o_{u_1}^n$, and adding these last two estimates together the second inequality in \eqref{eq: Energy_Metric_Eqv}  follows.
\end{proof}

\begin{lemma} \label{lem: halwayest} Suppose $u_0,u_1 \in \mathcal E_\psi$. Then we have
$$d_\psi\Big(u_0,\frac{u_0+u_1}{2}\Big) \leq 2^{n+2}d_\psi(u_0,u_1).$$
\end{lemma}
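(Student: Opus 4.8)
The plan is to reduce the incomparable pair $(u_0,m)$, where $m := \tfrac12(u_0+u_1)$, to \emph{comparable} pairs by routing through the common lower envelope $v := P(u_0,u_1)$. First I would record the membership facts: $v \in \mathcal E_\psi$ by Proposition \ref{prop: Epsi_extension}(ii), and since $v \le u_0$ and $v \le u_1$ we get $v \le m$, so $m$ is $\omega$-psh with $m \ge v$ and hence $m \in \mathcal E_\psi$ by Corollary \ref{cor: Emonoton}. The structural point is that although $u_0$ and $m$ need not be comparable, both dominate $v$, so every distance appearing below will be between an ordered pair.

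Next I would split by the triangle inequality (Proposition \ref{prop: Epsi_extension}(i)), $d_\psi(u_0,m) \le d_\psi(u_0,v) + d_\psi(v,m)$, and estimate the two pieces separately. The first is immediate from the Pythagorean identity (Proposition \ref{prop: Epsi_extension}(ii)): since $d_\psi(u_0,u_1) = d_\psi(u_0,v) + d_\psi(u_1,v)$ with both summands nonnegative, we have $d_\psi(u_0,v) \le d_\psi(u_0,u_1)$.

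The main work is bounding $d_\psi(v,m)$. Since $v \le m$, the upper estimate in Proposition \ref{prop: Epsi_extension}(iii) gives $d_\psi(v,m) \le \int_X \psi(m-v)\,\omega_v^n$. Integrating against $\omega_v^n$ rather than $\omega_m^n$ is the crucial device: it sidesteps the mixed Monge--Amp\`ere measure of the average $m$, which has no clean comparison to $\omega_{u_0}^n$ and $\omega_{u_1}^n$. Writing $m-v = \tfrac12(u_0-v) + \tfrac12(u_1-v)$ with both summands nonnegative, subadditivity of $\psi$ (Lemma \ref{lem: sublinear}) together with the monotonicity of $\psi$ on $[0,\infty)$ yields $\psi(m-v) \le \psi(u_0-v) + \psi(u_1-v)$. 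Applying the lower estimate of Proposition \ref{prop: Epsi_extension}(iii) to the ordered pairs $v \le u_0$ and $v \le u_1$ gives $\int_X \psi(u_i-v)\,\omega_v^n \le 2^{n+1} d_\psi(v,u_i)$, whence $d_\psi(v,m) \le 2^{n+1}\big(d_\psi(v,u_0) + d_\psi(v,u_1)\big) = 2^{n+1} d_\psi(u_0,u_1)$ by Pythagoras once more.

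Combining the two bounds gives $d_\psi(u_0,m) \le (1+2^{n+1})\,d_\psi(u_0,u_1) \le 2^{n+2} d_\psi(u_0,u_1)$, using $1 \le 2^{n+1}$. The only genuine obstacle is the non-comparability of $u_0$ and $m$ coupled with the awkward measure $\omega_m^n$; both are dissolved by the single idea of comparing through $v = P(u_0,u_1)$ and keeping all integrals against $\omega_v^n$, after which the estimate reduces entirely to the Pythagorean identity and the two-sided bound (iii).
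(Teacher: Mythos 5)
Your proof is correct, and it takes a genuinely leaner route than the paper's, while drawing on the same toolkit. The paper applies the Pythagorean identity (Proposition \ref{prop: Epsi_extension}(ii)) to the pair $\big(u_0,\tfrac{u_0+u_1}{2}\big)$ itself, writing $d_\psi(u_0,m)=d_\psi(u_0,P(u_0,m))+d_\psi(m,P(u_0,m))$ with $m:=\tfrac12(u_0+u_1)$, and then pushes both terms down to the deeper envelope $P(u_0,u_1)$ using Lemma \ref{lem: mon_triplet_ineq} together with the inequality $P(u_0,u_1)\le P(u_0,m)$; the resulting integrand $\psi\big(m-P(u_0,u_1)\big)$ is controlled by replacing $m$ with $\max(u_0,u_1)$ and splitting the integral with indicator functions. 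You instead run the plain triangle inequality through $v=P(u_0,u_1)$ directly, which sacrifices the exactness of the first decomposition but eliminates the auxiliary envelope $P(u_0,m)$, the monotonicity lemma, and the indicator bookkeeping; your integrand $\psi(m-v)$ is handled in one line by subadditivity (Lemma \ref{lem: sublinear}) plus monotonicity of $\psi$ on $[0,\infty)$, applied to $m-v=\tfrac12(u_0-v)+\tfrac12(u_1-v)$. Both arguments then close the same way, converting integrals back to distances via the lower bound in Proposition \ref{prop: Epsi_extension}(iii) and reassembling $d_\psi(u_0,v)+d_\psi(u_1,v)=d_\psi(u_0,u_1)$ by the Pythagorean identity. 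Your bookkeeping even yields the marginally sharper constant $1+2^{n+1}\le 2^{n+2}$. One small point worth making explicit (which you do): $m\in\mathcal E_\psi$ must be justified before Proposition \ref{prop: Epsi_extension} can be applied to pairs involving $m$; your argument via $m\ge v$ and Corollary \ref{cor: Emonoton} handles this cleanly.
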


\begin{proof} Using Proposition \ref{prop: Epsi_extension}(ii) and (iii) we can start writing:
\begin{flalign*}
d_\psi & \Big(u_0,\frac{u_0 + u_1}{2}\Big) = d_\psi\Big(u_0, P\Big(u_0,\frac{u_0 + u_1}{2}\Big)\Big) + d_\psi\Big(\frac{u_0 + u_1}{2},P\Big(u_0,\frac{u_0 + u_1}{2}\Big)\Big)\\
&\leq d_\psi(u_0, P(u_0,u_1)) + d_\psi\Big(\frac{u_0 + u_1}{2},P(u_0,u_1)\Big)\\
&\leq  \int_X \psi(u_0 - P(u_0,u_1)) \o^n_{P(u_0,u_1)} + \int_X\psi\Big(\frac{u_0+u_1}{2} - P(u_0,u_1)\Big)\o^n_{P(u_0,u_1)}\\
&\leq \int_X \psi(u_0 - P(u_0,u_1)) \o^n_{P(u_0,u_1)} + \int_X\psi(\max(u_1,u_0) - P(u_0,u_1))\o^n_{P(u_0,u_1)}\\
&\leq \int_X \Big((1+ \mathbbm{1}_{\{u_0 \geq u_1\}})\psi(u_0 - P(u_0,u_1)) +\mathbbm{1}_{\{u_1 \geq u_0\}}\psi(u_1 - P(u_0,u_1))\Big)\o^n_{P(u_0,u_1)}\\
& \leq 2^{n+2} \big(d_\psi(u_0, P(u_0,u_1)) + d_\psi(u_1,P(u_0,u_1)) \big)= 2^{n+2}d_\psi(u_0,u_1),
\end{flalign*}
where in the second line we have used the first claim of Lemma \ref{lem: mon_triplet_ineq}  and the fact that $P(u_0,u_1) \leq P(u_0,(u_0 + u_1)/2)$, in the third and sixth line Proposition \ref{prop: Epsi_extension}(iii), and in the last equality we have used Proposition \ref{prop: Epsi_extension}(ii).
\end{proof}

\paragraph{Declarations.} We declare no conflict of interest with any organization.

\addtocontents{toc}{\protect\enlargethispage{2\baselineskip}}
\addcontentsline{toc}{section}{References}

\footnotesize
\let\OLDthebibliography\thebibliography
\renewcommand\thebibliography[1]{
  \OLDthebibliography{#1}
  \setlength{\parskip}{1pt}
  \setlength{\itemsep}{1pt}
}

\vspace{0.3in}
\noindent {\sc Department of Mathematics, University of Maryland}\vspace{0.1cm}\\
{\tt tdarvas@umd.edu}\vspace{0.1in}
\end{document}